\newcommand{\R}{\mathbb{R}}
\newcommand{\cB}{\mathcal{B}}
\newcommand{\DD}{\mathcal{D}}
\newcommand{\cM}{\mathcal{M}}
\newcommand{\cQ}{\mathcal{Q}}
\newcommand{\cP}{\mathcal{P}}
\newcommand{\cG}{\mathcal{G}}
\newcommand{\cL}{\mathcal{L}}
\newcommand{\cK}{\mathcal{K}}
\newcommand{\HH}{\mathcal{H}}
\newcommand{\TT}{\mathbb{S}^{d-1}}
\newcommand{\T}{\mathcal{T}}
\DeclareMathOperator{\supp}{supp}
\DeclareMathOperator{\diam}{diam}
\DeclareMathOperator{\vis}{Vis}
\DeclareMathOperator{\dimh}{\dim_{\mathrm{H}}}
\newtheorem{theorem}{Theorem}
\newtheorem*{theorem*}{Theorem}
\newtheorem*{conj*}{Conjecture}
\newtheorem{lemma}[theorem]{Lemma}
\newtheorem{prop}[theorem]{Proposition}
\theoremstyle{remark}
\newtheorem*{definition*}{Definition}
\newtheorem{question}[theorem]{Question}
\newtheorem{remark}[theorem]{Remark}
\numberwithin{equation}{section}
\numberwithin{theorem}{section}
\numberwithin{figure}{section}
\begin{document}

\begin{frontmatter}[classification=text]


\author[damian]{Damian D\k{a}browski\thanks{Supported by the Research Council of Finland postdoctoral grant \textit{Quantitative rectifiability and harmonic measure beyond the Ahlfors-David-regular setting}, grant No. 347123.}}

\begin{abstract}
We show that for any compact set $E\subset\R^d$ the visible part of $E$ has Hausdorff dimension at most $d-1/6$ for almost every direction. This improves recent estimates of Orponen and Matheus.

If $E$ is $s$-Ahlfors regular, where $s>d-1$, we prove a much better estimate. In that case we have for almost every direction $\theta$
$$\dimh(\vis_\theta(E)) \le s - \alpha(s-d+1), $$
where $\alpha>0.183$ is absolute. The estimate is new even for self-similar sets satisfying the open set condition.	
Along the way, we prove a refinement of the Marstrand's slicing theorem for Ahlfors regular sets.
\end{abstract}
\end{frontmatter}

 \section{Introduction}
 \subsection{Visibility conjecture}
 Given $\theta\in\mathbb{S}^{d-1}$ let $\ell_\theta=\{t\theta\ :\ t\ge 0\}\subset\R^d$ be the closed half-line spanned by $\theta$. The \emph{visible part} of a compact set $E\subset\R^d$ in direction $\theta$ is defined as
 \begin{equation*}
 \vis_\theta(E) = \{x\in E\ :\ (x+\ell_\theta)\cap E = \{x\} \}.
 \end{equation*}
 Let $\pi_\theta:\R^d\to\theta^\perp$ be the orthogonal projection to the $(d-1)$-dimensional plane $\theta^\perp$. Since $\pi_\theta(\vis_\theta(E))=\pi_\theta(E)$, it follows from the Marstrand-Mattila projection theorem (see e.g. \cite[Theorem 5.8]{mattila2015fourier}) that for $\HH^{d-1}$-a.e. $\theta\in\mathbb{S}^{d-1}$
 \begin{equation}\label{eq:viscon}
 \dimh(\vis_\theta(E))\ge \min\{\dimh(E),\,d-1\},
 \end{equation}
 where $\dimh$ stands for the Hausdorff dimension. Observe that we also have the trivial upper bound
 \begin{equation}\label{eq:trivial}
 \dimh(\vis_\theta(E))\le \dimh(E)
 \end{equation}
 simply because $\vis_\theta(E)\subset E$. The two estimates together imply that for sets satisfying $\dimh(E)\le d-1$ the inequality \eqref{eq:viscon} is in fact an equality. The \emph{visibility conjecture} asserts that the same holds for all compact sets $E$.
 
 \begin{conj*}
 	If $E\subset\R^d$ is compact and $\dimh(E)>d-1$, then for $\HH^{d-1}$-a.e. $\theta\in\mathbb{S}^{d-1}$
 	\begin{equation}\label{eq:conj}
 	\dimh(\vis_\theta(E))= d-1.
 	\end{equation}
 \end{conj*}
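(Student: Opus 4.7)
The statement in question is the long-standing visibility conjecture, which to the best of my knowledge remains open in general; I will therefore sketch the most natural line of attack and indicate where it stalls, rather than claim a complete proof.

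The starting observation is that $\pi_\theta$ restricted to $\vis_\theta(E)$ is injective: if two distinct points $x_1,x_2\in\vis_\theta(E)$ satisfied $\pi_\theta(x_1)=\pi_\theta(x_2)$, then $x_2=x_1+t\theta$ for some $t\neq0$, and whichever of the two sits "behind" the other fails to be visible. Hence every fibre $\pi_\theta^{-1}(y)$ meets $\vis_\theta(E)$ in at most a single point, so all vertical slices of $\vis_\theta(E)$ are zero-dimensional. Together with $\pi_\theta(\vis_\theta(E))\subset\theta^\perp$, matching \eqref{eq:viscon} with \eqref{eq:conj} would follow from any dimension-additivity principle of the form $\dimh(E')\le \dimh(\pi_\theta(E'))+\sup_y\dimh(\pi_\theta^{-1}(y)\cap E')$. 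The plan would be: fix a Frostman measure $\mu$ on $\vis_\theta(E)$ of exponent $t-\varepsilon$ with $t=\dimh(\vis_\theta(E))$, push it forward by $\pi_\theta$, and show that, on average, the slices must carry positive $(t-d+1)$-dimensional content — in direct contradiction with the slices being singletons as soon as $t>d-1$.

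The main obstacle is that this additivity principle is false for general Borel sets. The classical Marstrand slicing theorem yields only the \emph{upper} bound $\dimh(\text{slice})\le \dimh(E)-(d-1)$ for a.e.\ $y$, whereas one needs a matching lower bound. Tangency of $\vis_\theta(E)$ with direction $\theta$ can make $\pi_\theta$ collapse dimension arbitrarily badly on small subsets, and ruling this out requires quantitative structural information on $E$. Here is precisely where the paper's refined slicing theorem enters: for $s$-Ahlfors regular $E$ one has scale-invariant regularity, and a multi-scale decomposition can separate the transverse scales (on which $\pi_\theta$ preserves dimension) from the tangential scales (which can be charged either against the projection theorem or against the injectivity of $\pi_\theta$ on $\vis_\theta(E)$). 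This is what powers the bound $t\le s-\alpha(s-d+1)$ with $\alpha>0.183$ announced in the abstract, yet falls short of the sharp $\alpha=1$.

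For a general compact set, the absence of scale-invariant regularity appears to be a fundamental obstruction: at a density point of $\mu$ one cannot ensure that the tangential behaviour of $\vis_\theta(E)$ at scale $r$ is comparable to that at scale $r/2$, so the multi-scale bookkeeping breaks down. I would expect that closing the gap between the estimates proved here and the full conjecture requires a genuinely new input — perhaps a Fourier-analytic or decoupling estimate sensitive to the exceptional directions in which tangency concentrates, or an intrinsic rectifiability/uniformisation statement about visible parts — rather than a more clever deployment of the slicing strategy alone. Accordingly, in my own attempt I would concentrate first on the Ahlfors regular case and try to push $\alpha$ toward $1$, treating the general conjecture as a target to approach by successive weakenings of the regularity hypothesis.
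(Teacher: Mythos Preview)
Your assessment is correct: this is the visibility conjecture, which the paper states as open and does not prove. There is no proof in the paper to compare against; the paper's contributions are the partial bounds $\dimh(\vis_\theta(E))\le s-\alpha(s-d+1)$ for $s$-Ahlfors regular sets and $\dimh(\vis_\theta(E))\le d-1/6$ in general, so your decision not to claim a complete argument is the right one.

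One minor correction to your heuristic: the obstruction is not that Marstrand gives ``only an upper bound'' on slice dimension. The real issue is that the singleton-slice property of $\vis_\theta(E)$ holds only in the single direction $\theta$, and there is no general inequality $\dimh(A)\le\dimh(\pi_\theta(A))+\sup_y\dimh(A\cap\pi_\theta^{-1}(y))$ for a fixed direction; this fails already for graphs of H\"older functions. The paper does not attempt to rescue any such additivity formula. Instead it bounds $N(T\cap\vis_\theta(E),\delta)$ tube by tube via the good/bad/heavy/light decomposition, with the Sobolev regularity of projections controlling the bad part and, in the Ahlfors regular case, the refined slicing estimate controlling the heavy part. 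The visibility structure enters only in the good-part estimate, where it rules out visible points lying $\theta$-below a substantially intersected $\Delta$-cube.
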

 To the best of our knowledge the conjecture first appeared in \cite{jarvenpaa2003visible}; another early mention is \cite[Problem 11]{mattila2004hausdorff}.
 
 The visibility conjecture has been confirmed for several special classes of sets. It holds for quasicircles, graphs of continuous functions \cite{jarvenpaa2003visible}, fractal percolation \cite{arhosalo2012visible}, and for self-similar and self-affine sets satisfying various additional hypotheses \cite{jarvenpaa2003visible,falconer2013visible,rossi2021visible,jarvenpaa2022dimensions}.
 
 Some progress has also been made towards proving \eqref{eq:conj} for general compact sets. A special case of Theorem 1.1 from \cite{jarvenpaa2004transversal} gives that if $E\subset\R^d$ satisfies $0<\HH^s(E)<\infty$, then $\HH^s(\vis_\theta(E))=0$ for a.e. $\theta\in\mathbb{S}^{d-1}$. In a recent breakthrough, Orponen showed that for any compact $E\subset\R^d$ we have for a.e. $\theta\in\mathbb{S}^{d-1}$ 
 \begin{equation}\label{eq:orpo}
 \dimh(\vis_\theta(E))\le d-\frac{1}{50d},
 \end{equation}
 so that at least for sets with dimension close enough to $d$ we can beat the trivial bound \eqref{eq:trivial}. Orponen's proof was optimised by Matheus \cite{matheus2021some}, but due to typos in equations \cite[(2.9) and (3.17)]{matheus2021some} the estimates obtained are weaker than stated.
 
 Quite amazingly, in general it is still not known whether for sets satisfying $\dimh(E)>d-1$ we have $\dimh(\vis_\theta(E))< \dimh(E)$ for a.e. $\theta\in\mathbb{S}^{d-1}$. Until now, this was open even for self-similar sets satisfying the open set condition. If we assume additionally that the rotation group of the associated IFS is finite, and that the projection $\pi_\theta(E)$ has non-empty interior, then this was shown in \cite[Theorem 2.11]{jarvenpaa2022dimensions}. Some estimates for self-similar sets were also shown in \cite{matheus2021some}, but they were only valid if $\dimh(E)$ was large enough, and due to the typo in \cite[(3.17)]{matheus2021some} it is not clear to us what  the estimate obtained for $\dimh(\vis_\theta(E))$ is. 
 
 \vspace{1em}
 In the first result of this article we improve on the trivial bound \eqref{eq:trivial} for all Ahlfors regular sets with $\dimh(E)>d-1$. Recall that a compact set $E\subset\R^d$ is $s$-Ahlfors regular, where $0<s\le d$, if there exists a constant $C\ge 1$ such that
 \begin{equation*}
 C^{-1}r^s\le \HH^s(E\cap B(x,r))\le Cr^s\quad\text{for all $x\in E,\, r>0$.}
 \end{equation*}
 
 \begin{theorem}\label{thm:main}
 	Let $d-1<s\le d$. Suppose that $E\subset \R^d$ is a compact $s$-Ahlfors regular set. Then, for a.e. $\theta\in\TT$
 	\begin{equation}\label{eq:dimest}
 	\dimh (\vis_\theta(E)) \le s - \alpha(s-d+1) = (1-\alpha)s + \alpha(d-1),
 	\end{equation}
 	where $\alpha = 1-\sqrt{6}/3 >0.1835.$
 \end{theorem}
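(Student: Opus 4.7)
The plan is to bound the covering number $N(\vis_\theta(E),\delta)$ at scale $\delta$ and convert this into a Hausdorff dimension bound via a standard box-counting/Frostman reduction. Since each non-empty vertical fiber $\pi_\theta^{-1}(y)\cap E$ has a unique topmost point, $\vis_\theta(E)$ is the graph of the function $f(y)=\max\{t:y+t\theta\in E\}$ defined on $\pi_\theta(E)\subseteq\theta^{\perp}$. Taking $\theta=e_d$ without loss of generality, I would tile $\theta^{\perp}$ by disjoint $\delta$-cubes $Q$ and observe that $\{(y,f(y)):y\in Q\}\subseteq E\cap T_Q$, where $T_Q$ is the vertical $\delta$-tube over $Q$. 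Hence $\vis_\theta(E)$ can be covered cube by cube using at most $N_Q:=N(E\cap T_Q,\delta)$ many $\delta$-balls, so $N(\vis_\theta(E),\delta)\le\sum_Q N_Q$.

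The key input is the refined Marstrand slicing theorem for Ahlfors regular sets (proved earlier in the paper): for a.e.\ $\theta\in\TT$ and every threshold $t>0$, the number of ``thick'' cubes $Q$ with $N_Q\ge\delta^{-t}$ is at most $\delta^{-(s-t)}$ up to subpolynomial factors. Fixing such a $t$, the cubes with $N_Q\le\delta^{-t}$ (``thin'' cubes) contribute at most $\delta^{-(d-1)}\cdot\delta^{-t}=\delta^{-(d-1+t)}$ to $\sum_Q N_Q$. For the thick cubes, the naive estimate $\delta^{-(s-t)}\cdot\delta^{-1}$ obtained from the trivial tube bound $N_Q\le\delta^{-1}$ is far too weak; the improvement must come from \emph{visibility}. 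Above each visible point $(y,f(y))$ the upward ray in direction $\theta$ avoids $E$ entirely, so only the upper envelope of $E\cap T_Q$ belongs to $\vis_\theta(E)\cap T_Q$, and this envelope should be substantially thinner than $E\cap T_Q$ itself.

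Quantifying this saving requires a further dichotomy inside each thick tube $T_Q$, applied at smaller scales: after iterating the thin/thick decomposition inside $T_Q$ and using the Ahlfors regular lower bound on $E$, the number of $\delta$-balls needed to cover $\vis_\theta(E)\cap T_Q$ in a thick cube should drop to at most $\delta^{-(1-\alpha)(s-d+1)}$, for a parameter $\alpha$ to be determined. Choosing $t=(1-\alpha)(s-d+1)$ equalises the two contributions:
\[
N(\vis_\theta(E),\delta)\lesssim\delta^{-((d-1)+(1-\alpha)(s-d+1))}=\delta^{-(s-\alpha(s-d+1))},
\]
and the self-consistency of the visibility saving (the saving on thick tubes must itself be compatible with the dichotomy at all scales) produces a quadratic relation of the form $(1-\alpha)^2=2/3$, whose solution is $\alpha=1-\sqrt{6}/3$.

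The main obstacle is precisely this second dichotomy: while the refined slicing theorem controls the \emph{number} of thick tubes, it says nothing about the \emph{vertical spread} of $\vis_\theta(E)$ inside such a tube. Turning the pointwise visibility condition into a quantitative $\delta$-scale bound on the upper envelope — simultaneously across all thick tubes and all scales — is the delicate core of the argument; it is here, through balancing an iterated estimate against itself and combining it with the Ahlfors regular lower bound plus a pigeonhole over heights, that the quadratic defining $\alpha$ emerges.
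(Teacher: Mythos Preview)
Your outline has the right target and even lands on the correct quadratic, but the crucial middle step---how visibility actually produces a saving inside a thick tube---is missing, and the paper's mechanism is quite different from the vague ``iterate the thin/thick decomposition'' you propose.

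First, the bound you attribute to the refined slicing theorem, that the number of $\delta$-tubes with $N_Q\ge\delta^{-t}$ is at most $\delta^{-(s-t)}$, is just mass counting from Ahlfors regularity (each such tube carries $\mu$-mass $\gtrsim\delta^{s-t}$); it uses nothing about slicing. The refined slicing result (Proposition~\ref{prop:heavypart}) says something much sharper: the set $E_H$ of points lying in heavy tubes has small $\mathcal{H}^{d-1+2\varepsilon}_\infty$-content. In the paper this is used to discard the heavy part of $E$ at dimension $s-\tau$, not to count tubes.

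Second, and more importantly, the paper does \emph{not} iterate inside thick tubes. It introduces a single intermediate scale $\Delta\approx\delta^{\kappa}$ and runs the following geometric argument. For a $\delta$-tube $T$ that is neither light nor heavy, pigeonholing at scale $\Delta$ gives a $\Delta$-cube $Q_T$ that $T$ ``substantially intersects''. If a line $\ell\subset T$ actually meets $E\cap\overline{3Q_T}$, then any visible point on $\ell$ must lie $\theta$-above $Q_T$; this localises $\vis_\theta(E)\cap E_G\cap T$ to the few $\Delta$-cubes at or above $Q_T$, each contributing a controlled number of $\delta$-boxes (Lemma~\ref{lem:coveringgood}). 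The exceptional lines---those contained in a substantially-intersecting tube but missing $E\cap\overline{3Q}$---form the ``bad'' set $L_B$, and the heart of the proof is Lemma~\ref{lem:contentprojection}: a Fourier/Sobolev argument (Plancherel plus the energy bound for the projected measure $\mu_{Q,\theta}$) showing $\mathcal{H}^{d-1-\tau}_\infty(\pi_\theta(L_{Q,B}))$ is small. This bad-lines estimate is what converts the qualitative visibility condition into a quantitative bound, and nothing in your proposal supplies it. The value of $\alpha$ then comes from balancing the constraint $2\kappa+3\alpha\le 1$ (from the bad-part estimate) against $\kappa=\alpha/(1-\alpha)$ (from the good-part estimate), yielding $3\alpha^2-6\alpha+1=0$; your equation $(1-\alpha)^2=2/3$ is algebraically equivalent, but you have not exhibited the two competing constraints that produce it.
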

 In particular, the result holds for self-similar sets satisfying weak separation condition, see \cite[Theorem 2.1]{fraser2015assouad}. Note that proving \eqref{eq:dimest} with $\alpha=1$ would establish the visibility conjecture for Ahlfors-regular sets.
 
 In \cite[Remark 1.3]{orponen2022visible} Orponen remarked that while the constant $1/50$ in \eqref{eq:orpo} could be improved by optimising the argument (this was done in \cite{matheus2021some}), ``it seems likely that more ideas will be needed to get an upper bound of the form $n-c$ for some absolute $c>0$.'' In our second main result we obtain such a bound.
 \begin{theorem}\label{thm:main2}
 	Suppose that $E\subset \R^d$ is compact. Then, for a.e. $\theta\in\TT$
 	\begin{equation}\label{eq:dimest2}
 	\dimh (\vis_\theta(E)) \le d-\frac{1}{6}.
 	\end{equation}
 \end{theorem}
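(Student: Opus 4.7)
I would derive Theorem~\ref{thm:main2} from Theorem~\ref{thm:main} by extracting an essentially Ahlfors-regular subset of $E$. The crucial numerical observation is that for every $s\in[d-1,d]$,
\[
s-\alpha(s-d+1)=(1-\alpha)s+\alpha(d-1)\le d-\alpha<d-\tfrac{1}{6},
\]
because $\alpha=1-\sqrt{6}/3>1/6$. Consequently the estimate in Theorem~\ref{thm:main} applied to any $s$-Ahlfors regular set with $s\in(d-1,d]$ already yields $\dimh(\vis_\theta(\cdot))\le d-1/6$, while the case $\dimh E\le d-1$ is covered by the trivial bound $\dimh(\vis_\theta E)\le \dimh E$. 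The task is therefore to reduce the general compact case to the Ahlfors-regular one.

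For the reduction I would proceed as follows. If $\dimh E\le d-1/6$ the theorem is trivial, so assume $\dimh E>d-1/6$ and fix $s\in(d-1/6,\dimh E)$. Using Frostman's lemma, obtain a Borel probability measure $\mu$ on $E$ satisfying $\mu(B(x,r))\lesssim r^{s}$. Via a stopping-time/pigeonhole argument on dyadic scales (in the spirit of Howroyd's construction), pass to a compact $F\subset\supp\mu$ of positive $\mu$-measure on which $\mu$ satisfies a form of lower regularity on a ``generic'' family of scales in addition to the given upper regularity. Inspecting the proof of Theorem~\ref{thm:main}, one expects only this weaker form of regularity to be genuinely needed, so a mild variant of Theorem~\ref{thm:main} applies to $F$ and yields $\dimh(\vis_\theta F)\le d-1/6$ for a.e.~$\theta$. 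Because $F\subset E$, any $x\in\vis_\theta E\cap F$ satisfies $(x+\ell_\theta)\cap F\subset(x+\ell_\theta)\cap E=\{x\}$, so $\vis_\theta E\cap F\subset\vis_\theta F$ and the bound propagates. Exhausting $E$ by countably many such pieces $F_n$ (each obtained from a Frostman measure, with $\mu(E\setminus\bigcup_n F_n)$ arbitrarily small and $s\uparrow\dimh E$) finally yields $\dimh(\vis_\theta E)\le d-1/6$ for a.e.~$\theta$.

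The main technical obstacle is the extraction step: honest $s$-Ahlfors regular subsets of a general compact $E$ need not exist, so one must either prove a Frostman-measure version of Theorem~\ref{thm:main} (replacing two-sided Ahlfors regularity by upper regularity plus lower regularity on a positive-density set of scales) or insert an additional multi-scale regularisation. The slack $\alpha-1/6\approx 0.017$ is precisely what buys the robustness needed for this approximation, and it explains why the final constant in Theorem~\ref{thm:main2} is slightly worse than the Ahlfors regular constant $\alpha$ obtained in Theorem~\ref{thm:main}.
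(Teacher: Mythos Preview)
Your proposal has a genuine gap in the exhaustion step. Even granting a Frostman-measure variant of Theorem~\ref{thm:main} applicable to some $F\subset E$, the inclusion $\vis_\theta(E)\cap F\subset\vis_\theta(F)$ only controls $\vis_\theta(E)\cap F$; the remainder $\vis_\theta(E)\setminus\bigcup_n F_n$ is bounded in dimension only by $\dimh\big(E\setminus\bigcup_n F_n\big)$, and having $\mu\big(E\setminus\bigcup_n F_n\big)$ small for a fixed Frostman measure $\mu$ says nothing about this Hausdorff dimension --- a $\mu$-null set can easily have dimension $\dimh E$. There is no evident mechanism by which iterating the extraction forces the residual set below dimension $d-1/6$.

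More fundamentally, once you concede that honest Ahlfors-regular subsets need not exist and propose instead to ``prove a Frostman-measure version of Theorem~\ref{thm:main}'', you have not reduced anything: that Frostman version \emph{is} Theorem~\ref{thm:main2}, and applying it directly to $E$ (rather than to pieces $F_n$) would already give the conclusion without any exhaustion. This is precisely what the paper does. It reruns the light/bad/good decomposition on $E$ itself, using the Frostman measure of Lemma~\ref{lem:Frostman} with exponent $d-\tau$. Crucially, the heavy-tube machinery --- and with it Proposition~\ref{prop:heavypart}, whose proof genuinely requires lower Ahlfors regularity --- is dropped entirely; without it the bad-part estimate degrades to the crude bound $\HH^{d-\tau}_\infty(E_B)\lesssim\HH^{d-1-\tau}_\infty(\pi_\theta(E_B))$, and the resulting optimization in $\kappa,\tau$ gives $\tau=\kappa=1/6$. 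So the constant $1/6$ is not ``$\alpha$ minus slack'' coming from an approximation step as you suggest; it arises from a separate, simpler optimization that trades away the slicing input in exchange for dispensing with lower regularity.
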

 This improves the best known upper bound for $\dimh (\vis_\theta(E))$ for general compact sets.
 
 \subsection{Slicing Ahlfors regular sets}
 When proving \thmref{thm:main} we needed an upper bound for the dimension of \emph{line slices} of Ahlfors regular sets, that is, sets of the form $F\cap \ell$, where $\ell$ is a line. We prove a more general result, valid for slices of arbitrary dimension.
 
 Let $1\le n\le d-1$ be an integer. The celebrated Marstrand's slicing theorem \cite{marstrand1954some,mattila1975hausdorff} states that for a set $F\subset\R^d$ with $0<\HH^s(F)<\infty$ and $n<s\le d$ we have for $\gamma_{d,n}$-a.e. $L\in\mathbb\cG(d,n)$
 \begin{equation}\label{eq:marstrand}
 \dimh(F\cap V_{x,L})=s-n\quad \text{for $\HH^s$-a.e. $x\in F$}.
 \end{equation}
 Here $\cG(d,n)$ denotes the Grassmannian of $n$-dimensional planes in $\R^d$, $\gamma_{d,n}$ is the Haar measure on $\cG(d,n)$, and $V_{x,L}\coloneqq x+L^{\perp}$. See \cite[Theorem 10.11]{mattila1999geometry} or \cite[Theorem 6.9]{mattila2015fourier} for textbook references.
 
 Refinements and alternative proofs of Marstrand's slicing theorem have been found in \cite{mattila1981integralgeometric,orponen2014slicing,mattila2016hausdorff}, see also \cite[Chapter 6]{mattila2015fourier} and recent surveys \cite{mattila2021haudorffdim,mattila2023surveyinter}. The properties of slices of certain special classes of fractals have also been extensively studied, see e.g. \cite{wu2019furstenberg,shmerkin2019furstenberg,algom2020slicing,barany2021finer,anttila2023slices}.
 
 In \cite[p. 97]{mattila2015fourier} Mattila asked whether it is possible to get a dimension upper bound on the set
 \begin{equation*}
 F_{u,L}\coloneqq\{x\in F\ : \ \dimh(F\cap V_{x,L})>u\},
 \end{equation*}
 where $0<\HH^s(F)<\infty$, $n<s\le d$, and $s-n\le u< d-n$. Note that \eqref{eq:marstrand} gives only $\HH^s(F_{u,L})=0$ for a.e. $L\in\cG(d,n)$. We prove such estimate for the endpoint $u=s-n$ in the case that $F$ is Ahlfors regular. In this case, we are also able with no extra effort to replace in the definition of $F_{u,L}$ the Hausdorff dimension by the upper box-counting dimension.
 
 
 \begin{theorem}\label{thm:main3}
 	Let $n<s\le d$. Suppose that $F\subset \R^d$ is a compact $s$-Ahlfors regular set. Then, for $\gamma_{d,n}$-a.e. $L\in \cG(d,n)$
 	\begin{equation}\label{eq:slicing}
 	\dimh(\{x\in F\ : \ \overline{\dim}_{\mathrm{box}}(F\cap V_{x,L})> s-n\})\le n.
 	\end{equation}
 \end{theorem}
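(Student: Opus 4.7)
The plan is to convert the slicing statement into a density statement for the projected measure $\nu_L := P_{L*}(\HH^s|_F)$, where $P_L:\R^d\to L$ denotes orthogonal projection. Writing $\mu := \HH^s|_F$, the $s$-AD-regularity of $F$ yields
\[
N_r(F\cap V_{x,L}) \;\asymp\; r^{-s}\,\nu_L\bigl(B^L(P_L x,\,r)\bigr)
\]
for every $x\in F$ and $r>0$, where $N_r$ is the $r$-covering number. Consequently $\overline{\dim}_{\mathrm{box}}(F\cap V_{x,L})>s-n$ if and only if $\nu_L(B^L(P_L x,r_k))\gtrsim r_k^{\beta}$ for some $\beta<n$ and some $r_k\downarrow 0$. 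A countable union over rational $\beta\in(0,n)$ reduces the theorem to showing, for each such $\beta$,
\[
\dimh(E_\beta^L)\le n,\qquad E_\beta^L := \bigl\{x\in F : \limsup_{r\to 0}\nu_L(B^L(P_L x,r))/r^\beta>0\bigr\},
\]
for $\gamma_{d,n}$-a.e.\ $L$.

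Since $\mu$ is $s$-AD-regular with $s>n$, the Riesz energies $I_t(\mu)$ are finite for every $t<s$. The standard projection identity then implies that, for $\gamma$-a.e.\ $L$, $\nu_L$ is absolutely continuous with density $f_L$ lying in the fractional Sobolev space $H^\sigma(L)$ for every $\sigma<(s-n)/2$. By Adams--Hedberg Sobolev-capacity theory, the set of non-Lebesgue points of $f_L$ has Hausdorff dimension at most $n-2\sigma$. Because
\[
S_\beta^L := \bigl\{y\in L : \limsup_{r\to 0}\nu_L(B^L(y,r))/r^\beta>0\bigr\}\;\subset\;\{f_L=\infty\}
\]
whenever $\beta<n$, letting $\sigma\uparrow (s-n)/2$ gives $\dimh(S_\beta^L)\le 2n-s$ for $\gamma$-a.e.\ $L$.

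With $E_\beta^L=F\cap P_L^{-1}(S_\beta^L)$ and a dimension bound on $S_\beta^L$ in hand, the final step is a two-scale covering argument. At each dyadic scale $r_k=2^{-k}$ one covers $S_\beta^L$ by $\lesssim r_k^{-(2n-s)}$ balls of radius $r_k$; lifts each ball to a slab $P_L^{-1}(B^L(y_j,r_k))$ in $\R^d$; and uses the $s$-AD-regularity of $F$ together with the upper $n$-regularity $\nu_L(B^L(y,r))\lesssim r^n$ to cover each slab's intersection with $F$ by $\lesssim r_k^{n-s}$ balls of radius $r_k$. Summing produces $\lesssim r_k^{-n}$ balls, and running this at every scale together with a Hausdorff-content / Borel--Cantelli argument yields $\HH^{n+\varepsilon}(E_\beta^L)=0$ for every $\varepsilon>0$ and $\gamma$-a.e.\ $L$, whence $\dimh(E_\beta^L)\le n$.

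The main obstacle is securing the uniform upper $n$-regularity bound $\nu_L(B^L(y,r))\lesssim r^n$ used in the lifting step, valid simultaneously in $y$ and $r$ for $\gamma$-a.e.\ $L$. Sobolev embedding makes this immediate when $s>2n$, in which case $f_L\in L^\infty(L)$; but in the intermediate range $n<s\le 2n$ the density $f_L$ need not be bounded, and a naive covering produces only $\dimh(E_\beta^L)\le \beta+(s-n)$, which exceeds $n$ once $\beta>2n-s$. Bridging this gap will require either a multi-scale refinement that exploits the full $H^\sigma$-regularity of $f_L$ to control the slab covering numbers more efficiently, or a decomposition of slabs according to the magnitude of $f_L$ along them combined with a Fubini-type averaging over $\cG(d,n)$.
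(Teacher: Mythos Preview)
Your reduction to the projected measure $\nu_L$ and its Sobolev regularity is the right setup, and the bound $\dimh(S_\beta^L)\le 2n-s$ is correct. But, as you yourself concede, the proof is incomplete in the range $n<s\le 2n$: the lifting step hinges on a uniform upper $n$-regularity bound $\nu_L(B^L(y,r))\lesssim r^n$ that is simply unavailable there. (A secondary issue: covering $S_\beta^L$ by $\lesssim r_k^{-(2n-s)}$ balls at a \emph{single} scale $r_k$ presumes a box-dimension bound, whereas you have only established a Hausdorff bound; any honest argument must use a content covering with balls of varying radii.)

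The missing idea, which the paper supplies, is a scale-dependent substitute for the global upper regularity. Work with the superlevel set $H'=\{y\in L:\mathcal{M}\nu_L(y)\ge M\}$ for a large parameter $M$; your $S_\beta^L$ lies in $\bigcap_M H'$. The Sobolev trace inequality (equivalently, the capacity theory you invoke) gives $\mathcal{H}^{2n-s+2\varepsilon}_\infty(H')\lesssim_\varepsilon M^{-2}\|\nu_L\|_{H^\sigma}^2$. Now take a near-optimal dyadic cover $\mathcal{Q}$ of $H'$ and \emph{enlarge each cube to its smallest ancestor $P$ not contained in $H'$}; one checks that the resulting family $\mathcal{P}$ still satisfies $\sum_{P\in\mathcal{P}}\ell(P)^{2n-s+2\varepsilon}\lesssim M^{-2}\|\nu_L\|_{H^\sigma}^2$. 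The payoff of the enlargement is that each $P\in\mathcal{P}$ contains a point where $\mathcal{M}\nu_L<M$, so $\nu_L(CP)\lesssim M\,\ell(P)^n$ \emph{at that cube's own scale}. Lifting the slab $P_L^{-1}(P)$ and using lower $s$-regularity of $F$ then covers $F\cap P_L^{-1}(P)$ by $\lesssim M\,\ell(P)^{n-s}$ balls of radius $\ell(P)$, whence
\[
\mathcal{H}^{n+2\varepsilon}_\infty\bigl(F\cap P_L^{-1}(H')\bigr)\ \lesssim\ \sum_{P\in\mathcal{P}} M\,\ell(P)^{n-s}\cdot\ell(P)^{n+2\varepsilon}\ =\ M\sum_{P\in\mathcal{P}}\ell(P)^{2n-s+2\varepsilon}\ \lesssim\ M^{-1}\|\nu_L\|_{H^\sigma}^2.
\]
Sending $M\to\infty$ and then $\varepsilon\to 0$ completes the argument. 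This is precisely the ``multi-scale refinement'' you were seeking; the key is not averaging over $\mathcal{G}(d,n)$ but engineering the cover so that the density is controlled on each covering cube individually.
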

 For a more quantitative version of \thmref{thm:main3}, see \propref{prop:heavypart}.
 
 \begin{remark}
 	In fact, we prove that $\eqref{eq:slicing}$ holds for {every} $L\in\cG(d,n)$ such that the push-forward of $\HH^s|_F$ under the orthogonal projection $\pi_L$ belongs to the Sobolev space $H^\sigma(\R^n)$ for all $0<\sigma<(s-n)/2$. This is known to be true for $\gamma_{d,n}$-a.e. $L\in\cG(d,n)$ (see \lemref{lem:sobolevest}), and for a general set $F\subset\R^d$ with $0<\HH^s(F)<\infty$ this is likely to be sharp, in the sense that the set of exceptional planes $\mathcal{E}\subset\cG(d,n)$ where the Sobolev bound fails may satisfy $\dimh(\mathcal{E})=\dimh(\cG(d,n))$. However, it seems plausible that for Ahlfors regular sets $F$ the set of exceptional planes $\mathcal{E}$ is much smaller. Hence, proving any non-trivial upper bounds on $\dimh(\mathcal{E})$ for Ahlfors regular sets would immediately give an estimate on the dimension of exceptional planes where $\eqref{eq:slicing}$ fails.
 \end{remark}
 \begin{remark}
 	The estimate \eqref{eq:slicing} is only interesting if $s\le 2n$, because for $s>2n$ it is easy to show that for $\gamma_{d,n}$-a.e. $L\in \cG(d,n)$
 	\begin{equation}\label{eq:slicing2}
 	\{x\in F\ : \ \overline{\dim}_{\mathrm{box}}(F\cap V_{x,L})> s-n\}=\varnothing,
 	\end{equation}
 	see the argument above \eqref{eq:17}. \eqref{eq:slicing2} also holds trivially for $s=d$, simply because $\dimh(V_{x,L})=d-n$.
 \end{remark}
 
 \begin{question}
 	It is not clear to us how sharp \eqref{eq:slicing} is. Comparing \eqref{eq:slicing} and \eqref{eq:slicing2}, it is tempting to ask whether for $n<s\le \min(2n,d)$ and sets with $0<\HH^s(F)<\infty$ we have
 	\begin{equation*}
 	\dimh(\{x\in F\ : \ \dimh(F\cap V_{x,L})> s-n\})\le 2n-s
 	\end{equation*}
 	for a.e. $L\in\cG(d,n)$. 
 	
 	Observe that if this estimate was true, then for $s>3n/2$ it would automatically improve to $\{x\in F\ : \ \dimh(F\cap V_{x,L})> s-n\}=\varnothing$ for a.e. $L\in\cG(d,n)$, simply because $s-n>2n-s$ in this regime.
 \end{question}
 %

 
 \subsection{About the proofs}
 The proofs of \thmref{thm:main} and \thmref{thm:main2} follow the general strategy invented in \cite{orponen2022visible}, which we briefly describe below. 
 
 Given a compact set $E\subset [0,1]^d$ and a fixed direction $\theta\in\mathbb{S}^{d-1}$, we seek an estimate for $\HH^{d-\tau}_\infty(\vis_\theta(E))$ for some $\tau>0$. Consider the lines $\cL_\theta$ parallel to $\theta$ that cover $[0,1]^d$. The lines are divided into two classes, the good lines and the bad lines. Loosely speaking, a line $\ell\in\cL_\theta$ is good if the slice $E\cap \ell$ is quite similar to the $\delta$-slice $E\cap \ell(\delta)$, where $\ell(\delta)$ is the $\delta$-neighbourhood of $\ell$. Otherwise, $\ell$ is bad. Let $L_B$ be the union of bad lines, and $L_G$ the union of good lines. We estimate $\HH^{d-\tau}_\infty(\vis_\theta(E)\cap L_B)$ and $\HH^{d-\tau}_\infty(\vis_\theta(E)\cap L_G)$ separately. 
 
 We have a poor understanding of how $\vis_\theta(E)\cap L_B$ looks, but luckily we can show that there are very few bad lines, in the sense that $\pi_\theta(L_B)$ has small dimension. This step uses crucially the Sobolev estimate for the projections of high-dimensional Frostman measures, see \lemref{lem:sobolevest}. Then, we can estimate crudely
 \begin{equation}\label{eq:bad}
 \HH^{d-\tau}_\infty(\vis_\theta(E)\cap L_B)\le \HH^{d-\tau-1}_\infty(\pi_\theta(L_B))\lesssim \delta^\varepsilon.
 \end{equation}
 On the other hand, the good part $\vis_\theta(E)\cap L_G$ has a very nice structure, so that for each $\delta$-tube $T$ parallel to $\theta$ we have
 \begin{equation}\label{eq:good}
 N(T\cap\vis_\theta(E)\cap L_G,\delta)\lesssim\delta^{\tau-1+\varepsilon},
 \end{equation}
 where $N(\cdot,\delta)$ stands for $\delta$-covering number, and then
 \begin{equation*}
 \HH^{d-\tau}_\infty(\vis_\theta(E)\cap L_G)\lesssim \sum_{T} N(T\cap\vis_\theta(E)\cap L_G,\delta)\cdot \delta^{d-\tau}\lesssim \delta^{1-d}\cdot \delta^{\tau-1+\varepsilon}\cdot \delta^{d-\tau}=\delta^\varepsilon.
 \end{equation*}
 
 This is the outline of the proof in \cite{orponen2022visible}. In this paper we make two improvements. Firstly, in \cite{orponen2022visible} one begins by discarding exceptional directions where the Sobolev norm of projections is large (this is important for the estimate of $\HH^{d-\tau-1}_\infty(\pi_\theta(L_B))$). In the end, this forces $\tau\lesssim 1/d$. In the current paper, we do not discard exceptional directions. Instead, all our estimates depend on the Sobolev norm of the projections, and only at the end we average over all the directions. This removes some annoying terms from the estimates, and allows us to obtain \thmref{thm:main2}, where $\tau$ is independent of $d$. 
 
 The second improvement consists of using the slicing estimate from \thmref{thm:main3} (or rather \propref{prop:heavypart}) in the case $n=d-1$, which corresponds to slicing with lines. In \cite{orponen2022visible} it is hard to do better than \eqref{eq:bad} or \eqref{eq:good} because a priori each slice might be of dimension 1. Using \thmref{thm:main3}, we know that for an $s$-Ahlfors regular set $E$ most slices will have dimension at most $s-d+1$, which allows us to significantly improve \eqref{eq:bad} and \eqref{eq:good}. This is how we get \thmref{thm:main}. 
 
 Finally, the proof of \thmref{thm:main3} relies on the Sobolev regularity of projections (\lemref{lem:sobolevest}) and the trace formulas for Sobolev functions (\thmref{thm:traces}). It involves proving a weak-type estimate for the Hardy-Littlewood maximal function which is roughly of the form $\cM:H^{\sigma}(\R^n)\to L^{2,\infty}(\HH^{n-2\sigma}_\infty)$ for $0<\sigma<n/2$, see \lemref{lem:heavyaux}.
 
 \section{Preliminaries}
 \subsection{Notation}
 Inequalities of the form $f\le Cg$, where $C$ is some constant, will be abbreviated as $f\lesssim g$. If the constant $C$ depends on an additional parameter $\varepsilon$, we will write $f\lesssim_{\varepsilon} g$. We will ignore dependence on dimension $d$ and on Ahlfors-regularity constants. If we have $g\lesssim f\lesssim g$, we will write $f\sim g$.
 
 Given $x\in\R^d, r>0$ we will denote the open ball centered at $x$ of radius $r$ by $B(x,r)$. If $x\in\R^{d-1}$, then $B(x,r)$ will stand for the $(d-1)$-dimensional open ball in $\R^{d-1}$. If $B$ is a ball, then $r(B)$ will denote its radius.
 
 If $A\subset\R^d$ and $\delta>0$, then $N(A,\delta)$ denotes the $\delta$-covering number of $A$. 
 
 Given an affine line $\ell$ and $\delta>0$, $\ell(\delta)$ will denote the $\delta$-neighbourhood of $\ell$.
 
 Let $\mathbb{D}$ be the family of standard dyadic cubes on $\R^d$. Given a dyadic cube $Q$ we will denote its sidelength by $\ell(Q)$.
 
 $\cG(d,n)$ stands for the Grassmannian of $n$-dimensional planes in $\R^d$. Given $L\in\cG(d,n)$, $\pi_L:\R^d\to L\simeq \R^n$ denotes the orthogonal projection to $L$. Given $\theta\in\mathbb{S}^{d-1}$, we set $\pi_\theta\coloneqq\pi_{\theta^\perp}$, so that $\pi_\theta:\R^d\to\R^{d-1}$.
 \subsection{Sobolev spaces}	
 For $\sigma\in\R$ the fractional Sobolev space $H^\sigma(\R^d)\subset \mathcal{S}'(\R^d)$ is the completion of the set of Schwartz functions $f\in\mathcal{S}(\R^d)$ such that
 \begin{equation*}
 \|f\|_{H^\sigma(\R^d)}^2\coloneqq \int |\widehat{f}(\xi)|^2(1+|\xi|^2)^{\sigma}\ d\xi<\infty.
 \end{equation*}
 The quantity above is the inhomogeneous Sobolev norm. Its homogeneous counterpart is
 \begin{equation*}
 \|f\|_{\dot{H}^\sigma(\R^d)}^2\coloneqq \int |\widehat{f}(\xi)|^2|\xi|^{2\sigma}\ d\xi.
 \end{equation*}
 
 Observe that if $\nu$ is a finite Borel measure on $\R^d$, and $\sigma\ge 0$, then we can estimate the inhomogeneous norm using the homogeneous one:
 \begin{multline}\label{eq:7}
 \|\nu\|_{H^\sigma(\R^d)}^2 =  \int |\widehat{\nu}(\xi)|^2(1+|\xi|^2)^{\sigma}\ d\xi\\ \lesssim \int_{B(0,1)} |\widehat{\nu}(\xi)|^2\ d\xi +  \int_{B(0,1)^c} |\widehat{\nu}(\xi)|^2|\xi|^{2\sigma}\ d\xi\\
 \lesssim \|\widehat{\nu}(\xi)\|_{L^\infty}^2 + \|\nu\|_{\dot{H}^\sigma(\R^d)}^2 \le \nu(\R^d)^2 + \|\nu\|_{\dot{H}^\sigma(\R^d)}^2.
 \end{multline}
 
 Another tool we will use in our proof is the theory of traces of Sobolev functions with respect to fractal measures. Results of this type can be found e.g. in the classical monographs \cite[Chapter 11]{mazya2011sobolev}, \cite[Chapter 7]{adams1996function}, or \cite[Chapter 7]{triebel2006theory}. The statement we found particularly convenient was Theorem 7.16 from \cite{triebel2006theory}, and in our setting it gives the following.
 \begin{theorem}\label{thm:traces}
 	Let $0<t<d$, $\varepsilon>0$, and $\sigma\ge(d-t+\varepsilon)/2$. Suppose that $f\in \mathcal{S}(\mathbb{R}^d)$, and that $\eta$ is a Radon probability measure on $\R^d$ satisfying $\eta(B(x,r))\le C_0\, r^t$. Then, 
 	\begin{equation*}
 	\|f\|_{L^2(\eta)}\lesssim_\varepsilon (C_0)^{1/2}\, \|f\|_{H^\sigma(\R^d)}.
 	\end{equation*}
 \end{theorem}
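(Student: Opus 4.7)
The plan is to reduce the trace inequality to a Sobolev estimate on the weighted measure $g\,\eta$ via $L^2(\eta)$ duality, and then to recognize the resulting negative-order Sobolev norm as a Riesz energy that is tamed by the Frostman hypothesis. Without loss of generality I take $\sigma=(d-t+\varepsilon)/2$, since $\|f\|_{H^{\sigma'}(\R^d)}\le \|f\|_{H^{\sigma}(\R^d)}$ whenever $\sigma'\le \sigma$; by replacing $\varepsilon$ by $\min\{\varepsilon,t/2\}$ if necessary I may also ensure $\sigma\in(0,d/2)$, a restriction that will let me invoke the classical Fourier representation of the Riesz energy.

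For the duality step, I test $f$ against $g\in L^2(\eta)$: by the $H^\sigma$--$H^{-\sigma}$ Parseval pairing,
$$\|f\|_{L^2(\eta)}=\sup_{\|g\|_{L^2(\eta)}\le 1}\Big|\int fg\,d\eta\Big|\le \|f\|_{H^\sigma(\R^d)}\,\sup_{\|g\|_{L^2(\eta)}\le 1}\|g\,\eta\|_{H^{-\sigma}(\R^d)},$$
so it suffices to prove $\|g\,\eta\|_{H^{-\sigma}(\R^d)}\lesssim_\varepsilon C_0^{1/2}\|g\|_{L^2(\eta)}$. Splitting as in \eqref{eq:7}, the low-frequency piece of this norm is controlled by $\|\widehat{g\,\eta}\|_{L^\infty}\le \|g\|_{L^1(\eta)}\le \|g\|_{L^2(\eta)}$ (using $\eta(\R^d)=1$ and Cauchy--Schwarz). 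For the high-frequency piece, the classical identity
$$\|g\,\eta\|_{\dot H^{-\sigma}(\R^d)}^2=c_\sigma\iint\frac{g(x)\,\overline{g(y)}}{|x-y|^{d-2\sigma}}\,d\eta(x)\,d\eta(y),$$
combined with the pointwise inequality $|g(x)\,\overline{g(y)}|\le \tfrac{1}{2}(|g(x)|^2+|g(y)|^2)$ and symmetry, bounds it by $\int |g(x)|^2\,U_\eta(x)\,d\eta(x)$, where $U_\eta(x)\coloneqq \int|x-y|^{-(d-2\sigma)}\,d\eta(y)$.

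The last step is the uniform potential bound $U_\eta(x)\lesssim_\varepsilon C_0$. Since $d-2\sigma=t-\varepsilon$, a dyadic annular decomposition of $\{|x-y|\le 1\}$ together with $\eta(B(x,r))\le C_0 r^t$ yields a geometric series $C_0\sum_{k\ge 0}2^{-k\varepsilon}\lesssim_\varepsilon C_0$, and the far-field $\{|x-y|>1\}$ contributes at most $\eta(\R^d)=1\lesssim C_0$. Plugging these estimates into the previous step finishes the proof. The one genuinely delicate point is to guarantee that the Fourier--Riesz identity is literally applicable: for $\sigma\ge d/2$ the kernel $|\xi|^{-2\sigma}$ fails to be locally integrable, and one would have to replace the identity by its Littlewood--Paley analog or work with smooth cutoffs, which would complicate the bookkeeping without adding substance. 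This is precisely why I reduced to $\sigma\in (0,d/2)$ at the outset, where the entire argument goes through cleanly.
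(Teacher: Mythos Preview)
Your proof is correct and in fact supplies more than the paper does: the paper does not prove this statement at all but quotes it from Triebel's monograph \cite{triebel2006theory} (Theorem~7.16 there). Your duality--energy route---reduce via the $H^\sigma$--$H^{-\sigma}$ pairing to bounding $\|g\eta\|_{H^{-\sigma}}$, identify the homogeneous piece with the Riesz $(t-\varepsilon)$-energy of $g\eta$ through \eqref{eq:energysobolev}, and dominate the potential $U_\eta$ pointwise from the Frostman hypothesis---is the standard elementary argument and sidesteps the Besov/Triebel--Lizorkin machinery behind the cited reference.

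One small remark: the low-frequency part of $\|g\eta\|_{H^{-\sigma}}^2$ and the far-field contribution to $U_\eta$ are each bounded only by $\|g\|_{L^2(\eta)}^2$ (using $\eta(\R^d)=1$), so what your argument literally yields is $\|f\|_{L^2(\eta)}\lesssim_\varepsilon(1+C_0)^{1/2}\|f\|_{H^\sigma}$. The assertion ``$1\lesssim C_0$'' is not forced by the stated hypotheses, since nothing prevents $\eta$ from being very spread out. This is harmless for the paper, however: in every application $\supp\eta$ lies in a cube of bounded diameter, and then $1=\eta(\R^d)\le C_0\,\diam(\supp\eta)^t$ forces $C_0\gtrsim 1$ automatically. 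Likewise, replacing $\varepsilon$ by $\min\{\varepsilon,t/2\}$ makes the implicit constant depend on $t$ when $\varepsilon>t/2$, which again does not matter here.
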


 \subsection{Frostman measures}	
 We are going to use two versions of the classical Frostman's lemma. The first one is due to Orponen \cite[Lemma A.1]{orponen2022visible}, and the difference compared to the usual Frostman's lemma is the lower bound on $\nu(\overline{Q})$.
 \begin{lemma}\label{lem:Frostman}
 	If $K\subset[0,1)^d$ is compact, $0<t\le d$, then there exists a Radon measure $\nu$ with $\supp\nu\subset K$, satisfying 
 	\begin{equation*}
 	\nu(B(x,r))\lesssim r^t\quad\text{for $x\in\R^d, r>0$}
 	\end{equation*}
 	and for all dyadic cubes $Q\subset[0,1)^d$
 	\begin{equation*}
 	\nu(\overline{Q})\gtrsim \min(\HH^{t}_\infty(Q\cap K), \HH^{d}(Q)).
 	\end{equation*}
 \end{lemma}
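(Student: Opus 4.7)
The plan is to adapt the standard dyadic construction of a Frostman measure, carefully tracking the lower bound on each dyadic cube. Fix $n\ge 1$. For each dyadic cube $Q\subset[0,1)^d$ of side $2^{-n}$, set $m_n(Q)=\ell(Q)^t$ if $Q\cap K\neq\varnothing$ and $m_n(Q)=0$ otherwise. Propagate upward by
\[ m_n(Q) = \min\Bigl(\ell(Q)^t,\ \sum_{Q'\text{ child of }Q} m_n(Q')\Bigr), \]
and then redistribute top-down: at each cube where the cap is strictly active, rescale the children's masses proportionally so that their total equals $\ell(Q)^t$. The resulting Radon measure $\nu_n$, supported on scale-$2^{-n}$ cubes meeting $K$, satisfies $\nu_n(B(x,r))\lesssim r^t$ by standard arguments, and a weak-$*$ limit as $n\to\infty$ yields a measure $\nu$ supported on $K$ with the same upper bound.

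Unfolding the redistribution gives the multiplicative formula $\nu_n(\overline Q) = m_n(Q)\cdot R(Q)$, where $R(Q)$ is the product of rescaling ratios $\rho(Q^*)=\ell(Q^*)^t/\sum m_n(\text{children of }Q^*)$ taken over the critical ancestors $Q^*\supsetneq Q$. Two bounds drive the argument. First, Carleson's counting argument applied inside the subtree rooted at $Q$ yields $m_n(Q)\gtrsim \HH^t_\infty(Q\cap K_n)$, where $K_n$ denotes the $2^{-n}$-neighbourhood of $K$; this is because the maximal critical descendants of $Q$ form a dyadic cover of $Q\cap K_n$ whose total $\ell^t$-mass equals $m_n(Q)$. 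Second, since each child of $Q^*$ contributes at most $(\ell(Q^*)/2)^t = 2^{-t}\ell(Q^*)^t$ to the sum of children's $m_n$, we get $\rho(Q^*)\ge 2^{-(d-t)}$ at every critical ancestor.

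The crux is to combine these bounds into the conclusion $\nu_n(\overline Q)\gtrsim \min(\HH^t_\infty(Q\cap K),\HH^d(Q))$. The argument proceeds via a dichotomy on the chain of critical ancestors of $Q$: when the product $R(Q)$ is bounded away from zero, the Carleson bound on $m_n(Q)$ transfers directly to yield $\nu_n(\overline Q)\gtrsim \HH^t_\infty(Q\cap K)$; when the chain of strictly critical ancestors is long, the ambient density of $K$ along that chain forces $m_n(Q)$ to be comparable with $\ell(Q)^t$, and since in the worst case $R(Q)\gtrsim \ell(Q)^{d-t}$, the product yields $\nu_n(\overline Q)\gtrsim \ell(Q)^d=\HH^d(Q)$. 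The main obstacle is making this dichotomy quantitative enough to handle intermediate regimes uniformly across all dyadic cubes, and reconciling the two extremes so that the $\min$ appears naturally rather than as a case distinction.
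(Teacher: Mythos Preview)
The paper does not supply its own proof of this lemma; it simply cites \cite[Lemma A.1]{orponen2022visible}. So the comparison is between your attempt and the (unstated here) construction of Orponen.

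Your approach has a genuine gap, and it is not merely a matter of making the dichotomy quantitative: the dichotomy itself is false. You claim that ``when the chain of strictly critical ancestors is long, the ambient density of $K$ along that chain forces $m_n(Q)$ to be comparable with $\ell(Q)^t$.'' But criticality of an ancestor $Q^*$ only says that the \emph{sum} of its children's $m_n$-values exceeds $\ell(Q^*)^t$; this can happen because the siblings of (the ancestor of) $Q$ are heavy while $m_n(Q)$ itself is arbitrarily small. Concretely, take $d=2$, $t=1$, let $Q$ be a dyadic cube of side $2^{-j}$, and set
\[
K \;=\; \bigl([0,1]^2 \setminus B\bigr)\cup D,
\]
where $B$ is an open ball with $\overline{Q}\subset B$ and $r(B)\sim 2^{-j}$, and $D\subset Q$ is a closed disk of radius $r\ll 4^{-j}$. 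Then $Q\cap K = D$, so $\HH^1_\infty(Q\cap K)\sim r$ and $m_n(Q)\sim r$. Every proper ancestor of $Q$ is critical (three of its four children are heavy), and one computes $R(Q)\sim 2^{-j}$, hence the standard construction yields
\[
\nu_n(\overline{Q}) \;=\; m_n(Q)\,R(Q) \;\sim\; r\cdot 2^{-j},
\]
whereas the lemma demands $\nu(\overline{Q})\gtrsim \min(r,4^{-j}) = r$. So the standard bottom-up/top-down Frostman measure simply does not satisfy the stated lower bound, and no refinement of your two estimates $m_n(Q)\gtrsim \HH^t_\infty(Q\cap K)$ and $\rho(Q^*)\ge 2^{-(d-t)}$ can rescue this.

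The lemma is nonetheless true (in the example above one can, for instance, take Lebesgue measure on $[0,1]^2\setminus B$ together with a measure of mass $\sim r$ spread over $D$), but it requires a construction different from the classical one. Orponen's appendix provides such a construction; the point is that the redistribution step must be done more carefully than proportional rescaling, so that small pockets like $D$ are not starved of mass by their heavy surroundings.
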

 We remark that the Frostman measure $\nu$ above satisfies $\nu(K)\sim\HH^{t}_\infty(K)$, and in particular if $t>\dimh(K)$, then $\nu\equiv 0$.
 
 The second version of Frostman's lemma we will use concerns unions of dyadic cubes, and it asserts that at small scales the Frostman measure associated to such sets is just normalized Lebesgue measure. This follows from the usual construction of the Frostman measure using dyadic cubes, as in \cite[Theorem 8.8]{mattila1999geometry}, and we omit the proof.
 \begin{lemma}\label{lem:Frostman2}
 	Let $\cK\subset\mathbb{D}$ be a collection of disjoint dyadic cubes with $K\coloneqq\bigcup_{Q\in\cK}Q\subset [0,1]^d$. For any $0<t\le d$ there exists a Radon measure $\nu$ with $\supp\nu\subset \overline{K}$, satisfying $\nu(K)\sim\HH^t_\infty(K)$,
 	\begin{equation*}
 	\nu(B(x,r))\lesssim r^t\quad\text{for $x\in\R^d, r>0$},
 	\end{equation*}
 	and for all $Q\in\cK$
 	\begin{equation}\label{eq:Frostlower2}
 	\nu|_Q = \frac{\nu(Q)}{\ell(Q)^d}\HH^d|_Q.
 	\end{equation}
 \end{lemma}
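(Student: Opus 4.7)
The plan is to mimic the classical ascending dyadic construction of a Frostman measure (see e.g. \cite[Theorem 8.8]{mattila1999geometry}), but to choose the initial measure so that the Lebesgue-proportional structure on each $Q\in\cK$ is built in from the outset and preserved throughout the rescaling procedure. Concretely, I would define
\[
\nu_0 \coloneqq \sum_{Q\in\cK} \ell(Q)^{t-d}\,\HH^d|_Q.
\]
Then $\supp\nu_0\subset\overline{K}$, on each $Q\in\cK$ the measure $\nu_0$ is a constant multiple of $\HH^d|_Q$, and $\nu_0(Q)=\ell(Q)^t$. Moreover, for any dyadic cube $R\subset Q$ with $Q\in\cK$,
\[
\nu_0(R)=\ell(Q)^{t-d}\ell(R)^d\le \ell(R)^t,
\]
since $\ell(R)\le\ell(Q)$ and $d-t\ge 0$. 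So the Frostman bound is already satisfied strictly inside each $Q\in\cK$.

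Next, I would apply the standard ascending rescaling procedure. Going through the dyadic cubes $R\subset[0,1]^d$ from small sidelength to large, whenever the current measure $\nu$ satisfies $\nu(R)>\ell(R)^t$ I would replace $\nu|_R$ by $(\ell(R)^t/\nu(R))\nu|_R$, so that the mass on $R$ drops to exactly $\ell(R)^t$. After finitely many dyadic scales (above $\max_{Q\in\cK}\ell(Q)$ only, after a small additional argument) the output measure $\nu$ satisfies $\nu(R)\le\ell(R)^t$ for every dyadic $R$, and covering a ball $B(x,r)$ by $O(1)$ comparable dyadic cubes gives $\nu(B(x,r))\lesssim r^t$.

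The only departure from the classical proof is the verification that the rescaling preserves the identity $\nu|_Q=c_Q\HH^d|_Q$ for each $Q\in\cK$. Thanks to the bound on $\nu_0(R)$ derived above, no rescaling is ever triggered at a dyadic cube $R\subsetneq Q$: its mass is already subcritical at the start, and since the algorithm only decreases masses, it remains so throughout. All rescalings affecting the restriction $\nu|_Q$ therefore come from cubes $R\supseteq Q$, each of which multiplies $\nu|_Q$ by a global scalar. Consequently, $\nu|_Q$ remains proportional to $\HH^d|_Q$, and the proportionality constant is forced to be $\nu(Q)/\ell(Q)^d$.

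Finally, the mass comparison $\nu(K)\sim\HH^t_\infty(K)$ is standard: the upper bound follows from any near-optimal $t$-cover of $K$ together with $\nu(B(x,r))\lesssim r^t$, while the lower bound uses that for each $x\in K$ there is a dyadic cube $R_x\ni x$ with $\nu(R_x)\sim\ell(R_x)^t$ (either a cube where rescaling was active, or a cube $Q\in\cK$, for which $\nu(Q)=\ell(Q)^t$ was preserved up to the cumulative scalar from its ancestors), and a Vitali-type extraction produces a disjoint cover whose radii $t$-sum bounds $\HH^t_\infty(K)$ from above. No step here is delicate; the main point — and the only obstacle worth flagging — is the bookkeeping in the previous paragraph, which is what makes this variant slightly different from the standard Frostman lemma.
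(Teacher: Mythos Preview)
Your proposal is correct and follows precisely the approach the paper indicates: the paper omits the proof entirely, simply noting that it follows from the standard dyadic Frostman construction \cite[Theorem~8.8]{mattila1999geometry}, and your sketch spells this out together with the key observation that no rescaling is ever triggered at a cube $R\subsetneq Q$ for $Q\in\cK$. One small slip worth fixing: rescaling can occur at scales below $\max_{Q\in\cK}\ell(Q)$ when $\cK$ contains cubes of varying sidelengths, but the correct claim you make in the following paragraph---that rescaling only affects $\nu|_Q$ through ancestors $R\supseteq Q$---is all the argument actually uses.
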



 Let $\nu$ be a compactly supported Radon measure on $\R^d$. Given $L\in\cG(d,n)$ we will write $\nu_L$ to denote the push-forward of $\nu$ under the orthogonal projection $\pi_L$. Whenever $\nu_L\ll\HH^{n},$ which will always be the case in this paper, the expression ``$\nu_L$'' will denote both the measure and its density with respect to $\HH^{n}$. Similarly, for $\theta\in\mathbb{S}^{d-1}$ we will write $\nu_\theta$ to denote the push-forward of $\nu$ under the orthogonal projection $\pi_\theta:\R^d\to\R^{d-1}.$
 
 The following estimate is well-known (see e.g. \cite[Theorem 5.10]{mattila2015fourier}), so we only sketch the proof.
 \begin{lemma}\label{lem:sobolevest}
 	Suppose that $\nu$ is Radon measure on $\R^d$ with $\supp\nu\subset [0,1]^d$, and that it satisfies $\nu(B(x,r))\le C_0\, r^t$ for some $n<t\le d$. Let $0<\varepsilon<t-n$, and $\sigma = (t-n-\varepsilon)/2$. Then,
 	\begin{equation*}
 	\int_{\cG(d,n)}\|\nu_L\|_{H^\sigma(\R^{n})}^2\, d\gamma_{d,n}(L) \lesssim_\varepsilon C_0\,\nu(\R^d).
 	\end{equation*}
 \end{lemma}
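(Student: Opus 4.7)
The plan is to reduce the averaged Sobolev estimate to a classical Riesz-type energy bound via the Fourier projection-slice relation. First I would apply \eqref{eq:7} to each $\nu_L$, viewed as a measure on $\R^n\simeq L$, to obtain
\begin{equation*}
\|\nu_L\|_{H^\sigma(\R^n)}^2 \lesssim \nu(\R^d)^2 + \|\nu_L\|_{\dot{H}^\sigma(\R^n)}^2.
\end{equation*}
Since $\supp\nu\subset[0,1]^d$ lies in a single ball of radius $\sim 1$, the Frostman hypothesis gives $\nu(\R^d)\lesssim C_0$, so $\nu(\R^d)^2 \lesssim C_0\,\nu(\R^d)$ already has the desired form. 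It therefore suffices to bound $\int_{\cG(d,n)}\|\nu_L\|_{\dot{H}^\sigma(\R^n)}^2\,d\gamma_{d,n}(L)$.

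Next I would use the projection-slice identity $\widehat{\nu_L}(\xi)=\widehat{\nu}(\xi)$ for $\xi\in L$, rewriting
\begin{equation*}
\|\nu_L\|_{\dot{H}^\sigma(\R^n)}^2 = \int_L |\widehat{\nu}(\xi)|^2\,|\xi|^{2\sigma}\,d\HH^n(\xi),
\end{equation*}
and then average over $\cG(d,n)$. The standard Grassmannian polar-coordinate formula
\begin{equation*}
\int_{\cG(d,n)}\int_L g(\xi)\,d\HH^n(\xi)\,d\gamma_{d,n}(L) \sim \int_{\R^d}g(\xi)\,|\xi|^{n-d}\,d\xi,
\end{equation*}
applied with $g(\xi)=|\widehat{\nu}(\xi)|^2|\xi|^{2\sigma}$, converts the left-hand side of the desired inequality into $\int_{\R^d}|\widehat{\nu}(\xi)|^2|\xi|^{2\sigma+n-d}\,d\xi$. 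Since $2\sigma+n=t-\varepsilon$ by the choice of $\sigma$, this is a constant multiple of the Riesz $(t-\varepsilon)$-energy $I_{t-\varepsilon}(\nu)$.

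The final step is to bound this energy using the Frostman hypothesis. A layer-cake computation gives, for each $x$,
\begin{equation*}
\int\frac{d\nu(y)}{|x-y|^{t-\varepsilon}} \lesssim \int_0^{\diam\supp\nu}\frac{\nu(B(x,r))}{r^{t-\varepsilon+1}}\,dr + \nu(\R^d) \lesssim \frac{C_0}{\varepsilon}+C_0,
\end{equation*}
where the $1/\varepsilon$ comes from $\int_0^1 r^{\varepsilon-1}\,dr$. Integrating in $x$ against $\nu$ yields $I_{t-\varepsilon}(\nu)\lesssim_\varepsilon C_0\,\nu(\R^d)$, completing the bound and accounting for the $\varepsilon$-dependent implicit constant in the statement.

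I do not foresee a serious difficulty here. The two less-elementary tools used are the identification $\widehat{\nu_L}=\widehat{\nu}|_L$ and the Grassmannian polar formula, both of which are classical and appear in Mattila's books. The only quantitative point that requires attention is the $1/\varepsilon$ blow-up of the energy integral near the origin, which is exactly what forces the loss $\sigma < (t-n)/2$ rather than $\sigma \le (t-n)/2$ in the statement.
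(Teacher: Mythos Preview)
Your proposal is correct and follows essentially the same route as the paper: reduce to the homogeneous norm via \eqref{eq:7}, use the projection-slice identity $\widehat{\nu_L}=\widehat{\nu}|_L$ together with the Grassmannian polar formula to rewrite the averaged homogeneous norm as $I_{t-\varepsilon}(\nu)$, and then bound this Riesz energy by $C_0\,\nu(\R^d)$ using the Frostman condition. The only cosmetic difference is that the paper invokes \eqref{eq:7} at the end rather than the beginning, and cites \cite[Theorem 5.10]{mattila2015fourier} for the Grassmannian computation rather than writing it out.
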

 \begin{proof}
 	Recall that given $0<s<d$, the Riesz $s$-energy of a Radon measure $\mu$ on $\R^d$ is defined as
 	\begin{equation*}
 	I_s(\mu) \coloneqq \iint \frac{1}{|x-y|^{s}}\ d\mu(x)d\mu(y).
 	\end{equation*}
 	If $\mu$ is compactly supported, then we also have the following well-known identity relating Riesz energy with the homogeneous Sobolev norm: for $0<s<d$
 	\begin{equation}\label{eq:energysobolev}
 	I_s(\mu)=C_{d,s}\int_{\R^d} |\widehat{\mu}(\xi)|^2|\xi|^{s-d}\, d\xi,
 	\end{equation}
 	see \cite[Theorem 3.10]{mattila2015fourier} for the proof.
 	
 	A simple computation shows that the Frostman condition $\nu(B(x,r))\le C_0\, r^t$ implies
 	\begin{equation}\label{eq:rieszener}
 	I_{t-\varepsilon}(\nu)\lesssim_\varepsilon C_0\,\nu(\R^d).
 	\end{equation}
 	Using the fact that $\widehat{\nu_L}(\xi)=\widehat{\nu}(\xi)$ for $\xi\in L$, another standard computation (see \cite[Theorem 5.10]{mattila2015fourier} for details) gives
 	\begin{multline*}
 	\int_{\cG(d,n)}\|\nu_L\|_{\dot{H}^\sigma(\R^{n})}^2\, d\theta = \int_{\cG(d,n)}\int_{L}|\widehat{\nu_L}(\xi)|^2|\xi|^{t-\varepsilon-n}\ d\HH^n(\xi)\, d\gamma_{d,n}(L)\\
 	\sim \int_{\R^d} |\widehat{\nu}(\xi)|^2|\xi|^{t-\varepsilon-d}\ d\xi
 	\sim I_{t-\varepsilon}(\nu)\lesssim_\varepsilon C_0\,\nu(\R^d).
 	\end{multline*}
 	Together with \eqref{eq:7}, this gives the desired estimate.
 \end{proof}	
 
 \section{Slicing Ahlfors regular sets}
 In this section we prove \thmref{thm:main3}. We first prove the following more quantitative result, which will be crucial later in the proof of \thmref{thm:main}. 	
 
 Given a locally integrable function $f:\R^d\to \R$, the centered Hardy-Littlewood maximal function of $f$ is defined as
 \begin{equation*}
 \mathcal{M}f(x) = \sup_{r>0}\frac{1}{r^d}\int_{B(x,r)} |f(y)|\, dy.
 \end{equation*}
 Recall that if $\nu$ is a Radon measure on $\R^d$ and $L\in\cG(d,n)$, then $\nu_L$ is the pushforward of $\nu$ under the orthogonal projection $\pi_L:\R^d\to\R^n$.
 \begin{prop}\label{prop:heavypart}
 	Let $n<s\le \min(2n,d)$ and $0<\varepsilon<(s-n)/2$. Let $\nu$ be an $s$-Ahlfors regular measure on $\R^d$ with $F\coloneqq\supp\nu\subset B(0,1)$. Let $M> 3^n\nu(F)$, $L\in \cG(d,n)$, and
 	\begin{equation*}
 	F_M =\bigg\{x\in F\ :\ \mathcal{M}\nu_L(\pi_L(x))\ge M \bigg\}.
 	\end{equation*}
 	Then, for $\sigma = (s-n-\varepsilon)/2$ we have
 	\begin{equation*}
 	\HH^{n+2\varepsilon}_\infty(F_M)\lesssim_{\varepsilon} M^{-1}\|\nu_L\|^2_{H^\sigma(\R^{n})}.
 	\end{equation*}
 \end{prop}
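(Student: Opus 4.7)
The plan is to combine the weak-type maximal-function estimate of Lemma~\ref{lem:heavyaux} (applied in $\R^n$ to the projected measure $\nu_L$) with a dyadic stratification of the level sets of $\cM\nu_L$, and then to lift the resulting planar covers to $\R^d$ using the $s$-Ahlfors regularity of $\nu$. The factor $M^{-1}$ (rather than the $M^{-2}$ that a single application of the weak-type bound would give) will emerge from summing a geometric series over the dyadic levels of $\cM\nu_L$.

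Concretely, with $\sigma=(s-n-\varepsilon)/2$ (so $n-2\sigma=2n-s+\varepsilon$), Lemma~\ref{lem:heavyaux} applied to $\nu_L\in H^\sigma(\R^n)$ gives, for every $M'>0$,
$$\HH^{2n-s+\varepsilon}_\infty\bigl(\{y\in\R^n:\cM\nu_L(y)>M'\}\bigr)\lesssim_\varepsilon (M')^{-2}\|\nu_L\|_{H^\sigma(\R^n)}^2.$$
Decompose $\{\cM\nu_L\ge M\}$ into the dyadic level sets $A_k=\{2^kM\le \cM\nu_L<2^{k+1}M\}$ for $k\ge 0$, and set $F_M^{(k)}=F\cap\pi_L^{-1}(A_k)$. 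For each $k$ take an almost-optimal cover $\{B(y_i^{(k)},r_i^{(k)})\}_i$ of $A_k$ with $\sum_i (r_i^{(k)})^{2n-s+\varepsilon}\lesssim (2^kM)^{-2}\|\nu_L\|_{H^\sigma}^2$, and after enlarging each ball by a fixed factor assume it meets $A_k$, so that for some absolute $C$ we have the a priori bound $\nu_L(B(y_i^{(k)},Cr_i^{(k)}))\lesssim 2^kM\,(r_i^{(k)})^n$. Lift each planar ball to $\R^d$ via a standard packing argument based on the lower Ahlfors bound $\nu(B(x,r))\gtrsim r^s$: a maximal $r_i^{(k)}$-separated subset of $F\cap\pi_L^{-1}(B(y_i^{(k)},r_i^{(k)}))$ has cardinality
$$N_i^{(k)}\lesssim \frac{\nu_L(B(y_i^{(k)},Cr_i^{(k)}))}{(r_i^{(k)})^s}\lesssim 2^kM\,(r_i^{(k)})^{n-s},$$
and the balls of radius $r_i^{(k)}$ around these points cover $F\cap\pi_L^{-1}(B(y_i^{(k)},r_i^{(k)}))$.

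Adding up the contributions of a single level yields
$$\HH^{n+2\varepsilon}_\infty(F_M^{(k)})\lesssim \sum_i N_i^{(k)}(r_i^{(k)})^{n+2\varepsilon}\lesssim 2^kM\sum_i (r_i^{(k)})^{2n-s+2\varepsilon}\lesssim 2^kM\cdot (2^kM)^{-2}\|\nu_L\|_{H^\sigma}^2,$$
where the extra $r_i^\varepsilon$ is absorbed using $r_i^{(k)}\lesssim 1$. Summing the geometric series $\sum_{k\ge 0}(2^kM)^{-1}\lesssim M^{-1}$ then gives the desired bound $\HH^{n+2\varepsilon}_\infty(F_M)\lesssim_\varepsilon M^{-1}\|\nu_L\|_{H^\sigma(\R^n)}^2$.

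The main obstacle is Lemma~\ref{lem:heavyaux} itself, which is the genuinely new analytic input and, as indicated in the introduction, is proved via the Sobolev trace theorem (Theorem~\ref{thm:traces}). Once the weak-type bound $\cM:H^\sigma(\R^n)\to L^{2,\infty}(\HH^{n-2\sigma}_\infty)$ is available, the rest of the proposition is bookkeeping; the delicate part there is the exponent match: lifting the exponent $n-2\sigma=2n-s+\varepsilon$ in $\R^n$ to $n+2\varepsilon$ in $\R^d$ costs exactly one factor of $\nu_L(B)/(r_i^{(k)})^s$ from the Ahlfors-regular counting, which the dyadic level bound converts into precisely one power of $2^kM$, producing the summable sequence $(2^kM)^{-1}$ and hence the single $M^{-1}$ in the final estimate.
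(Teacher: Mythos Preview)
Your argument is correct, and it takes a genuinely different route from the paper's. Two minor quibbles: what you call Lemma~\ref{lem:heavyaux} is really the combination of that lemma with the one-third trick (Lemma~\ref{lem:tildeH}), since Lemma~\ref{lem:heavyaux} as stated only handles the dyadic superlevel set $H$, not $\{\cM\nu_L\ge M'\}$ itself; and the exponent delivered is $2n-s+2\varepsilon$, not $2n-s+\varepsilon$ (with $\sigma=(s-n-\varepsilon)/2$ the trace theorem just barely fails at $t=2n-s+\varepsilon$), so your ``absorb the extra $r_i^\varepsilon$'' step is unnecessary --- the exponents already match.

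The substantive difference is in how the factor $M^{-1}$ (rather than $M^{-2}$) is produced. You decompose $\{\cM\nu_L\ge M\}$ into the dyadic shells $A_k=\{2^kM\le\cM\nu_L<2^{k+1}M\}$, apply the weak-type bound at each level $2^kM$, and use that every covering ball meeting $A_k$ contains a point where $\cM\nu_L<2^{k+1}M$ to control $\nu_L(CB)\lesssim 2^kM\,r^n$; the resulting series $\sum_k(2^kM)^{-1}$ gives $M^{-1}$. The paper instead applies the weak-type bound \emph{once} at level $M$, then runs a stopping-time argument: starting from a near-optimal cover $\cQ$ of $H'=\{\cM\nu_L\ge M\}$, each cube is enlarged to the smallest dyadic ancestor not contained in $H'$, producing a family $\cP$ with the \emph{same} content bound $\sum_{P\in\cP}\ell(P)^{2n-s+2\varepsilon}\lesssim M^{-2}\|\nu_L\|_{H^\sigma}^2$ (Lemma~\ref{eq:4}) and with every $P$ meeting $(H')^c$, whence $\nu_L(CP)\lesssim M\ell(P)^n$ uniformly. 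Your level-set decomposition is the more standard manoeuvre; the paper's Whitney-style modification is slightly slicker in that it avoids any summation over levels and needs the weak-type estimate only once. Both lifts to $\R^d$ via lower Ahlfors regularity are identical.
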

 Most of this section is dedicated to proving this result. 	
 Fix $\nu$ and $F$ as above, and let
 \begin{equation*}
 H' = \{x\in \R^n\ :\ \cM\nu_L(x)\ge M \},
 \end{equation*}
 so that $H' = \pi_L(F_M)$. We would like to get a bound of the form
 \begin{equation*}
 \HH^{2n-s+2\varepsilon}_\infty(H')\lesssim_\varepsilon M^{-2}\|\nu_L\|^2_{H^\sigma(\R^{n})}.
 \end{equation*}
 We first establish a dyadic variant of this estimate.
 
 Let $\cB\subset\mathbb{D}_{\R^{n}}$ be the family of maximal dyadic cubes satisfying
 \begin{equation}\label{eq:nubigonQ}
 \frac{\nu_L(Q)}{\ell(Q)^{n}}\ge 3^{-n}M.
 \end{equation}
 Since $3^{-n}M> \nu(F)$ and $\supp\nu_L\subset [0,1)^n$, we get that all cubes from $\cB$ are contained in $[0,1)^n$. Let $H=\bigcup_{Q\in\cB}Q$.	
 
 Note that for any $Q\in\mathbb{D}_{\R^n}$ which is not contained in $H$ we have
 \begin{equation}\label{eq:nusmall}
 \frac{\nu_L(Q)}{\ell(Q)^{n}}< 3^{-n}M.
 \end{equation}
 
 \begin{lemma}\label{lem:heavyaux}
 	We have
 	\begin{equation}\label{eq:3}
 	\HH^{2n-s+2\varepsilon}_\infty(H)\lesssim_\varepsilon M^{-2}\|\nu_L\|^2_{H^\sigma(\R^{n})}.
 	\end{equation}
 \end{lemma}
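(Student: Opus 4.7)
The plan is to apply the dyadic Frostman Lemma~\ref{lem:Frostman2} on $\R^n$ to the union $H=\bigcup_{Q\in\cB}Q$ with Hausdorff exponent $t=2n-s+2\varepsilon$. Note $t\in(0,n)$ thanks to the assumptions $s\le 2n$ and $\varepsilon<(s-n)/2$, so the hypotheses of both Frostman's lemma and the trace theorem are satisfied. This produces a Radon measure $\eta$ supported on $\overline{H}$ with total mass $\eta(H)\sim \HH^t_\infty(H)$, satisfying the Frostman growth $\eta(B(x,r))\lesssim r^t$ and, crucially, the pointwise uniformity property $\eta|_Q=\eta(Q)\,\ell(Q)^{-n}\,\HH^n|_Q$ on each $Q\in\cB$. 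The strategy is then to sandwich $\|\nu_L\|_{L^2(\eta)}^2$ between a lower bound produced by the defining inequality of $\cB$ and an upper bound coming from the trace inequality.

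For the lower bound, fix $Q\in\cB$. The uniformity of $\eta|_Q$ together with the Cauchy--Schwarz inequality in $L^2(\HH^n|_Q)$ (in the form $\nu_L(Q)^2\le \ell(Q)^n\int_Q \nu_L^2\,d\HH^n$) gives
\[
\int_Q\nu_L^2\,d\eta = \frac{\eta(Q)}{\ell(Q)^n}\int_Q \nu_L^2\,d\HH^n \ge \frac{\eta(Q)}{\ell(Q)^n}\cdot\frac{\nu_L(Q)^2}{\ell(Q)^n}\ge 3^{-2n}M^2\,\eta(Q),
\]
where the last inequality is exactly \eqref{eq:nubigonQ}. Summing over $Q\in\cB$ and using $\eta(H)\sim\HH^t_\infty(H)$ yields
\[
\|\nu_L\|_{L^2(\eta)}^2 \gtrsim M^2\,\HH^{2n-s+2\varepsilon}_\infty(H).
\]

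For the upper bound, I apply Theorem~\ref{thm:traces} in $\R^n$ to the measure $\eta$ (whose Frostman constant is $\lesssim 1$) and to Schwartz mollifications of $\nu_L$, passing to the limit using that $\nu_L\in H^\sigma(\R^n)$. With $t=2n-s+2\varepsilon$ the threshold index becomes $(n-t+\varepsilon)/2=(s-n-\varepsilon)/2$, which equals our $\sigma$; hence the trace inequality furnishes $\|\nu_L\|_{L^2(\eta)}^2\lesssim_\varepsilon \|\nu_L\|_{H^\sigma(\R^n)}^2$. Combining the two bounds yields \eqref{eq:3}.

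The main technical point — and essentially the content of the lemma — is to arrange the Frostman exponent $t$ so that the resulting measure $\eta$ both realises $\HH^t_\infty(H)$ as its total mass and lies on the critical line of the trace inequality. The algebraic identity $t=2n-s+2\varepsilon$ is precisely what balances the two ends of the sandwich. The second delicate point is the dyadic uniformity supplied by Lemma~\ref{lem:Frostman2}, which converts the integral lower bound $\nu_L(Q)\gtrsim M\ell(Q)^n$ into a genuine pointwise $L^2(\eta)$-bound without any assumption on the distribution of $\nu_L$ inside $Q$.
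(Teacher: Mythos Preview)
Your argument is correct and follows the same Frostman-plus-trace sandwich as the paper. The one difference is that you obtain the lower bound $\int_Q\nu_L^2\,d\eta\gtrsim M^2\eta(Q)$ directly from Jensen applied to the $L^2$ density of $\nu_L$ (legitimate since we may assume $\nu_L\in H^\sigma\subset L^2(\R^n)$, else \eqref{eq:3} is trivial), whereas the paper proves the analogous bound for the mollification $f_\gamma=\nu_L\ast\varphi_\gamma$ via a small geometric argument involving a dyadic child $\widetilde Q\subset Q$ and a truncated cone in $B(0,\gamma)$. Your route is shorter, but then the phrase ``passing to the limit'' in the upper bound needs a word of justification: the dyadic uniformity you invoke gives $\eta\ll\HH^n$, so $f_\gamma\to\nu_L$ $\eta$-a.e.\ along a subsequence, and Fatou yields $\|\nu_L\|_{L^2(\eta)}^2\le\liminf\|f_\gamma\|_{L^2(\eta)}^2\lesssim_\varepsilon\|\nu_L\|_{H^\sigma}^2$. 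The paper sidesteps this limit by keeping the mollifier on both sides of the sandwich and choosing $\gamma$ small depending on $H$.
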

 \begin{proof}
 	Assume that $\HH^{2n-s+2\varepsilon}_\infty(H)>0$, since otherwise there is nothing to prove. Let $\eta$ be the Radon measure on $H$ given by Frostman's lemma \lemref{lem:Frostman2}, normalized so that $\eta(\R^n)=1$ (this can be done since $\HH^{2n-s+2\varepsilon}_\infty(H)>0$), so that it satisfies
 	\begin{equation*}
 	\eta(B(x,r))\lesssim (\HH^{2n-s+2\varepsilon}_\infty(H))^{-1} r^{2n-s+2\varepsilon},
 	\end{equation*}
 	where the implicit constant depends only on the dimension.
 	
 	We would like to apply \thmref{thm:traces} with respect to $\nu_L$ and $\eta$. However, it is not clear if we can do that, since $\nu_L$ might not be a Schwartz function. To overcome this minor issue, we will mollify $\nu_L$. 
 	
 	Let $\varphi:\R^{n}\to \R$ be a non-negative radial $C^\infty$ function with $\varphi(0)>0,$ $\supp\varphi\subset B(0,1)$ and $\int\varphi =1$. For $0<\gamma<1$ set $\varphi_{\gamma}(x) = \gamma^{-n}\varphi(x/\gamma)$. Let $f_\gamma = \nu_L\ast\varphi_{\gamma}$.
 	
 	By \thmref{thm:traces} applied to $f_\gamma$ and $\eta$, with $t= 2n-s+2\varepsilon$ and $\sigma = (s-n-\varepsilon)/2$, we have
 	\begin{multline}\label{eq:15}
 	\int_{\R^{n}} |f_\gamma|^2\, d\eta 	\lesssim_\varepsilon (\HH^{2n-s+2\varepsilon}_\infty(H))^{-1} \|f_\gamma\|_{H^\sigma(\R^{n})}^2\\
 	\lesssim (\HH^{2n-s+2\varepsilon}_\infty(H))^{-1} \|\nu_L\|_{H^\sigma(\R^{n})}^2,
 	\end{multline}
 	where in the last estimate we used the fact that convolving with $\varphi_{\gamma}$ does not increase the Sobolev norm. 
 	
 	Now we get a lower bound for the left hand side. First, we claim that if $Q \in \cB$ and $0<\gamma\le \ell(Q)/2$, then
 	\begin{equation}\label{eq:14}
 	\int_Q f_\gamma(x)\, dx \gtrsim M\ell(Q)^{n}.
 	\end{equation}
 	Indeed, let $\widetilde{Q}\subset Q$ be a dyadic subcube with $\ell(\widetilde{Q})=\ell(Q)/2$ and $\nu_L(\widetilde{Q})\gtrsim M\ell(Q)^{n}$ (such subcube exists by \eqref{eq:nubigonQ}). It follows from elementary geometry that the set 
 	\begin{equation*}
 	C \coloneqq \{y\in B(0,\gamma)\ :\ y+\widetilde{Q}\subset Q\}
 	\end{equation*}
 	is a truncated one-sided cone centered at 0 with opening angle $\sim 1$. Since $\varphi_{\gamma}$ is radial and $\int_{B(0,\gamma)}\varphi_{\gamma}=1$, we get $\int_C \varphi_{\gamma}(y)\, dy\gtrsim 1$. Thus,
 	\begin{multline*}
 	\int_Q f_\gamma(x)\, dx=\int_Q \nu_L\ast\varphi_{\gamma}(x)\, dx = \int_{B(0,\gamma)} \varphi_{\gamma}(y)\int_Q \nu_L(x-y)\, dx\, dy\\
 	\ge \int_{C} \varphi_{\gamma}(y)\int_Q \nu_L(x-y)\, dx\, dy\ge \int_{C} \varphi_{\gamma}(y)\int_{\widetilde{Q}} \nu_L(z)\, dz\, dy\\
 	= \nu_L(\widetilde{Q})\int_{C} \varphi_{\gamma}(y)\, dy\gtrsim M\ell(Q)^{n},
 	\end{multline*}
 	where in the last estimate we used that $\nu_L(\widetilde{Q})\gtrsim M\ell(Q)^{n}$, by the definition of $\widetilde{Q}$. This gives \eqref{eq:14}.
 	
 	It follows from the Cauchy-Schwarz inequality, the property \eqref{eq:Frostlower2} of the Frostman measure $\eta$, and \eqref{eq:14}, that if $Q \in \cB$ and $0<\gamma\le \ell(Q)/2$, then
 	\begin{equation*}
 	(\eta(Q))^{1/2}\big(\int_Q |f_\gamma|^2\, d\eta\big)^{1/2}\ge \int_Q f_\gamma\, d\eta = \frac{\eta(Q)}{\ell(Q)^{n}} \int_Q f_\gamma(x)\, dx\gtrsim M\eta(Q),
 	\end{equation*}
 	and so
 	\begin{equation}\label{eq:16}
 	\int_Q |f_\gamma|^2\, d\eta\gtrsim M^2\,\eta(Q).
 	\end{equation}
 	
 	Let $\cB_{\gamma}=\{Q\in \cB \ :\ \ell(Q)/2>\gamma\}$, and $H_\gamma = \bigcup_{Q\in\cB_\gamma} Q$. Note that $\{H_{1/k}\}_{k\in\mathbb{N}}$ is an increasing sequence of sets, with $H = \bigcup_k H_{1/k}$. Hence, by the continuity of measure we have
 	\begin{equation*}
 	1=\eta(H)=\lim_{k\to\infty}\eta(H_{1/k}).
 	\end{equation*}
 	Let $k$ be so large that $\eta(H_{1/k})\ge 1/2$. Then, we get from \eqref{eq:16} and \eqref{eq:15} that
 	\begin{equation*}
 	M^2\lesssim M^2\sum_{Q\in\cB_{1/k}}\eta(Q)\lesssim\int_{H_{1/k}}|f_{1/k}|^2\, d\eta\lesssim (\HH^{2n-s+2\varepsilon}_\infty(H))^{-1} \|\nu_L\|_{H^\sigma(\R^{n})}^2.
 	\end{equation*}
 	This gives the desired inequality \eqref{eq:3}.
 \end{proof}
 
 In order to pass from \eqref{eq:3} to an estimate on the size of $H'= \{x\in \R^n\ :\ \cM\nu_L(x)\ge M \},$ we will use the well-known one-third trick. For every $e\in\{0,1\}^n$ consider the translated dyadic grid on $\R^n$
 \begin{equation*}
 \mathbb{D}^e_{\R^n}=\frac{1}{3}e+\mathbb{D}_{\R^n}.
 \end{equation*}
 The proof of the following lemma can be found e.g. in \cite[Section 3]{lerman2003quantifying}.
 \begin{lemma}\label{lem:one-third}
 	For every $x\in\R^n$ and $0<r<1/3$ there exists $e\in\{0,1\}^n$ and $Q\in\mathbb{D}_{\R^n}^e$ such that $B(x,r)\subset Q$ and $\ell(Q)\le 3r$.
 \end{lemma}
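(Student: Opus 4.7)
The statement is the classical one-third trick, and I would prove it by exploiting the arithmetic of the shift $1/3$ relative to the dyadic lattice. My plan is threefold: first, fix a dyadic scale $2^{-k}$ comparable to $r$; second, show coordinate by coordinate that at least one of the two choices $e_i\in\{0,1\}$ places $x_i$ far from the scale-$2^{-k}$ grid boundaries of $\mathbb{D}^{e_i}$ in the $i$-th direction; third, assemble the coordinate-wise choices into a single $e\in\{0,1\}^n$ and read off the containing cube $Q\in\mathbb{D}^e_{\R^n}$ of side $2^{-k}$.

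The key arithmetic observation underlying the coordinate-wise step is the following: since $3\nmid 2^k$, writing $2^k=3p+r'$ with $r'\in\{1,2\}$ gives $1/3=p\cdot 2^{-k}+(r'/3)\cdot 2^{-k}$, so $1/3\bmod 2^{-k}$ equals either $2^{-k}/3$ or $2\cdot 2^{-k}/3$. Consequently, in each coordinate direction the two boundary sets $2^{-k}\Z$ and $2^{-k}\Z+1/3$ interleave with consecutive spacing at least $2^{-k}/3$. Therefore, for each $x_i$ at least one of the two choices $e_i\in\{0,1\}$ satisfies $\dist(x_i,2^{-k}\Z+e_i/3)\ge 2^{-k}/6$; otherwise, two adjacent boundaries (one from each grid) would lie within $2^{-k}/3$ of one another, violating the interleaving. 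Selecting such an $e_i$ independently in each coordinate yields the vector $e\in\{0,1\}^n$.

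With this $e$, letting $Q\in\mathbb{D}^e_{\R^n}$ be the unique scale-$2^{-k}$ cube containing $x$, the coordinate-wise construction ensures $B(x,2^{-k}/6)\subset Q$, so $B(x,r)\subset Q$ as soon as $r\le 2^{-k}/6$. My plan is to take $k$ as the smallest integer with $2^{-k}\ge 6r$, which is available precisely because the hypothesis $r<1/3$ keeps the relevant $k$ in the range where the arithmetic above applies. The only real (and minor) obstacle is bookkeeping the constants in order to match the specific bound $\ell(Q)\le 3r$ asserted in the statement: the cleanest version of the above argument gives $\ell(Q)\lesssim r$ with a somewhat larger implicit constant, and a careful calibration (or a mild sharpening using the precise value of $1/3\bmod 2^{-k}$) is needed to squeeze it down to the exact $3r$ claimed. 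This calibration does not affect the essential structure of the argument.
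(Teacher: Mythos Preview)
The paper does not actually prove this lemma; it simply cites \cite{lerman2003quantifying}. Your argument is the standard proof of the one-third trick and is correct for the qualitative conclusion $\ell(Q)\lesssim r$: the key arithmetic fact $1/3\bmod 2^{-k}\in\{2^{-k}/3,\,2\cdot 2^{-k}/3\}$, the coordinate-wise selection of $e_i$, and the assembly into a single cube are all fine.

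Where you go wrong is in dismissing the constant as ``minor bookkeeping'' that can be ``squeezed down'' to $3$. It cannot. Your own computation shows that with two shifts per coordinate the minimum gap between the grids $2^{-k}\Z$ and $2^{-k}\Z+1/3$ is exactly $2^{-k}/3$, so the disjointness of the two ``unsafe'' neighbourhoods forces $2r\le 2^{-k}/3$, i.e.\ $\ell(Q)=2^{-k}\ge 6r$; choosing the smallest admissible scale then gives only $\ell(Q)<12r$. No sharpening of the arithmetic helps: for $n=1$, $x=0$, $r=1/10$ one checks directly that no interval of length $\le 3/10$ from either $\mathbb{D}_{\R}$ or $\frac13+\mathbb{D}_{\R}$ contains $(-1/10,1/10)$ (at scale $1/4$ the shifted intervals near $0$ are $[-1/6,1/12)$ and $[1/12,1/3)$, neither of which works; smaller scales are too short). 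So the lemma \emph{as stated}, with $e\in\{0,1\}^n$ and $\ell(Q)\le 3r$, is not correct; presumably the intended statement uses the usual $3^n$ grids $e\in\{0,1,2\}^n$ (and even then the sharp bound is $\ell(Q)<6r$, not $3r$).

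For the paper this is harmless: the lemma is only used to pass from balls to dyadic cubes with a dimensional loss, and any bound $\ell(Q)\le Cr$ works after replacing the threshold $3^{-n}M$ in the definition of $\cB$ by $C^{-n}M$. But you should not present the constant $3$ as attainable by your argument, because it is not.
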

 
 Let $\cB^e\subset\mathbb{D}^e_{\R^n}$ be defined analogously as $\cB$, but using the translated grid $\mathbb{D}^e_{\R^n}$. Set $H^e=\bigcup_{Q\in\cB^e}Q$. By \lemref{lem:heavyaux}, for each $e\in\{0,1\}^n$ we have $\HH^{2n-s+2\varepsilon}_\infty(H^e)\lesssim M^{-2}\|\nu_L\|^2_{H^\sigma(\R^{n})}$.
 
 \begin{lemma}\label{lem:tildeH}
 	We have 
 	\begin{equation*}
 	H'\subset\bigcup_{e\in\{0,1\}^n} H^e.
 	\end{equation*}
 \end{lemma}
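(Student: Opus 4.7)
The natural plan is to invoke \lemref{lem:one-third} (the one-third trick) to replace the Hardy--Littlewood ball condition defining $H'$ with a dyadic cube condition. Take $x\in H'$, so that $\cM\nu_L(x)\ge M$ and there exists $r>0$ with $\nu_L(B(x,r))\ge Mr^n$ (modulo a harmless approximation if the supremum is not attained, discussed at the end). Because the hypothesis $M>3^n\nu(F)$ gives $\nu_L(\R^n)\le\nu(F)<3^{-n}M$, the inequality $Mr^n\le\nu_L(B(x,r))\le\nu_L(\R^n)<3^{-n}M$ forces $r<1/3$, which is exactly the hypothesis needed to apply \lemref{lem:one-third}.

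The lemma then produces $e\in\{0,1\}^n$ and a cube $Q\in\mathbb{D}^e_{\R^n}$ with $B(x,r)\subset Q$ and $\ell(Q)\le 3r$, whence
\[
\frac{\nu_L(Q)}{\ell(Q)^n}\ge\frac{\nu_L(B(x,r))}{(3r)^n}\ge\frac{Mr^n}{(3r)^n}=3^{-n}M,
\]
so $Q$ satisfies the defining density condition for membership in $\cB^e$. Since sufficiently large dyadic cubes have density below $3^{-n}M$ (their total $\nu_L$-mass being bounded by $\nu(F)<3^{-n}M$), the cube $Q$ has a well-defined maximal dyadic ancestor $Q^*\in\mathbb{D}^e_{\R^n}$ still satisfying the threshold; this $Q^*$ belongs to $\cB^e$ by construction, and $x\in Q\subset Q^*\subset H^e$, which is the required inclusion.

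The only mild subtlety, and the one place where a touch of care is needed, is the boundary case where the supremum in $\cM\nu_L(x)\ge M$ is not attained. One handles this by running the argument with radii $r_k$ satisfying $\nu_L(B(x,r_k))\ge(M-1/k)r_k^n$ to produce cubes $Q_k\ni x$ in grids $\mathbb{D}^{e_k}_{\R^n}$ with density $\ge(M-1/k)/3^n$. Since $\{0,1\}^n$ is finite and $x$ lies in only countably many cubes per grid, one extracts a subsequence along which $e_k=e$ is constant and the $Q_k$ are nested, and then either (a) $\ell(Q_k)$ stabilizes, in which case some $Q_k$ itself has density $\ge 3^{-n}M$, or (b) $\ell(Q_k)\to 0$, in which case any cluster cube does the job. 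This reduces matters to the attained case above and completes the proof.
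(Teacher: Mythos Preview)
Your argument is correct and matches the paper's proof exactly: invoke \lemref{lem:one-third} on a ball witnessing $\cM\nu_L(x)\ge M$ (using $M>3^n\nu(F)$ to force $r<1/3$) to obtain a cube in some translated grid with density $\ge 3^{-n}M$, hence $x\in H^e$. Your added paragraph on the unattained-supremum case is more scrupulous than the paper (which simply asserts a witnessing ball exists) and your maximal-ancestor step is spelled out where the paper leaves it implicit; the only quibble is that your case~(b) (``any cluster cube does the job'') is not quite complete, but the cleanest fix is to note that the downstream use of the lemma is unaffected if one works with threshold $M'\in(3^n\nu(F),M)$ and lets $M'\uparrow M$ after applying \lemref{lem:heavyaux}.
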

 \begin{proof}
 	Let $x\in H'$. Then, there exists some ball $B(x,r)$ such that
 	\begin{equation*}
 	\frac{\nu_L(B(x,r))}{r^n}\ge M.
 	\end{equation*}
 	Since $M>3^n\nu(F)$, we have $0<r<1/3$. By \lemref{lem:one-third}, there exists $e\in\{0,1\}^n$ and $Q\in\mathbb{D}_{\R^n}^e$ such that $B(x,r)\subset Q$ and $\ell(Q)\le 3r$. Thus,
 	\begin{equation*}
 	\frac{\nu_L(Q)}{\ell(Q)^n}\ge3^{-n}\frac{\nu_L(B(x,r))}{r^n}\ge 3^{-n}M,
 	\end{equation*}
 	which gives $x\in H^e$.
 \end{proof}
 
 It follows from \lemref{lem:tildeH} and \lemref{lem:heavyaux} that
 \begin{equation*}
 \HH^{2n-s+2\varepsilon}_\infty(H')\le \sum_{e\in\{0,1\}^n}\HH^{2n-s+2\varepsilon}_\infty(H^e)\lesssim_\varepsilon M^{-2}\|\nu_L\|^2_{H^\sigma(\R^{n})}.
 \end{equation*}
 

 Let $\cQ\subset\mathbb{D}_{\R^n}$ be a family of disjoint dyadic cubes covering $H'$ and such that
 \begin{equation}\label{eq:Qcoveringdef}
 \sum_{Q\in\cQ}\ell(Q)^{2n-s+2\varepsilon} \lesssim \HH^{2n-s+2\varepsilon}_\infty(H')\lesssim_\varepsilon M^{-2}\|\nu_L\|^2_{H^\sigma(\R^{n})}.
 \end{equation}
 Without loss of generality, we may assume that every $Q\in\cQ$ intersects $H'$. 
 
 For every $Q\in\cQ$ let $Q'\in\mathbb{D}_{\R^n}$ be the smallest cube with ${Q}'\supset Q$ and such that ${Q}'\cap (H')^c\neq\varnothing$, that is, ${Q}'$ is not contained in $H'$. Of course, it may happen that $Q'=Q$. Let $\cP\subset\mathbb{D}_{\R^n}$ be the family of maximal dyadic cubes from $\{Q'\}_{Q\in\cQ}$. Since $\cQ$ is a covering of $H'$, the family $\cP$ is also a covering of $H'$. Note that the cubes in $\cP$ are pairwise disjoint.
 
 
 
 
 \begin{lemma}\label{eq:4}
 	We have
 	\begin{equation}\label{eq:6}
 	\sum_{P\in\cP}\ell(P)^{2n-s+2\varepsilon}\lesssim_\varepsilon M^{-2}\|\nu_L\|^2_{H^\sigma(\R^{n})}.
 	\end{equation}		
 \end{lemma}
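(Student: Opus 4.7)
My plan is to split $\cP$ into the part $\cP_1:=\cP\cap\cQ$ that overlaps with $\cQ$ and the remainder $\cP_2:=\cP\setminus\cQ$, and to charge each cube $P\in\cP_2$ to a disjoint block of cubes in $\cQ$. The bound on $\cP_1$ is immediate because $\cP_1\subset\cQ$ gives $\sum_{P\in\cP_1}\ell(P)^{2n-s+2\varepsilon}\le\sum_{Q\in\cQ}\ell(Q)^{2n-s+2\varepsilon}$, so the target inequality for this part follows from \eqref{eq:Qcoveringdef}. The heart of the argument is the treatment of $\cP_2$.

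For $P\in\cP_2$ I want to identify a specific dyadic child $P^{-}$ of $P$ sitting inside $H'$. Picking any $Q\in\cQ$ with $Q'=P$, the assumption $P\notin\cQ$ forces $Q\subsetneq P$, and then $Q\subset H'$ (otherwise we would have $Q'=Q$). The minimality of $Q'$ as a dyadic ancestor of $Q$ not contained in $H'$ then forces the child $P^{-}$ of $P$ containing $Q$ to lie in $H'$, with $\ell(P^{-})=\ell(P)/2$. A short case analysis next shows that any $Q^\ast\in\cQ$ meeting $P^{-}$ must in fact lie inside $P^{-}$: dyadic nesting leaves the only alternative $Q^\ast\supsetneq P^{-}$, but $Q^\ast=P$ is excluded because $P\notin\cQ$, and any strict dyadic ancestor $Q^\ast\supsetneq P$ would produce $(Q^\ast)'\supseteq Q^\ast\supsetneq P$ inside $\{Q':Q\in\cQ\}$, contradicting the maximality of $P\in\cP$. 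Hence $\cQ_P:=\{Q^\ast\in\cQ:Q^\ast\subset P^{-}\}$ is a disjoint dyadic cover of $P^{-}$, and the families $\cQ_P$ for distinct $P\in\cP_2$ are pairwise disjoint subfamilies of $\cQ$, because the children $P^{-}$ are pairwise disjoint.

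Once this is in place, the remaining step is a Jensen/subadditivity estimate. Writing $t:=2n-s+2\varepsilon$, the hypothesis $\varepsilon<(s-n)/2$ gives $t<n$, so $x\mapsto x^{t/n}$ is concave on $[0,\infty)$ with value $0$ at the origin, hence subadditive. Combining this with the volume bound $\sum_{Q^\ast\in\cQ_P}\ell(Q^\ast)^n\ge\ell(P^{-})^n$ I obtain
\[
\ell(P)^{t}\lesssim\ell(P^{-})^{t}=\bigl(\ell(P^{-})^n\bigr)^{t/n}\le\Bigl(\sum_{Q^\ast\in\cQ_P}\ell(Q^\ast)^n\Bigr)^{t/n}\le\sum_{Q^\ast\in\cQ_P}\ell(Q^\ast)^{t}.
\]
Summing over $P\in\cP_2$, exploiting the disjointness of the $\cQ_P$'s, and adding the trivial contribution from $\cP_1$ produces $\sum_{P\in\cP}\ell(P)^{t}\lesssim\sum_{Q\in\cQ}\ell(Q)^{t}$, at which point \eqref{eq:Qcoveringdef} closes the argument. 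The only delicate point I expect is the structural claim that no $\cQ$-cube strictly containing $P^{-}$ can meet $P^{-}$; everything else is bookkeeping.
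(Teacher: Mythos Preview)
Your proof is correct and follows essentially the same route as the paper: reduce to $\sum_{P\in\cP}\ell(P)^{2n-s+2\varepsilon}\lesssim\sum_{Q\in\cQ}\ell(Q)^{2n-s+2\varepsilon}$, split $\cP$ into $\cP\cap\cQ$ and $\cP\setminus\cQ$, find for each $P\in\cP\setminus\cQ$ a child $P^{-}\subset H'$ (the paper calls it $Q''$), use that $\cQ$ covers $P^{-}$, and finish with the subadditivity of $x\mapsto x^{t/n}$ for $t=2n-s+2\varepsilon\in(0,n)$. Your explicit case analysis showing that no $Q^\ast\in\cQ$ can strictly contain $P^{-}$ is in fact a point the paper glosses over when it writes $\mathcal{L}^n(Q'')=\sum_{R\in\cQ,\,R\subset Q''}\mathcal{L}^n(R)$, so your version is slightly more careful there; otherwise the arguments are the same.
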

 \begin{proof}
 	We claim that
 	\begin{equation*}
 	\sum_{P\in\cP}\ell(P)^{2n-s+2\varepsilon} \lesssim \sum_{Q\in\cQ}\ell(Q)^{2n-s+2\varepsilon}.
 	\end{equation*}
 	The estimate \eqref{eq:6} will then follow immediately from \eqref{eq:Qcoveringdef}.
 	
 	The bound above is trivial for $\cP\cap \cQ$, so it suffices to show
 	\begin{equation}\label{eq:1}
 	\sum_{P\in\cP\setminus\cQ}\ell(P)^{2n-s+2\varepsilon} \lesssim \sum_{Q\in\cQ}\ell(Q)^{2n-s+2\varepsilon}.
 	\end{equation}
 	Fix a cube $P\in \cP\setminus \cQ$. It follows that $P=Q'$ for some $Q\in\cQ$, and $Q\subsetneq Q'$. Since $Q'$ is the smallest ancestor of $Q$ that is not contained in $H'$, there exists a cube $Q''\in\mathbb{D}_{\R^n}$ with $Q\subset Q''\subsetneq Q'$, $\ell(Q'')=\ell(Q')/2$, and $Q''\subset H'$. At the same time, $\cQ$ covers $H'$, and in particular, it covers $Q''$. Thus, denoting by $\cL^n$ the Lebesgue measure on $\R^n$,
 	\begin{equation*}
 	\ell(P)^n=\ell(Q')^n= 2^n\ell(Q'')^n=2^n\cL^n(Q'') = 2^n\sum_{R\in\cQ,\, R\subset Q''} \cL^n(R) \lesssim \sum_{R\in\cQ,\, R\subset P} \ell(R)^n.
 	\end{equation*}
 	Recalling that $0<2n-s+2\varepsilon< n$, we use the elementary inequality $|\sum {a_i}|^p \le \sum |a_i|^p$ for $p\in (0,1)$ to get
 	\begin{equation*}
 	\ell(P)^{2n-s+2\varepsilon} = \left(\sum_{R\in\cQ,\, R\subset P} \ell(R)^{n}\right)^{(2n-s+2\varepsilon)/n}
 	\lesssim \sum_{R\in\cQ,\, R\subset P} \ell(R)^{2n-s+2\varepsilon}.
 	\end{equation*}
 	Summing over $P\in\cP\setminus\cQ$, and recalling that $\cP$ is a family of disjoint cubes, gives \eqref{eq:1}.
 \end{proof}
 
 Given a dyadic cube $Q$ and a constant $C\ge 1$, we will write $CQ$ to denote the cube with the same center as $Q$ and with sidelength $C\ell(Q)$.	
 \begin{lemma}
 	For any $P\in{\cP}$ and $C\ge 1$ we have
 	\begin{equation}\label{eq:5}
 	\nu_L(CP)\lesssim_C M\ell(P)^{n}.
 	\end{equation}
 \end{lemma}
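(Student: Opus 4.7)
The key feature of the construction is that every $P\in\cP$ satisfies $P\not\subset H'$. Indeed, $\cP$ was obtained by taking the maximal cubes in $\{Q'\}_{Q\in\cQ}$, where each $Q'$ was defined as the smallest dyadic ancestor of $Q$ with $Q'\cap (H')^c\neq\varnothing$. Thus there exists a point $x_0\in P$ with $x_0\notin H'$, i.e.\ $\cM\nu_L(x_0)<M$.

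The rest of the proof is immediate: I will simply engulf $CP$ in a ball centered at $x_0$ and use the bound on the maximal function at that point. Concretely, the cube $CP$ has the same center as $P$, and $x_0$ lies in $P$, so
\begin{equation*}
CP\subset B\Bigl(x_0,\tfrac{(C+1)\sqrt{n}}{2}\,\ell(P)\Bigr)\subset B\bigl(x_0,r\bigr),\qquad r\coloneqq C\sqrt{n}\,\ell(P),
\end{equation*}
valid for every $C\ge 1$. Therefore
\begin{equation*}
\nu_L(CP)\le \nu_L(B(x_0,r))\le \cM\nu_L(x_0)\cdot r^{n}< M\cdot (C\sqrt{n})^n\,\ell(P)^n\lesssim_C M\,\ell(P)^n,
\end{equation*}
which is precisely \eqref{eq:5}.

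There is no real obstacle here: the lemma is a one-line consequence of the definition of $\cP$ combined with the definition of the Hardy–Littlewood maximal function. The only point requiring a modicum of care is verifying that $CP$ is contained in a ball of radius comparable to $\ell(P)$ centered at the chosen witness point $x_0$, which follows from elementary geometry of cubes.
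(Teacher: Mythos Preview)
Your proof is correct and follows exactly the same approach as the paper: both observe that $P\not\subset H'$ by construction of $\cP$, pick a witness point $x_0\in P\setminus H'$, engulf $CP$ in a ball $B(x_0,C'\ell(P))$ with $C'\sim_C 1$, and apply the maximal function bound $\cM\nu_L(x_0)<M$. The only cosmetic difference is that the paper leaves the radius implicit as ``$C'\sim C$'' while you write out $C\sqrt{n}\,\ell(P)$ explicitly.
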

 \begin{proof}
 	Let $P\in{\cP}$. Then, by the definition of $\cP$, the cube $P$ is not contained in $H'$. Let $x\in P\setminus H'$, and let $C'\sim C$ be such that $B(x,C'\ell(P))\supset CP$. Then,
 	\begin{equation*}
 	\frac{\nu_L(CP)}{\ell(P)^{n}}\le (C')^n\frac{\nu_L(B(x,C'\ell(P))}{(C'\ell(P))^{n}}\le (C')^n M,
 	\end{equation*}
 	where the last inequality follows from the fact that $\cM\nu_L(x)\le M$.
 \end{proof}
 
 We are ready to conclude the proof of \propref{prop:heavypart}. We remark that until this point we never used lower Ahlfors regularity of measure $\nu$, but now it will be crucial.
 \begin{proof}[Proof of \propref{prop:heavypart}]		
 	For every $P\in\cP$ let $\DD_P\subset\mathbb{D}$ be the family of dyadic cubes $Q\subset [0,1]^d$ with $\ell(Q)=\ell(P),\ \pi_L(Q)\cap P\neq\varnothing,$ and $F\cap Q\neq\varnothing$. Since $F_M=F\cap \pi_L^{-1}(H')$, and $H'\subset \bigcup_{P\in\cP} P$, we get that
 	\begin{equation*}
 	F_M\subset \bigcup_{P\in\cP}\bigcup_{Q\in\mathcal{D}_P}Q.
 	\end{equation*}		
 	Note that, since $\nu$ is $s$-Ahlfors regular, we have $\nu(3Q)\sim \ell(Q)^s$ for all $Q\in\DD_P$, the cubes $\{3Q\}_{Q\in\DD_P}$ have bounded intersections, and they satisfy $\pi_L(3Q)\subset CP$ for some dimensional constant $C\ge 1$. It follows that
 	\begin{multline*}
 	\HH^{n+2\varepsilon}_\infty(F_M)\lesssim \sum_{P\in\cP} \sum_{Q\in \mathcal{D}_P}\ell(Q)^{n+2\varepsilon}\sim \sum_{P\in\cP}\ell(P)^{n+2\varepsilon-s} \sum_{Q\in \mathcal{D}_P}\ell(Q)^{s}\\
 	\sim \sum_{P\in\cP}\ell(P)^{n+2\varepsilon-s} \sum_{Q\in \mathcal{D}_P}\nu(3Q)
 	\lesssim \sum_{P\in\cP}\ell(P)^{n+2\varepsilon-s} \nu_L(CP)\\
 	\overset{\eqref{eq:5}}{\lesssim}  M\sum_{P\in\cP}\ell(P)^{2n+2\varepsilon-s}\overset{\eqref{eq:6}}{\lesssim_\varepsilon} M^{-1}\|\nu_\theta\|^2_{H^\sigma(\R^{n})}.
 	\end{multline*}
 \end{proof}
 \thmref{thm:main3} follows easily from \propref{prop:heavypart}.
 \begin{proof}[Proof of \thmref{thm:main3}]
 	Set $\nu=\HH^s|_F$, let $M> 3^n\nu(F)$ be a large integer. Given $L\in \cG(d,n)$ define
 	\begin{equation*}
 	F_M = \{x\in F\ :\ \cM\nu_L(\pi_L(x))\ge M\}.
 	\end{equation*} 
 	
 	Let $x\in F$ be such that the $(d-n)$-plane $V=V_{x,L}=x+L^\perp$ satisfies $\overline{\dim}_{\mathrm{box}}(V\cap F)>s-n$. We claim that $V\cap F\subset F_M$.		
 	Indeed, since $\overline{\dim}_{\mathrm{box}}(V\cap F)>s-n$, there exists $\beta>0$ and a sequence $\delta_m\to 0$ such that
 	\begin{equation*}
 	N(F\cap V,\delta_m)\ge \delta_m^{-s+n-\beta}.
 	\end{equation*}
 	Since $\nu$ is $s$-Ahlfors regular, it follows that
 	\begin{equation*}
 	\nu(V(2\delta_m))\gtrsim \delta_m^{-s+n-\beta}\cdot \delta_m^s = \delta_m^{n-\beta},
 	\end{equation*}
 	where $V(2\delta_m)$ is the $2\delta_m$-neighbourhood of $V$.
 	Hence, for $x\in V$ we have
 	\begin{equation*}
 	\cM\nu_L(\pi_L(x))\ge \frac{\nu_L(B(\pi_L(x),2\delta_m))}{(2\delta_m)^{n}}\gtrsim \delta_m^{-\beta}.
 	\end{equation*}
 	If $\delta_m$ is chosen small enough, we get $x\in F_M$.
 	
 	The argument above implies that $\{x\in F\ : \ \overline{\dim}_{\mathrm{box}}(F\cap V_{x,L})> s-n\}\subset F_M$. In the case $s>2n$, it follows from \lemref{lem:sobolevest} and the Sobolev embedding that $\nu_L\in L^\infty(\R^n)$ for $\gamma_{d,n}$-a.e. $L\in\cG(d,n)$, so that $\cM\nu_L$ is a bounded function. Hence, taking $M$ large enough, we get
 	\begin{equation}\label{eq:17}
 	\{x\in F\ : \ \overline{\dim}_{\mathrm{box}}(F\cap V_{x,L})> s-n\}\subset F_M=\varnothing.
 	\end{equation}
 	This gives \eqref{eq:slicing2}.

 	On the other hand, if $n<s\le 2n$, then by \propref{prop:heavypart}
 	\begin{equation*}
 	\HH_\infty^{n+2\varepsilon}(\{x\in F\ : \ \overline{\dim}_{\mathrm{box}}(F\cap V_{x,L})> s-n\})\lesssim_{\varepsilon} M^{-1}\|\nu_L\|^2_{H^\sigma(\R^{n})}.
 	\end{equation*}
 	Taking $M\to\infty$ gives for all $L\in\cG(d,n)$ such that $\|\nu_L\|^2_{H^\sigma(\R^{n})}<\infty$ (which is true for a.e. $L\in\cG(d,n)$)
 	\begin{equation*}
 	\HH_\infty^{n+2\varepsilon}(\{x\in F\ : \ \overline{\dim}_{\mathrm{box}}(F\cap V_{x,L})> s-n\})=0.
 	\end{equation*}
 	Letting $\varepsilon\to 0$ finishes the proof.
 \end{proof}
 
 \section{Decomposition of the visible part}
 We begin the proof of \thmref{thm:main}. Let $E\subset \R^d$ be an $s$-Ahlfors regular set, with $s\in (d-1, d]$. We set $\mu = \HH^s|_{E}$. By rescaling and translating, we may assume that $\diam(E)\sim 1\sim\mu(E)$ and $E\subset [0,1]^d$.
 \subsection{Parameters, cubes, and tubes}\label{sec:params}
 Let $\alpha\in (0,1)$ (in the end we will take $\alpha=1-\sqrt{6}/3$), let $\varepsilon>0$ be a small constant, and set
 \begin{equation}\label{eq:deftau}
 \tau \coloneqq \alpha(s-d+1)-5\varepsilon.
 \end{equation}
 Our goal is to prove that
 \begin{equation}\label{eq:goal}
 \HH_\infty^{s-\tau}(\vis_\theta(E))=0\quad\text{for a.e. $\theta\in\TT$}.
 \end{equation}
 Taking $\varepsilon\to 0$, the estimate \eqref{eq:dimest} will follow.
 
 Fix a small dyadic scale $\delta\in 2^{-\mathbb{N}}$. In the proof we will often assume without further mention that $\delta$ is small enough depending on $\varepsilon$. 
 
 We are going to show the following.
 \begin{prop}\label{prop:main}
 	For $\alpha=1-\sqrt{6}/3$ we have
 	\begin{equation*}
 	\int_{\mathbb{S}^{d-1}} \HH_\infty^{s-\tau}(\vis_\theta(E))\, d\theta \lesssim_{\varepsilon} \delta^{\varepsilon}.
 	\end{equation*}
 \end{prop}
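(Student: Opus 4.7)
The plan is to follow the bad-tube / good-tube decomposition invented in \cite{orponen2022visible}, but to keep the Sobolev norm $\|\mu_\theta\|_{H^\sigma(\R^{d-1})}^2$ (with $\sigma=(s-d+1-\varepsilon)/2$) explicit in every pointwise-in-$\theta$ estimate, and to average against the direction only at the very end via $\int_\TT\|\mu_\theta\|_{H^\sigma}^2\,d\theta\lesssim_\varepsilon 1$ from \lemref{lem:sobolevest}. The crucial new ingredient is that \propref{prop:heavypart} (applied with $n=d-1$) will be used to strengthen both halves of the decomposition, since Ahlfors regularity forces most slices $E\cap\ell$ to have dimension at most $s-d+1$ rather than the generic~$1$.

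First I would fix $M=\delta^{-\kappa}$ with $\kappa>0$ to be optimised and consider the maximal-function set $F_M=\{x\in E:\cM\mu_\theta(\pi_\theta(x))\ge M\}$. The intermediate step in the proof of \propref{prop:heavypart} (namely \lemref{lem:heavyaux} combined with the one-third trick) supplies the projected estimate
\[
\HH_\infty^{2(d-1)-s+2\varepsilon}\bigl(\{y\in\R^{d-1}:\cM\mu_\theta(y)\ge M\}\bigr)\lesssim_\varepsilon M^{-2}\|\mu_\theta\|_{H^\sigma}^2.
\]
Call a $\delta$-tube $T$ parallel to $\theta$ \emph{bad} if $\mu_\theta(\pi_\theta(T))\ge M\delta^{d-1}$ and \emph{good} otherwise, and write $L_B,L_G$ for the respective unions. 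For the bad contribution I would cover $\pi_\theta(L_B)$ efficiently at exponent $2(d-1)-s+2\varepsilon$, lift each cover ball of radius $r_i$ to a tube of length $\sim 1$ (covered by $r_i^{-1}$ balls of radius $r_i$ in $\R^d$), and after careful $\delta$-scale bookkeeping arrive at a bound $\HH_\infty^{s-\tau}(\vis_\theta(E)\cap L_B)\lesssim_\varepsilon \delta^{A(\kappa,s)}\|\mu_\theta\|_{H^\sigma}^2$ for an explicit exponent $A$ that is increasing in~$\kappa$.

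For the good contribution the naive per-tube estimate $N(E\cap T,\delta)\lesssim M\delta^{d-1-s}$ coming from Ahlfors regularity alone is too weak. The improvement combines the slice-dimension output of \propref{prop:heavypart} (most lines $\ell$ satisfy $\dimh(E\cap\ell)\le s-d+1$, hence $N(E\cap\ell,\delta)\lesssim\delta^{-(s-d+1+\varepsilon)}$) with the elementary visibility observation that each connected component of $E\cap\ell$ contributes at most one point to $\vis_\theta(E)\cap\ell$, giving a sharpened per-tube covering number for $\vis_\theta(E)\cap T$. Summing over the $\lesssim\delta^{-(d-1)}$ good tubes and multiplying by $\delta^{s-\tau}$ gives a bound $\lesssim\delta^{B(\kappa,s)}$, with $B$ decreasing in $\kappa$; setting $A(\kappa,s)=B(\kappa,s)$ produces a quadratic in $\kappa$ whose positive root, in the worst regime $s\to d-1$, forces $\alpha=1-\sqrt{6}/3$. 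Integrating the resulting pointwise estimate over $\theta\in\TT$ against \lemref{lem:sobolevest} then upgrades it to the statement of the proposition.

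The most delicate step I anticipate is the sharp good-tube covering estimate for $\vis_\theta(E)\cap T$: translating the slice-dimension bound of \propref{prop:heavypart} (which is a priori a statement about individual lines) into a uniform bound for the $\delta$-covering number of the visible part inside every density-good tube seems to require a careful two-scale argument in which Ahlfors regularity, the slice-dimension bound, and the visibility condition must all be used together. The specific value $\sqrt{6}/3$ emerges from the quadratic optimisation between $A$ and $B$, and is only attainable because \propref{prop:heavypart} simultaneously improves both halves of the decomposition.
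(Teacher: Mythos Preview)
Your high-level plan --- keep $\|\mu_\theta\|_{H^\sigma}^2$ explicit in all pointwise-in-$\theta$ estimates and average only at the end, and feed \propref{prop:heavypart} into both halves of the decomposition --- matches the paper's strategy. However, the specific decomposition you describe has a genuine gap in the good-tube half, and it is not the decomposition the paper actually uses.

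The problem is your proposed good-tube estimate. You write that you will combine the slice bound $N(E\cap\ell,\delta)\lesssim\delta^{-(s-d+1+\varepsilon)}$ with ``the elementary visibility observation that each connected component of $E\cap\ell$ contributes at most one point to $\vis_\theta(E)\cap\ell$''. But this observation is vacuous here: $\vis_\theta(E)\cap\ell$ is \emph{always} at most one point (the $\theta$-highest point of $E\cap\ell$), regardless of the dimension of $E\cap\ell$. So the slice-dimension bound on individual lines, by itself, tells you nothing about $N(\vis_\theta(E)\cap T,\delta)$ for a $\delta$-tube $T$: the single visible point on each of the lines $\ell\subset T$ can sit at $\sim\delta^{-1}$ different heights, and nothing in your argument rules this out. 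You correctly flag this step as ``the most delicate'', but the mechanism you sketch does not work.

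What the paper actually does is a genuinely two-scale argument with an intermediate dyadic scale $\Delta\sim\delta^{\kappa}$ (not merely a density threshold $M=\delta^{-\kappa}$). The decomposition has \emph{four} pieces rather than two: light $\delta$-tubes, a heavy part $E_H$ (tubes at any scale with anomalously many $E$-cubes, and $\delta$-tubes which are heavy inside some $\Delta$-cube --- this is where \propref{prop:heavypart} is invoked, via \lemref{lem:heavyest}), a bad part $E_B$, and the good part $E_G$. The key notion you are missing is that of a \emph{bad line with respect to a $\Delta$-cube $Q$}: a line $\ell$ whose ambient $\delta$-tube substantially intersects $E\cap Q$ (in the sense \eqref{eq:defsubstinter}) but with $\ell\cap E\cap\overline{3Q}=\varnothing$. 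The content of the bad-line set is controlled by a Plancherel argument (\lemref{lem:contentprojection}), and then --- crucially --- the covering of $E_B$ at exponent $s-\tau$ (rather than the crude exponent you would get by lifting tubes) is obtained in \lemref{lem:badest} by using that $E_B$ is disjoint from $E_H$, so each covering tube is non-heavy and hence contains $\lesssim\delta^{-2\varepsilon}\ell(P)^{-(s-d+1)}$ cubes of $E$. For the good part (\lemref{lem:coveringgood}), one picks for each non-light $\delta$-tube $T$ the $\theta$-highest $\Delta$-cube $Q_T$ that $T$ substantially intersects; on a good line $\ell\subset T$ one has $\ell\cap E\cap\overline{3Q_T}\neq\varnothing$, so $\vis_\theta(E)\cap\ell$ cannot lie $\theta$-below $Q_T$. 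This localises $\vis_\theta(E)\cap E_G\cap T$ to $Q_T$ itself (controlled because $T$ is not heavy-in-$Q_T$) and the $\Delta$-cubes above (each of which $T$ does \emph{not} substantially intersect). The optimisation is then $\kappa=\alpha/(1-\alpha)$ together with $2\kappa+3\alpha\le1$, and the extremal $\alpha$ solving $-3\alpha^2+6\alpha-1=0$ is $1-\sqrt6/3$.
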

 
 Taking $\delta\to 0$, this will give \eqref{eq:goal}.
 
 Let $\Delta\in 2^{-\mathbb{N}}$ be the dyadic number satisfying
 \begin{equation*}
 \delta^\kappa< \Delta \le 2\delta^\kappa,
 \end{equation*}
 where $\kappa\in(0,1)$ is a constant to be fixed later. In the end we will choose $\kappa = \alpha/(1-\alpha)$. Note that $\Delta \gg \delta$.
 
 Recall that $\mathbb{D}$ stands for the dyadic cubes in $\R^d$. We set
 \begin{align*}
 \DD &\coloneqq \{Q\in\mathbb{D} : Q\cap E\neq\varnothing\},\\
 \DD_\delta &\coloneqq \{Q\in\DD : \ell(Q)=\delta\},\\
 \DD_\Delta &\coloneqq \{Q\in\DD : \ell(Q)=\Delta\}.
 \end{align*}
 Since $E$ is $s$-Ahlfors regular, we have $\#\DD_\delta\sim \delta^{-s}$ and $\#\DD_\Delta\sim \delta^{-\kappa s}$. 
 
 For every $Q\in\DD_\Delta$ we define also
 \begin{equation*}
 \mu_Q = \mu|_{3Q}.
 \end{equation*}
 Note that $\mu_Q(\R^d)=\mu(3Q)\sim \ell(Q)^s\sim \delta^{\kappa s}$.
 
 Recall that $\mu_\theta$ denotes the push-forward of $\mu$ by the orthogonal projection $\pi_\theta$, so that it is a measure on $\theta^\perp\simeq \R^{d-1}$.
 
 Let $\sigma = (s-d+1-\varepsilon)/2$. By \lemref{lem:sobolevest} we have 
 \begin{equation}\label{eq:sobolevestmu}
 \int_{\mathbb{S}^{d-1}}\|\mu_\theta\|^2_{H^\sigma(\R^{d-1})}\, d\theta\lesssim_{\varepsilon} 1,
 \end{equation}
 and for every $Q\in\DD_\Delta$
 \begin{equation}\label{eq:sobolevestmuQ}
 \int_{\mathbb{S}^{d-1}}\|\mu_{Q,\theta}\|^2_{H^\sigma(\R^{d-1})}\, d\theta\lesssim_{\varepsilon} \mu(3Q)\sim \delta^{\kappa s}.
 \end{equation}
 
 Given $\theta\in\mathbb{S}^{d-1}$ and a dyadic parameter $\gamma\in 2^{-\mathbb{N}}$, let $\T_{\gamma,\theta}$ be a family of (approximate) tubes given by
 \begin{equation*}
 \T_{\gamma,\theta}\coloneqq\{T\ :\ T=\pi_\theta^{-1}(Q),\ Q\in\mathbb{D}_{\R^{d-1}},\  \ell(Q)=\gamma,\ T\cap [0,1]^d\neq\varnothing\},
 \end{equation*}	
 where $\mathbb{D}_{\R^{d-1}}$ denotes the standard dyadic cubes on $\R^{d-1}$. When the direction $\theta$ is clear from context, we will simply write $\T_{\gamma}$.
 
 Note that the tubes from $\T_{\gamma,\theta}$ have width $\sim\gamma$, they are parallel to $\theta$, they cover $[0,1]^d$, and $\#\T_{\gamma,\theta}\sim \gamma^{1-d}$. 
 
 We set $\T_\theta=\bigcup_\gamma\T_{\gamma,\theta}$, and for $T\in\T_\theta$ we denote by $w(T)$ the width of $T$, i.e. the unique $\gamma$ such that $T\in\T_{\gamma}$.
 
 Given a tube $T=\pi_\theta^{-1}(Q)\in \T_{\gamma,\theta}$ and a constant $C>1$, we will write $CT$ to denote $\pi_\theta^{-1}(CQ)$.
 
 \vspace{1em}
 
 Let $\theta\in\mathbb{S}^{d-1}$. We will divide $\vis_\theta(E)$ into four parts. The Hausdorff content of each will be estimated separately. 
 
 \subsection{Heavy tubes}	
 Since $E$ is an $s$-Ahlfors regular set, 
 we expect that for a typical tube $T\in\T_\gamma=\T_{\gamma,\theta}$
 \begin{equation*}
 N(T\cap E,\gamma)\lesssim \gamma^{-(s-d+1)}.
 \end{equation*}
 We are going to bound the number of exceptional tubes where the estimate above fails badly.
 
 We say that $T\in\T_\theta$ is \emph{heavy}, denoted by $\T_{H}$, if
 \begin{equation}\label{eq:heavytube}
 N(T\cap E,\gamma)\ge\delta^{-2\varepsilon}\gamma^{-s+d-1},
 \end{equation}
 where $\gamma=w(T)$.
 
 Let $Q\in\DD_\Delta$. We will say that $T\in\T_\delta$ is \emph{heavy inside $Q$}, denoted by $\T_{H,Q}$, if
 \begin{equation}\label{eq:heavytube2}
 N(T\cap E\cap Q,\delta)\ge\delta^{(\kappa-1)(s-d+1)-\kappa\tau - 4\varepsilon}.
 \end{equation}
 Remark that the definition of $\T_H$ includes tubes of varying widths, whereas the tubes in $\T_{H,Q}$ are all $\delta$-tubes.
 
 We define
 \begin{equation*}
 E_{H}' \coloneqq E\cap\bigcup_{T\in\T_H}T,
 \end{equation*}
 and for each $Q\in\DD_\Delta$ we define
 \begin{equation*}
 Q_H\coloneqq 3Q\cap E\cap \bigcup_{T\in\T_{H,Q}}T.
 \end{equation*}	
 Finally, we set
 \begin{equation*}
 E_H \coloneqq E_{H}'\cup \bigcup_{Q\in\DD_\Delta} Q_{H}.
 \end{equation*}
 
 \begin{remark}
 	Note that the definitions above depend on the direction $\theta$. To simplify notation we usually suppress this dependence, but at times we will write $E_{H,\theta}$ instead of $E_H$. The same applies to other sets defined in this section.
 \end{remark}

 \subsection{Light tubes} We will say that a tube $T\in\T_\delta$ is \emph{light} if
 \begin{equation*}
 N(T\cap E,\delta)\le \delta^{-(s-d+1)+\tau+\varepsilon}.
 \end{equation*}
 We denote the family of light tubes by $\T_L$, and we set
 \begin{equation*}
 E_L = E\cap\bigcup_{T\in\T_L}T.
 \end{equation*}
 
 \begin{lemma}\label{lem:lightpart}
 	We have 
 	\begin{equation*}
 	\HH^{s-\tau}_\infty(E_L)\lesssim \delta^{\varepsilon}.
 	\end{equation*}
 \end{lemma}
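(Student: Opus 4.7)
The plan is direct: the lightness condition is already a $\delta$-covering estimate, so the Hausdorff content bound reduces to a straightforward counting argument with no analytic subtlety.

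First, I would record the two quantitative facts that drive the estimate. The family $\T_\delta$ consists of tubes $\pi_\theta^{-1}(Q)$ with $Q\in\mathbb{D}_{\R^{d-1}}$ of sidelength $\delta$ meeting $[0,1]^d$, so $\#\T_\delta\lesssim \delta^{1-d}$. By definition of a light tube, for each $T\in\T_L$ there is a cover of $T\cap E$ by at most $\delta^{-(s-d+1)+\tau+\varepsilon}$ sets of diameter $\lesssim \delta$.

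Next I would assemble these local covers into a global cover of $E_L$. Using $E_L=\bigcup_{T\in\T_L} (T\cap E)$ and the subadditivity of $\HH^{s-\tau}_\infty$, each element of the cover contributes $\delta^{s-\tau}$ to the content, giving
\begin{equation*}
\HH^{s-\tau}_\infty(E_L)\;\lesssim\;\sum_{T\in\T_L} N(T\cap E,\delta)\,\delta^{s-\tau}\;\le\;\#\T_\delta\cdot \delta^{-(s-d+1)+\tau+\varepsilon}\cdot\delta^{s-\tau}.
\end{equation*}
Plugging in $\#\T_\delta\lesssim\delta^{1-d}$ and simplifying, the exponents collapse to
\begin{equation*}
\delta^{(1-d)+(-(s-d+1)+\tau+\varepsilon)+(s-\tau)}=\delta^{\varepsilon},
\end{equation*}
which is the claimed bound.

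There is essentially no obstacle here: the definition of light tube was designed so that this counting is tight, and in particular no information about the direction $\theta$ or about the Ahlfors regular measure $\mu$ is required beyond $\#\T_\delta\lesssim \delta^{1-d}$. The nontrivial work is saved for the heavy tubes, treated elsewhere in the argument.
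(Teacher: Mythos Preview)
Your proof is correct and essentially identical to the paper's own argument: both bound $\HH^{s-\tau}_\infty(E_L)$ by summing $N(T\cap E,\delta)\cdot\delta^{s-\tau}$ over $T\in\T_L$, using $\#\T_L\le\#\T_\delta\lesssim\delta^{1-d}$ and the lightness bound, and the exponents collapse to $\delta^{\varepsilon}$.
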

 \begin{proof}
 	Observe that
 	\begin{equation*}
 	N(E_L,\delta) \le \sum_{T\in\mathcal{T}_L}N(T\cap E,\delta)\lesssim \delta^{1-d}\cdot \delta^{-s+d-1+\tau+\varepsilon}=\delta^{-s+\tau+\varepsilon},
 	\end{equation*}
 	which gives
 	\begin{equation*}
 	\HH_\infty^{s-\tau}(E_L)\le N(E_L,\delta)\cdot \delta^{s-\tau}\lesssim \delta^{\varepsilon}.
 	\end{equation*}
 \end{proof}
 
 \subsection{Good and bad parts}
 Let $Q\in\DD_\Delta$. We will say that a tube $T\in\T_{\delta}$ \emph{substantially intersects $Q$}, denoted by $T\in\T(Q)$, if
 \begin{equation}\label{eq:defsubstinter}
 N(T\cap E\cap Q,\, \delta)\ge \delta^{(\kappa-1)(s-d+1) + \tau+ 4\varepsilon}.
 \end{equation}
 The following lemma explains where the exponent $(\kappa-1)(s-d+1) + \tau+ 4\varepsilon$ came from.
 \begin{lemma}\label{lem:normaltubes}
 	Let $T\in\T_{\delta}$ be a tube which is not contained in any tube from $\T_L\cup\T_H$. Then, there exists $Q\in\DD_{\Delta}$ such that $T\in\T(Q)$.
 \end{lemma}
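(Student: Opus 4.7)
The plan is to exploit the failure of the heavy-tube condition at scale $\Delta$ to restrict how many $\Delta$-cubes $T\cap E$ can meet, and then pigeonhole the lower bound coming from $T\notin\T_L$ over those cubes.

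First, I would write $T = \pi_\theta^{-1}(q)$ for a $(d-1)$-dimensional dyadic cube $q$ of sidelength $\delta$, and let $q'$ be its dyadic ancestor of sidelength $\Delta$, so that $T\subset T'$ where $T':= \pi_\theta^{-1}(q')\in\T_\Delta$. By hypothesis $T$ is not contained in any tube from $\T_H$, so in particular $T'\notin\T_H$; applying \eqref{eq:heavytube} at scale $\Delta$ together with $\Delta\ge \delta^{\kappa}$ (and $s-d+1>0$) yields
$$N(T'\cap E,\Delta) < \delta^{-2\varepsilon}\Delta^{-(s-d+1)} \le \delta^{-\kappa(s-d+1)-2\varepsilon}.$$
Simultaneously $T\notin \T_L$ gives $N(T\cap E,\delta) > \delta^{-(s-d+1)+\tau+\varepsilon}$.

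Next, let $\DD_\Delta(T):=\{Q\in\DD_\Delta : Q\cap T\cap E\neq \varnothing\}$. Since $T\subset T'$, every such $Q$ also meets $T'\cap E$, hence $\#\DD_\Delta(T)\lesssim N(T'\cap E,\Delta)$. The cubes in $\DD_\Delta(T)$ partition $T\cap E$ (mod boundaries, which contribute a measure-zero set and only a dimensional constant), so
$$N(T\cap E,\delta)\lesssim \sum_{Q\in\DD_\Delta(T)} N(T\cap E\cap Q,\delta),$$
and a pigeonhole over $\DD_\Delta(T)$ produces some $Q\in\DD_\Delta(T)$ with
$$N(T\cap E\cap Q,\delta) \gtrsim \frac{N(T\cap E,\delta)}{\#\DD_\Delta(T)} \gtrsim \delta^{(\kappa-1)(s-d+1)+\tau+3\varepsilon}.$$
Since the exponent in \eqref{eq:defsubstinter} is $(\kappa-1)(s-d+1)+\tau+4\varepsilon$, the extra factor $\delta^{\varepsilon}$ absorbs the dimensional constant once $\delta$ is small enough in terms of $\varepsilon$, giving $T\in\T(Q)$.

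The argument is essentially a one-shot pigeonhole, so I do not anticipate any real difficulty; the two points requiring care are simply (i) that the hypothesis on $T$ not being contained in a heavy tube of any width is used specifically at its unique $\Delta$-parent $T'$, and (ii) that the heavy-tube failure $T'\notin\T_H$ controls the number of $\Delta$-cubes that $T$ can possibly cross, which is exactly what makes the pigeonhole productive.
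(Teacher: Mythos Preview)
Your proof is correct and follows essentially the same argument as the paper: both use that the $\Delta$-parent $T'$ of $T$ is not heavy to bound $N(T\cap E,\Delta)$, combine this with $T\notin\T_L$, and then pigeonhole over the $\Delta$-cubes meeting $T\cap E$ to find a $Q$ satisfying \eqref{eq:defsubstinter}. Your write-up is slightly more explicit about the pigeonhole step and about why the extra $\delta^{\varepsilon}$ absorbs the implicit constant, but there is no substantive difference.
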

 \begin{proof}
 	Let $T'\in\T_{\Delta}$ be such that $T\subset T'$. By our assumption, $T'\notin\T_H$, which gives
 	\begin{equation*}
 	N(T\cap E,\Delta)\le N(T'\cap E,\Delta)\le \delta^{-2\varepsilon}\Delta^{-s+d-1}\sim \delta^{-\kappa(s-d+1)-2\varepsilon}.
 	\end{equation*}
 	At the same time, since $T\notin\T_{L}$, we have $N(T\cap E,\delta)> \delta^{-(s-d+1)+\tau+\varepsilon}.$ The two inequalities imply that for some $Q\in\DD_{\Delta}$ we have
 	\begin{equation*}
 	N(T\cap E\cap Q,\delta)\gtrsim \frac{N(T\cap E,\delta)}{N(T\cap E,\Delta)}\gtrsim \delta^{(\kappa-1)(s-d+1)+\tau+3\varepsilon}.
 	\end{equation*}
 \end{proof}
 
 Let $\mathcal{L}$ denote the set of affine lines parallel to $\theta$. We say that a line $\ell\in\cL$ is \emph{bad with respect to $Q\in\DD_{\Delta}$} if the $\delta$-tube $T\in\T_{\delta}$ with $\ell\subset T$ satisfies $T\in\T(Q)$, and at the same time
 \begin{equation}\label{eq:defbadline}
 \ell\cap E\cap \overline{3Q}=\varnothing.
 \end{equation}
 We denote the collection of bad lines with respect to $Q$ by $\cL_{Q,B}$, and set
 \begin{equation*}
 L_{Q,B} = \bigcup_{\ell\in\cL_{Q,B}} \ell,\quad L_B=\bigcup_{Q\in\DD_{\Delta}}L_{Q,B}.
 \end{equation*}
 We define the bad part of $E$ as
 \begin{equation*}
 E_B \coloneqq E\cap L_B\setminus E_H,
 \end{equation*}
 and the good part of $E$ as
 \begin{equation*}
 E_G \coloneqq E\setminus (E_B\cup E_H\cup E_L).
 \end{equation*}
 \vspace{1em}
 
 It is clear that
 \begin{equation*}
 \vis_\theta(E) \subset E_H \cup E_L \cup E_B\cup (E_G\cap\vis_\theta(E)).
 \end{equation*}
 In order to estimate the Hausdorff content of $\vis_\theta(E)$, we will estimate the contents of the four sets on the right hand side separately. We have already estimated $\HH^{s-\tau}_\infty(E_L)$ in \lemref{lem:lightpart}. The estimates for $\HH^{s-\tau}_\infty(E_H),$ $\HH^{s-\tau}_\infty(E_B)$, and $\HH^{s-\tau}_\infty(E_G\cap\vis_\theta(E))$ are obtained in the next three sections.
 
 \section{Heavy part}
 Recall that $\cM$ denotes the Hardy-Littlewood maximal function. We define
 \begin{equation*}
 \widetilde{E}_{H}' \coloneqq \{x\in E\ :\ \mathcal{M}\mu_\theta(\pi_\theta(x))\ge \delta^{-\varepsilon}  \},
 \end{equation*}
 and for each $Q\in\DD_\Delta$ we define
 \begin{equation*}
 \widetilde{Q}_H\coloneqq \{ x\in 3Q\cap E\ :\ \mathcal{M}\mu_{Q,\theta}(\pi_\theta(x))\ge \delta^{\kappa (s-d+1)-\kappa\tau-3\varepsilon}  \}.
 \end{equation*}	
 \begin{lemma}\label{lem:heavytube}
 	We have $E_H'\subset \widetilde{E}_H'$ and $Q_H\subset\widetilde{Q}_H$.
 \end{lemma}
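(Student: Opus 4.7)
The plan is to prove both inclusions by the same two-step template. Given a heavy tube $T$ of width $\gamma$ (with $\gamma=w(T)$ for the first inclusion, and $\gamma=\delta$ for the second), I will first convert the hypothesised lower bound on the covering number $N(T\cap E,\gamma)$ into a lower bound on $\mu(3T)$ (respectively $\mu_Q(3T)$), using only the lower $s$-Ahlfors regularity of $\mu$. Pushing this mass through $\pi_\theta$, I will then observe that it sits over the $(d-1)$-cube $\pi_\theta(3T)$ of sidelength $\sim\gamma$ about $\pi_\theta(x)$, so averaging over a ball of radius $\sim\gamma$ centred at $\pi_\theta(x)$ forces the required lower bound on $\cM\mu_\theta$ (respectively $\cM\mu_{Q,\theta}$).

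For the first inclusion, given $x\in E_H'$ lying in some $T\in\T_H$, I will pick a maximal $\gamma$-separated subset $\{x_i\}\subset T\cap E$; by the definition of $\T_H$ there are $N\gtrsim \delta^{-2\varepsilon}\gamma^{d-1-s}$ of them. Since the balls $B(x_i,\gamma/2)$ are pairwise disjoint, contained in $3T$, and each has $\mu$-mass $\gtrsim \gamma^s$ by lower Ahlfors regularity, I obtain $\mu(3T)\gtrsim \delta^{-2\varepsilon}\gamma^{d-1}$. Pushing forward through $\pi_\theta$ and dividing by $\gamma^{d-1}$ (the scale of the enclosing $(d-1)$-ball around $\pi_\theta(x)$) gives $\cM\mu_\theta(\pi_\theta(x))\gtrsim \delta^{-2\varepsilon}\ge \delta^{-\varepsilon}$, the last step using that $\delta^{-\varepsilon}$ swallows the dimensional constant for $\delta$ small.

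The second inclusion runs via the same calculation applied to $\mu_Q=\mu|_{3Q}$ and $T\in\T_{H,Q}\subset \T_\delta$. Starting from $N(T\cap E\cap Q,\delta)\ge \delta^{(\kappa-1)(s-d+1)-\kappa\tau-4\varepsilon}$, I will reach $\mu_Q(3T)\gtrsim \delta^{\kappa(s-d+1)+d-1-\kappa\tau-4\varepsilon}$, and then dividing by $\delta^{d-1}$ produces $\cM\mu_{Q,\theta}(\pi_\theta(x))\gtrsim \delta^{\kappa(s-d+1)-\kappa\tau-4\varepsilon}$; once again one $\varepsilon$ of slack upgrades this to $\delta^{\kappa(s-d+1)-\kappa\tau-3\varepsilon}$, matching the definition of $\widetilde{Q}_H$.

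I do not expect any real obstacle: the lemma is essentially a dictionary between covering-number lower bounds and averaged-measure lower bounds, enabled by lower Ahlfors regularity. The only bookkeeping points are that the $\gamma/2$-balls around the separated centres truly lie inside $3T$ (and inside $3Q$ in the second case, which is automatic because $\delta\ll\Delta=\ell(Q)$), and that the extra $\varepsilon$ built into the definitions of $\T_H$, $\T_{H,Q}$, $\widetilde{E}_H'$, and $\widetilde{Q}_H$ was placed precisely to absorb the dimensional constants coming from the enlargement from $T$ to $3T$ and the passage from cube averages to ball averages in $\cM$.
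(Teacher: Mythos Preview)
Your proposal is correct and follows essentially the same approach as the paper's proof: both convert the covering-number lower bound into a measure lower bound on an enlarged tube via lower $s$-Ahlfors regularity, push forward through $\pi_\theta$, and use the extra $\varepsilon$ of slack to absorb the implicit constants. The only cosmetic differences are that the paper works with $2T$ rather than $3T$ and does not spell out the separated-set argument explicitly.
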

 \begin{proof}
 	Let $x\in E_H'$, so that $x\in T\cap E$ for some $T\in\T$ satisfying \eqref{eq:heavytube}. It follows that for some absolute $C>1$
 	\begin{multline*}
 	\mathcal{M}\mu_\theta(\pi_\theta(x))\ge \frac{\mu_\theta(B(\pi_\theta(x), Cw(T)))}{(Cw(T))^{d-1}}\gtrsim w(T)^{-d+1}\mu(2T)\\
 	\gtrsim w(T)^{-d+1} N(T\cap E,w(T))\cdot w(T)^{s}\overset{\eqref{eq:heavytube}}{\ge} \delta^{-2\varepsilon},
 	\end{multline*}
 	where in the third inequality we used $s$-Ahlfors regularity of $E$. Hence, $x\in \widetilde{E}_H'$.
 	
 	Similarly, if $T\in\T_\delta$ satisfies \eqref{eq:heavytube2}, then for $x\in T\cap Q\cap E$
 	\begin{equation*}
 	\mathcal{M}\mu_{Q,\theta}(\pi_\theta(x))\gtrsim\delta^{-d+1}\mu_Q(2T)\gtrsim \delta^{-d+1} N(T\cap Q\cap E,\delta)\cdot \delta^{s}\ge \delta^{\kappa (s-d+1)-\kappa\tau-4\varepsilon},
 	\end{equation*}
 	which gives $x\in \widetilde{Q}_H$.
 \end{proof}
 
 We use \propref{prop:heavypart} to estimate the size of $E_H$.
 \begin{lemma}\label{lem:heavyest}
 	We have
 	\begin{equation}\label{eq:heavyest}
 	\int_{\mathbb{S}^{d-1}}\HH_\infty^{s-\tau}(E_{H,\theta})\ d\theta\lesssim_{\varepsilon} \delta^{\varepsilon}.
 	\end{equation}
 \end{lemma}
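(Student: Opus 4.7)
By the preceding lemma, $E_H \subset \widetilde{E}_H' \cup \bigcup_{Q\in\DD_\Delta}\widetilde{Q}_H$, where both pieces are described as $\cM$-superlevel sets of projected measures. The plan is thus to apply \propref{prop:heavypart} with $n=d-1$ (which always satisfies $s\le 2n$ for $d\ge 2$) twice, once to each piece, sum over $Q$, integrate over $\theta$, and control the resulting Sobolev norms via \eqref{eq:sobolevestmu}--\eqref{eq:sobolevestmuQ}.

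For $\widetilde{E}_H'$ I would apply the proposition directly to $\nu=\mu$, $L=\theta^\perp$, $M=\delta^{-\varepsilon}$, with $\varepsilon'=\varepsilon$ and $\sigma=(s-d+1-\varepsilon)/2$; the hypothesis $M>3^n\mu(E)\sim 1$ holds for small $\delta$. This yields $\HH^{d-1+2\varepsilon}_\infty(\widetilde{E}_H')\lesssim_\varepsilon \delta^\varepsilon\|\mu_\theta\|^2_{H^\sigma}$. Since $\tau<s-d+1$ gives $d-1+2\varepsilon\le s-\tau$ for small $\varepsilon$ and $\widetilde{E}_H'$ is bounded, the same estimate holds for $\HH^{s-\tau}_\infty$. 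Integration over $\theta$ via \eqref{eq:sobolevestmu} then gives the required $\int\HH^{s-\tau}_\infty(\widetilde{E}_H')\,d\theta\lesssim_\varepsilon \delta^\varepsilon$.

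The more delicate part is $\widetilde{Q}_H$, because $\supp\mu_Q\subset 3Q$ has diameter $\sim\Delta$ rather than $\sim 1$. I would first rescale: set $\psi_Q(x):=(x-x_Q)/\Delta$ and $\tilde\mu_Q:=\Delta^{-s}(\psi_Q)_\#\mu_Q$, so that $\tilde\mu_Q$ is $s$-Ahlfors regular with $\supp\tilde\mu_Q\subset B(0,1)$ and $\tilde\mu_Q(\R^d)\sim 1$. A direct computation gives $\cM\tilde\mu_{Q,\theta}(\tilde y)=\Delta^{n-s}\cM\mu_{Q,\theta}(\Delta\tilde y+\pi_\theta x_Q)$, so $\psi_Q(\widetilde{Q}_H)$ is the $\cM\tilde\mu_{Q,\theta}$-superlevel set at threshold $\tilde M_0:=M_0\Delta^{n-s}$, with $M_0=\delta^{\kappa(s-d+1)-\kappa\tau-3\varepsilon}$. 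A short calculation confirms $\tilde M_0\gtrsim 1$ for small $\delta$, so \propref{prop:heavypart} applies and yields $\HH^{n+2\varepsilon'}_\infty(\psi_Q(\widetilde{Q}_H))\lesssim_{\varepsilon'} \tilde M_0^{-1}\|\tilde\mu_{Q,\theta}\|^2_{H^\sigma}$. A Fourier change of variable combined with \eqref{eq:7} yields the scaling identity $\|\tilde\mu_{Q,\theta}\|^2_{H^\sigma}\lesssim 1+\Delta^{-s-\varepsilon'}\|\mu_{Q,\theta}\|^2_{\dot H^\sigma}$.

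Undoing the spatial scaling (a $\Delta^{n+2\varepsilon'}$ factor on the Hausdorff content) and trading $\HH^{n+2\varepsilon'}_\infty$ for $\HH^{s-\tau}_\infty$ via $\HH^{s-\tau}_\infty(A)\lesssim \Delta^{s-\tau-n-2\varepsilon'}\HH^{n+2\varepsilon'}_\infty(A)$ on sets of diameter $\lesssim\Delta$ gives
\[
\HH^{s-\tau}_\infty(\widetilde{Q}_H)\lesssim \Delta^{2s-\tau-n}M_0^{-1}+\Delta^{s-\tau-n-\varepsilon'}M_0^{-1}\|\mu_{Q,\theta}\|^2_{\dot H^\sigma}.
\]
Summing over $Q\in\DD_\Delta$ ($\#\DD_\Delta\sim\Delta^{-s}$), integrating in $\theta$, and applying \eqref{eq:sobolevestmuQ} to obtain $\sum_Q\int\|\mu_{Q,\theta}\|^2_{H^\sigma}\,d\theta\lesssim 1$, substituting $\Delta=\delta^\kappa$ and $\tau=\alpha(s-d+1)-5\varepsilon$ produces $\delta$-exponents of exactly $3\varepsilon$ (first term, which is $\varepsilon'$-independent) and $-\kappa\varepsilon'+3\varepsilon$ (second term); choosing $\varepsilon'=\varepsilon$ and using $\kappa<1$ makes both $\ge\varepsilon$, completing the bound. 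The main obstacle is precisely this scaling bookkeeping: tracking how $\cM$, the Hausdorff content, the Sobolev norm, and the threshold $M_0$ each transform under $\psi_Q$, and verifying that the $\Delta$-powers then cancel cleanly against $M_0^{-1}$ and $\#\DD_\Delta$ --- a cancellation reflecting the careful calibration of the exponents in the definitions of $\T_{H,Q}$ and $\widetilde{Q}_H$ against the Ahlfors-regular scale of $\mu_Q$ at scale $\Delta$.
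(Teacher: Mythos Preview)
Your argument is correct and follows the paper's overall strategy: apply \propref{prop:heavypart} with $n=d-1$ to $\widetilde{E}_H'$ and to each $\widetilde{Q}_H$, then sum over $Q\in\DD_\Delta$ and integrate in $\theta$ using \eqref{eq:sobolevestmu}--\eqref{eq:sobolevestmuQ}. The one substantive difference is that for $\widetilde{Q}_H$ you first rescale $\mu_Q$ to unit scale, track how the maximal function, the Hausdorff content, and the Sobolev norm transform under $\psi_Q$, and then undo the scaling. The paper avoids all of this: since $\supp\mu_Q\subset 3Q\subset [0,1]^d$ and the threshold $M=\delta^{\kappa(s-d+1)-\kappa\tau-3\varepsilon}$ already dominates $3^{d-1}\mu(3Q)\sim\delta^{\kappa s}$ for small $\delta$, the hypotheses of \propref{prop:heavypart} are satisfied by $\nu=\mu_Q$ as it stands, and one obtains $\HH^{d-1+2\varepsilon}_\infty(\widetilde{Q}_H)\lesssim_\varepsilon M^{-1}\|\mu_{Q,\theta}\|^2_{H^\sigma}$ in a single step. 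Multiplying by $\ell(Q)^{s-\tau-(d-1+2\varepsilon)}$ via the inequality $\HH^a_\infty(A)\le\diam(A)^{a-b}\HH^b_\infty(A)$ then yields the exponent $(3-2\kappa)\varepsilon$ immediately, after which the sum over $Q$ is controlled by $\sum_Q\mu(3Q)\sim 1$. Your rescaling reproduces exactly the same numerics (your two exponents $3\varepsilon$ and $(3-\kappa)\varepsilon$ are both $\ge\varepsilon$), so nothing is lost; but the bookkeeping you flag as ``the main obstacle'' is in fact entirely avoidable.
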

 \begin{proof}
 	Recall that 
 	\begin{equation*}
 	E_{H,\theta} = E_{H,\theta}' \cup \bigcup_{Q\in\DD_\Delta}Q_{H,\theta}.
 	\end{equation*} 		
 	By \lemref{lem:heavytube} and \propref{prop:heavypart} applied with $n=d-1$, $\nu=\mu,\ F=E$ and $M=\delta^{-\varepsilon}$, we have
 	\begin{equation*}
 	\HH_\infty^{d-1+2\varepsilon}(E_{H,\theta}')\le\HH_\infty^{d-1+2\varepsilon}(\widetilde{E}_{H,\theta}')\lesssim_\varepsilon\delta^\varepsilon\|\mu_\theta\|_{H^\sigma(\R^{d-1})}^2,
 	\end{equation*}
 	so that
 	\begin{equation}\label{eq:8}
 	\int_{\mathbb{S}^{d-1}}\HH_\infty^{d-1+2\varepsilon}(E_{H,\theta}')\ d\theta\lesssim_\varepsilon \delta^{\varepsilon}\int_{\mathbb{S}^{d-1}}\|\mu_\theta\|_{H^\sigma(\R^{d-1})}^2\, d\theta\overset{\eqref{eq:sobolevestmu}}{\lesssim_{\varepsilon}} \delta^{\varepsilon}.
 	\end{equation}
 	Since $d-1+2\varepsilon<s-\tau$, this is even better than the estimate for $\HH_\infty^{s-\tau}(E_{H,\theta}')$ that we need thanks to the simple inequality
 	\begin{equation}\label{eq:simpleineq}
 	\HH^a_\infty(A)\le \diam(A)^{a-b} \cdot \HH^b_\infty(A)\quad \text{if $a>b$}.
 	\end{equation}
 	
 	We move on to estimating $\HH_\infty^{s-\tau}(Q_{H,\theta})$. We apply again \propref{prop:heavypart} with $n=d-1$, $\nu=\mu_Q,$ $F=3\overline{Q}\cap E$ and $M=\delta^{\kappa(s-d+1)-\kappa\tau-3\varepsilon}$ to get
 	\begin{equation*}
 	\HH_\infty^{d-1+2\varepsilon}(Q_{H,\theta})\le \HH_\infty^{d-1+2\varepsilon}(\widetilde{Q}_{H,\theta})\lesssim_\varepsilon\delta^{\kappa\tau+3\varepsilon-\kappa(s-d+1)}\|\mu_{Q,\theta}\|_{H^\sigma(\R^{d-1})}^2.
 	\end{equation*}
 	We use \eqref{eq:simpleineq} to estimate
 	\begin{multline*}
 	\HH_\infty^{s-\tau}(Q_{H,\theta})\lesssim \ell(Q)^{s-\tau - (d-1+2\varepsilon)}\,\HH_\infty^{d-1+2\varepsilon}(Q_{H,\theta})\\
 	\lesssim_\varepsilon\delta^{\kappa(s-d+1-\tau-2\varepsilon) -\kappa(s-d+1-\tau)+3\varepsilon}\|\mu_{Q,\theta}\|_{H^\sigma(\R^{d-1})}^2\\
 	=\delta^{(3-2\kappa)\varepsilon}\|\mu_{Q,\theta}\|_{H^\sigma(\R^{d-1})}^2\le \delta^\varepsilon\|\mu_{Q,\theta}\|_{H^\sigma(\R^{d-1})}^2.
 	\end{multline*}
 	Summing over $Q\in\DD_\Delta$ and integrating over $\theta\in\mathbb{S}^{d-1}$ yields
 	\begin{multline*}
 	\int_{\mathbb{S}^{d-1}}\sum_{Q\in\DD_\Delta}\HH_\infty^{s-\tau}(Q_{H,\theta})\ d\theta\lesssim_\varepsilon \delta^\varepsilon\sum_{Q\in\DD_\Delta}\int_{\mathbb{S}^{d-1}}\|\mu_{Q,\theta}\|_{H^\sigma(\R^{d-1})}^2\, d\theta\\
 	\overset{\eqref{eq:sobolevestmuQ}}{\lesssim_\varepsilon} \delta^\varepsilon\sum_{Q\in\DD_\Delta} \mu(3Q) \sim \delta^\varepsilon.
 	\end{multline*}
 	Together with \eqref{eq:8}, this finishes the proof of \eqref{eq:heavyest}.
 \end{proof}

 \section{Bad part}
 Fix $\theta\in\mathbb{S}^{d-1}$. Recall that the collection of bad lines with respect to $Q$ parallel to $\theta$ is denoted by $\cL_{Q,B}$,	$L_{Q,B} = \bigcup_{\ell\in\cL_{Q,B}} \ell$, $L_B=\bigcup_{Q\in\DD_{\Delta}}L_{Q,B}$, and that the bad part was defined as $E_{B} = E\cap L_{B}\setminus E_{H}$. In this section we estimate $\HH_\infty^{s-\tau}(E_B)$.
 
 \begin{lemma}\label{lem:contentprojection}
 	Let $\sigma=(s-d+1-\varepsilon)/2$. For every $Q\in\DD_{\Delta}$ we have
 	\begin{equation*}
 	\HH_\infty^{d-1-\tau}(\pi_\theta(L_{Q,B})) \lesssim_\varepsilon \delta^{(1-3\alpha-2\kappa)(s-d+1)+5\varepsilon}\|\mu_{Q,\theta}\|_{H^{\sigma}(\R^{d-1})}^2.
 	\end{equation*}
 \end{lemma}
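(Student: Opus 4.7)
The plan is to show that $\pi_\theta(L_{Q,B})$ is contained in the superlevel set $\{y:\cM\mu_{Q,\theta}(y)\gtrsim M'\}$ for an appropriate threshold $M'$, and then to bound the Hausdorff content of that superlevel set using Sobolev regularity, much in the spirit of Proposition \ref{prop:heavypart}. For the first step, I would use that each bad line $\ell\in\cL_{Q,B}$ lies in a $\delta$-tube $T\in\T(Q)$ that substantially intersects $E\cap Q$, so $s$-Ahlfors regularity of $\mu$ gives $\mu_Q(T)\gtrsim\delta^s\, N(T\cap E\cap Q,\delta)\ge\delta^{s+(\kappa-1)(s-d+1)+\tau+4\varepsilon}$. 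Since $B_T\coloneqq\pi_\theta(T)$ is a $\delta$-ball in $\R^{d-1}$ with $\mu_{Q,\theta}(B_T)\ge\mu_Q(T)$, every $y\in B_T$ satisfies $\cM\mu_{Q,\theta}(y)\gtrsim M'\coloneqq c\,\delta^{\kappa(s-d+1)+\tau+4\varepsilon}$, and hence $\pi_\theta(L_{Q,B})\subset\bigcup_{T\in\T(Q)} B_T\subset\{\cM\mu_{Q,\theta}\gtrsim M'\}$. In particular, $\pi_\theta(L_{Q,B})$ is covered by the $\delta$-balls $\{B_T\}_{T\in\T(Q)}$; this small-scale covering is what makes the Ahlfors regularity eventually buy something.

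For the second step, I would apply the $\R^{d-1}$ version of Proposition \ref{prop:heavypart} (namely Lemma \ref{lem:heavyaux} combined with the one-third trick of Lemma \ref{lem:tildeH}) with $t\coloneqq 2(d-1)-s+2\varepsilon$ and with the optimal $\sigma=(s-d+1-\varepsilon)/2$, obtaining
\begin{equation*}
\HH^t_\infty\bigl(\{\cM\mu_{Q,\theta}\gtrsim M'\}\bigr)\lesssim_\varepsilon (M')^{-2}\,\|\mu_{Q,\theta}\|^2_{H^\sigma(\R^{d-1})}.
\end{equation*}
Third, I would exploit the $\delta$-scale cover from Step~1 to upgrade this $\HH^t_\infty$-estimate into a bound on $\HH^{d-1-\tau}_\infty$. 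Since the covering balls can be taken of radius $\le\delta$, the bump-up factor is $\delta^{(d-1-\tau)-t}=\delta^{s-d+1-\tau-2\varepsilon}$ rather than the naive $\Delta^{(d-1-\tau)-t}=\delta^{\kappa(s-d+1-\tau-2\varepsilon)}$. This would give
\begin{equation*}
\HH^{d-1-\tau}_\infty(\pi_\theta(L_{Q,B}))\lesssim_\varepsilon \delta^{s-d+1-\tau-2\varepsilon}\,(M')^{-2}\,\|\mu_{Q,\theta}\|^2_{H^\sigma(\R^{d-1})},
\end{equation*}
and a direct substitution of $M'=c\delta^{\kappa(s-d+1)+\tau+4\varepsilon}$ and $\tau=\alpha(s-d+1)-5\varepsilon$ produces the exponent $(1-2\kappa)(s-d+1)-3\tau-10\varepsilon=(1-3\alpha-2\kappa)(s-d+1)+5\varepsilon$, matching the statement.

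The main obstacle is making the $\delta$-scale bump-up in Step~3 rigorous: passing from a bound on $\HH^t_\infty$ to one on $\HH^t_\delta$ (the Hausdorff pre-measure with covering radii restricted to $\le\delta$) is not automatic, since the optimal covering produced by Lemma \ref{lem:heavyaux} may involve balls at larger scales. I would address this by re-running the proof of Lemma \ref{lem:heavyaux} against a Frostman-type measure $\eta$ supported on $\bigcup_T B_T$ rather than on the maximal dyadic cubes in $\cB$; concretely, one applies Lemma \ref{lem:Frostman2} to the dyadic cubes at scale $\delta$ underlying the $\delta$-ball cover $\{B_T\}$, and then runs Theorem \ref{thm:traces} applied to $f_\gamma=\mu_{Q,\theta}*\varphi_\gamma$ with $\gamma<\delta$. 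The pointwise lower bound $\fint_{B_T}f_\gamma\gtrsim M'$ from Step~1 feeds, via Cauchy--Schwarz as in Lemma \ref{lem:heavyaux}, into a matching lower bound on $\int|f_\gamma|^2\,d\eta$, yielding the desired estimate with covering mass concentrated at scale $\delta$.
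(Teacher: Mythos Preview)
Your argument has a genuine gap: nowhere do you use the defining property of bad lines, namely $\ell\cap E\cap\overline{3Q}=\varnothing$. Your Steps~1--3 apply verbatim to \emph{every} line contained in some $T\in\T(Q)$, not just the bad ones, so what you are really bounding is $\HH^{d-1-\tau}_\infty\bigl(\bigcup_{T\in\T(Q)}\pi_\theta(T)\bigr)$. But for that larger set the claimed bound is simply false: take the extreme case where every $\delta$-tube through $Q$ lies in $\T(Q)$; then the set is essentially a $\Delta$-ball, with $\HH^{d-1-\tau}_\infty\sim\Delta^{d-1-\tau}$ and $\HH^t_\infty\sim\Delta^t$, and your Step~3 inequality $\HH^{d-1-\tau}_\infty\lesssim\delta^{(d-1-\tau)-t}\HH^t_\infty$ would force $\kappa\ge 1$. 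The proposed fix does not help: re-running Lemma~\ref{lem:heavyaux} with a Frostman measure on the $\delta$-cubes still produces an $\HH^t_\infty$-bound (since $\eta(K)\sim\HH^t_\infty(K)$ by construction), not the $\HH^t_\delta$-bound that the bump-up would require. If instead you take $\eta$ to be $(d-1-\tau)$-Frostman and apply Theorem~\ref{thm:traces} directly, you obtain only $\HH^{d-1-\tau}_\infty(\pi_\theta(L_{Q,B}))\lesssim (M')^{-2}\|\mu_{Q,\theta}\|_{H^\sigma}^2$, which is weaker than the lemma by exactly the missing factor $\delta^{s-d+1-\tau-2\varepsilon}$ and is useless for Lemma~\ref{lem:badest}.

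The paper's proof uses the bad-line condition in an essential way. Since $\ell\cap E\cap\overline{3Q}=\varnothing$ for $\ell\in\cL_{Q,B}$, the Frostman measure $\nu$ on $\pi_\theta(L_{Q,B})$ has $\supp\nu\cap\supp\mu_{Q,\theta}=\varnothing$, hence $\int\mu_{Q,\theta}\ast\varphi_\eta\,d\nu=0$. Via Plancherel this becomes an identity $I_1=I_2$ between a low-frequency piece (carrying the lower bound $\gtrsim\delta^{\kappa(s-d+1)+\tau+4\varepsilon}$ from substantial intersection) and a high-frequency piece (bounded by $\delta^a\|\mu_{Q,\theta}\|_{H^\sigma}\HH^{d-1-\tau}_\infty(\pi_\theta(L_{Q,B}))^{-1/2}$ using $|1-\hat\varphi(C\delta\xi)|\lesssim(\delta|\xi|)^a$). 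The choice $a=(s-d+1-\tau)/2-\varepsilon$ is exactly what produces the extra gain $\delta^{s-d+1-\tau-2\varepsilon}$ that your approach cannot reach.
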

 \begin{proof}
 	Suppose that $\HH_\infty^{d-1-\tau}(\pi_\theta(L_{Q,B}))>0$, otherwise there is nothing to prove. It is easy to see that $\pi_\theta(L_{Q,B})\subset\R^{d-1}$ is a Borel set, and so we can use the usual Frostman's lemma (\cite[Theorem 8.8]{mattila1999geometry}) to get  a Borel probability measure $\nu$ with $\supp\nu\subset \pi_\theta(L_{Q,B})$ and
 	\begin{equation}\label{eq:nuFrostman}
 	\nu(B(x,r))\lesssim \HH_\infty^{d-1-\tau}(\pi_\theta(L_{Q,B}))^{-1}r^{d-1-\tau}.
 	\end{equation}
 	
 	By the definition of bad lines in $\cL_{Q,B}$ \eqref{eq:defbadline}, we have 
 	\begin{equation*}
 	\pi_\theta(L_{Q,B})\cap \pi_\theta(\overline{3Q}\cap E)=\varnothing.
 	\end{equation*}
 	This means that $\supp\nu\cap\supp\mu_{Q,\theta}=\varnothing$. Since the supports are compact, we get that for $\eta>0$ small enough
 	\begin{equation}\label{eq:integralzero}
 	\int \mu_{Q,\theta}\ast\varphi_{\eta}\ d\nu=0,
 	\end{equation}
 	where $\varphi_{\eta}$ is a smooth mollifier, as in the proof of \lemref{lem:heavyaux}. At the same time, using Plancherel's identity
 	\begin{multline*}
 	\int \mu_{Q,\theta}\ast\varphi_{\eta}\ d\nu=\int_{\R^{d-1}}\hat{\varphi}(\eta\xi)\widehat{\mu_{Q,\theta}}(\xi)\overline{\hat{\nu}(\xi)}\ d\xi\\
 	\ge \bigg| \int_{\R^{d-1}} \hat{\varphi}(C\delta\xi)\hat{\varphi}(\eta\xi)\widehat{\mu_{Q,\theta}}(\xi)\overline{\hat{\nu}(\xi)}\ d\xi\bigg|\\
 	- \bigg| \int_{\R^{d-1}} \big(1-\hat{\varphi}(C\delta\xi)\big)\hat{\varphi}(\eta\xi)\widehat{\mu_{Q,\theta}}(\xi)\overline{\hat{\nu}(\xi)}\ d\xi\bigg| = I_1 - I_2,
 	\end{multline*}
 	where $C\ge 1$ is an absolute constant to be fixed below. Note that by \eqref{eq:integralzero} we have $I_1= I_2$.
 	
 	We estimate $I_1$ from below. Note that, by Plancherel,
 	\begin{equation*}
 	I_1 = \int \mu_{Q,\theta}\ast\varphi_{C\delta}\ast\varphi_{\eta}\ d\nu.
 	\end{equation*}
 	Observe that, by the definition of bad lines, and \eqref{eq:defsubstinter}, for any $x\in \supp\nu\subset \pi_\theta(L_{Q,B})$ the line $\ell=\pi_\theta^{-1}(x)\in\cL_{Q,B}$ satisfies $N(\ell(C\delta)\cap E\cap Q,\delta)\gtrsim \delta^{(\kappa-1)(s-d+1) + \tau+4\varepsilon}$. Recalling that for $y\in E\cap Q$ we have $\mu_Q(B(x,\delta))\sim \delta^{s}$, it follows that
 	\begin{equation*}
 	\mu_{Q}(\ell(C\delta))\gtrsim \delta^{(\kappa-1)(s-d+1) + \tau+4\varepsilon}\cdot \delta^s,
 	\end{equation*}
 	which means that, assuming $C\sim 1$ large enough and $\eta>0$ small enough,
 	\begin{equation*}
 	\mu_{Q,\theta}\ast\varphi_{C\delta}\ast\varphi_{\eta}(x)\gtrsim \delta^{\kappa(s-d+1) + \tau+ 4\varepsilon}.
 	\end{equation*}
 	Since $\nu$ is a probability measure, we get that
 	\begin{equation}\label{eq:I1est}
 	I_1\gtrsim \delta^{\kappa(s-d+1) + \tau+ 4\varepsilon}.
 	\end{equation}
 	
 	Now we estimate $I_2$ from above. Since $\hat{\varphi}$ is a bounded Lipschitz function, and $\hat{\varphi}(0)=1$, we have for any $0\le a\le 1$
 	\begin{equation*}
 	|1-\hat{\varphi}(C\delta\xi)|=|\hat{\varphi}(0)-\hat{\varphi}(C\delta\xi)|\lesssim \min(|\delta\xi|,1)\le\delta^a|\xi|^{a}.
 	\end{equation*}
 	Thus, by the Cauchy-Schwarz inequality
 	\begin{multline*}
 	I_2\lesssim \delta^{a}\int_{\R^{d-1}}|\xi|^a |\widehat{\mu_{Q,\theta}}(\xi)||\hat{\nu}(\xi)|\ d\xi\\
 	\le \delta^{a}\left(\int_{\R^{d-1}} |\widehat{\mu_{Q,\theta}}(\xi)|^{2}|\xi|^{s-d+1-\varepsilon}\ d\xi  \right)^{1/2}\left(\int_{\R^{d-1}} |\widehat{\nu}(\xi)|^{2}|\xi|^{(2a - s + 2(d-1)+\varepsilon) - (d-1)}\ d\xi  \right)^{1/2}.
 	\end{multline*}
 	Since $\nu$ satisfies the Frostman condition \eqref{eq:nuFrostman}, it follows from the Riesz energy estimate \eqref{eq:rieszener} and the identity \eqref{eq:energysobolev} that if $2a - s + 2(d-1) +\varepsilon\le d-1-\tau-\varepsilon$, then
 	\begin{equation*}
 	\int_{\R^{d-1}} |\widehat{\nu}(\xi)|^{2}|\xi|^{(2a - s + 2(d-1)+\varepsilon) - (d-1)}\ d\xi\lesssim_{\varepsilon}  \HH_\infty^{d-1-\tau}(\pi_\theta(L_{Q,B}))^{-1}.
 	\end{equation*}
 	Together with the preceding estimate, we arrive at
 	\begin{equation*}
 	I_2\lesssim_{\varepsilon} \delta^a  \|\mu_{Q,\theta}\|_{H^{\sigma}(\R^{d-1})}\, \HH_\infty^{d-1-\tau}(\pi_\theta(L_{Q,B}))^{-1/2},
 	\end{equation*}
 	as long as 
 	\begin{equation*}
 	0\le a \le \frac{(s - d+1)-\tau}{2} -\varepsilon.
 	\end{equation*}		
 	
 	Recalling that $I_1= I_2$, we compare the estimate of $I_2$ with \eqref{eq:I1est} and obtain
 	\begin{equation*}
 	\delta^a  \|\mu_{Q,\theta}\|_{H^{\sigma}(\R^{d-1})}\, \HH_\infty^{d-1-\tau}(\pi_\theta(L_{Q,B}))^{-1/2} \gtrsim_{\varepsilon} \delta^{\kappa(s-d+1)+\tau+4\varepsilon},
 	\end{equation*}
 	which is equivalent to
 	\begin{equation*}
 	\HH_\infty^{d-1-\tau}(\pi_\theta(L_{Q,B})) \lesssim_\varepsilon \delta^{2a - 2\kappa(s-d+1)-2\tau-8\varepsilon}\|\mu_{Q,\theta}\|_{H^{\sigma}(\R^{d-1})}^2.
 	\end{equation*}
 	Choosing $a=\frac{(s - d+1)-\tau}{2} -\varepsilon$ we get
 	\begin{multline*}
 	\HH_\infty^{d-1-\tau}(\pi_\theta(L_{Q,B})) \lesssim_\varepsilon \delta^{(1-2\kappa)(s-d+1)-3\tau-10\varepsilon}\|\mu_{Q,\theta}\|_{H^{\sigma}(\R^{d-1})}^2\\
 	=\delta^{(1-2\kappa-3\alpha)(s-d+1)+5\varepsilon}\|\mu_{Q,\theta}\|_{H^{\sigma}(\R^{d-1})}^2,
 	\end{multline*}
 	where we used $\tau=\alpha(s-d+1)-5\varepsilon$.	
 \end{proof}

 \begin{lemma}\label{lem:badest}
 	If $2\kappa+3\alpha\le 1$, then
 	\begin{equation}\label{eq:badestimate}
 	\int_{\mathbb{S}^{d-1}}\HH^{s-\tau}_\infty(E_{B,\theta})\ d\theta\lesssim_\varepsilon\delta^{\varepsilon}.
 	\end{equation}
 \end{lemma}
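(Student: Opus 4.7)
The plan is to split $E_{B,\theta}$ according to the cubes $Q\in\DD_\Delta$ and, for each $Q$, convert a near-optimal cover of $\pi_\theta(L_{Q,B})$ inside $\R^{d-1}$ into a cover of $(E\cap L_{Q,B})\setminus E_H$ inside $\R^d$ via preimage tubes; the content of the projection is then controlled by Lemma~\ref{lem:contentprojection}. By countable subadditivity,
$$\HH^{s-\tau}_\infty(E_{B,\theta})\le\sum_{Q\in\DD_\Delta}\HH^{s-\tau}_\infty\bigl((E\cap L_{Q,B})\setminus E_H\bigr).$$

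Fix $Q\in\DD_\Delta$ and choose a near-optimal dyadic cover $\{P_i\}\subset\mathbb{D}_{\R^{d-1}}$ of $\pi_\theta(L_{Q,B})$ with $\sum_i\ell(P_i)^{d-1-\tau}\lesssim\HH^{d-1-\tau}_\infty(\pi_\theta(L_{Q,B}))$. The preimage tubes $T_i\coloneqq\pi_\theta^{-1}(P_i)\in\T_{\ell(P_i),\theta}$ then cover $L_{Q,B}$. The key observation is that any $T_i\in\T_H$ may be dropped: by definition $T_i\cap E\subset E_H'\subset E_H$, so $T_i$ is disjoint from $(E\cap L_{Q,B})\setminus E_H$. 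For the remaining $T_i\notin\T_H$, the defining inequality \eqref{eq:heavytube} fails, giving
$$N(T_i\cap E,\ell(P_i))\le\delta^{-2\varepsilon}\ell(P_i)^{-s+d-1}.$$
Collecting the associated balls of radius $\ell(P_i)$,
\begin{equation*}
\HH^{s-\tau}_\infty\bigl((E\cap L_{Q,B})\setminus E_H\bigr)\lesssim\delta^{-2\varepsilon}\sum_i\ell(P_i)^{-s+d-1}\ell(P_i)^{s-\tau}=\delta^{-2\varepsilon}\sum_i\ell(P_i)^{d-1-\tau}\lesssim\delta^{-2\varepsilon}\HH^{d-1-\tau}_\infty(\pi_\theta(L_{Q,B})).
\end{equation*}
Invoking Lemma~\ref{lem:contentprojection} and combining $-2\varepsilon+5\varepsilon=3\varepsilon$,
$$\HH^{s-\tau}_\infty\bigl((E\cap L_{Q,B})\setminus E_H\bigr)\lesssim_\varepsilon\delta^{(1-3\alpha-2\kappa)(s-d+1)+3\varepsilon}\,\|\mu_{Q,\theta}\|^2_{H^\sigma(\R^{d-1})}.$$

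Summing over $Q\in\DD_\Delta$, integrating in $\theta$, and applying \eqref{eq:sobolevestmuQ} together with $\#\DD_\Delta\sim\delta^{-\kappa s}$ gives
$$\int_{\TT}\HH^{s-\tau}_\infty(E_{B,\theta})\,d\theta\lesssim_\varepsilon\delta^{(1-3\alpha-2\kappa)(s-d+1)+3\varepsilon}\cdot\delta^{-\kappa s}\cdot\delta^{\kappa s}=\delta^{(1-3\alpha-2\kappa)(s-d+1)+3\varepsilon}.$$
Under the hypothesis $2\kappa+3\alpha\le 1$ the first exponent is non-negative, so the whole expression is $\lesssim_\varepsilon\delta^{3\varepsilon}\le\delta^{\varepsilon}$. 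The only genuinely non-routine step is verifying that heavy preimage tubes can be discarded from the cover, which is immediate once one notices $E_B\cap E_H=\varnothing$ by definition and $T\cap E\subset E_H$ for $T\in\T_H$; everything else is bookkeeping that combines the non-heavy $\delta^{-2\varepsilon}$ slice bound, the projection estimate from Lemma~\ref{lem:contentprojection}, and the Sobolev average \eqref{eq:sobolevestmuQ}.
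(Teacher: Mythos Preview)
Your proof is correct and follows essentially the same approach as the paper: lift a near-optimal cover of $\pi_\theta(L_{Q,B})$ to tubes, use that tubes meeting $E_{B,\theta}$ cannot be heavy (you phrase this as ``drop $T_i\in\T_H$ since then $T_i\cap E\subset E_H'$''), apply the non-heavy slice bound together with \lemref{lem:contentprojection}, then sum and integrate via \eqref{eq:sobolevestmuQ}. The only cosmetic differences are that the paper covers $\pi_\theta(L_B)$ with one family $\cP$ rather than each $\pi_\theta(L_{Q,B})$ separately, and evaluates $\sum_{Q\in\DD_\Delta}\mu(3Q)\sim 1$ by bounded overlap rather than your $\#\DD_\Delta\cdot\delta^{\kappa s}$ count.
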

 \begin{proof}
 	Note that if $\kappa$ and $\alpha$ satisfy $2\kappa+3\alpha\le 1$, then \lemref{lem:contentprojection} gives
 	\begin{equation}\label{eq:10}
 	\HH_\infty^{d-1-\tau}(\pi_\theta(L_{Q,B})) \lesssim_\varepsilon \delta^{5\varepsilon}\|\mu_{Q,\theta}\|_{H^{\sigma}(\R^{d-1})}^2.
 	\end{equation}
 	
 	Let $\cP$ be a family of dyadic cubes in $\R^{d-1}$ that covers $\pi_\theta(L_B)=\bigcup_{Q\in\DD_\Delta} \pi_\theta(L_{Q,B})$ and such that
 	\begin{equation*}
 	\sum_{P\in\cP}\ell(P)^{d-1-\tau} \lesssim \sum_{Q\in\DD_\Delta}\HH_\infty^{d-1-\tau}(\pi_\theta(L_{Q,B})).
 	\end{equation*}
 	Clearly, $\cP$ is a covering of $\pi_\theta(E_{B,\theta}),$ and without loss of generality we may assume that each $P\in\cP$ intersects $\pi_\theta(E_{B,\theta}).$
 	
 	For each $P\in\cP$ let $T_P\in\T$ be the tube parallel to $\theta$ such that $\pi_\theta(T_P)= P$. Clearly, 
 	\begin{equation*}
 	E_{B,\theta}\subset \bigcup_{P\in\cP}T_P\cap E.
 	\end{equation*}
 	Since $P\cap\pi_\theta(E_{B,\theta})\neq\varnothing$, we have $T_P\cap E_{B,\theta}\neq\varnothing$. Recalling that $E_{B,\theta}\cap E_{H,\theta}=\varnothing$, we see that $T_P\cap E\not\subset E_{H,\theta}$. This means that $T_P\notin\T_H$, and so by the definition of heavy tubes \eqref{eq:heavytube},
 	\begin{equation*}
 	N(T_P\cap E,\,\ell(P))\le \delta^{-2\varepsilon}\ell(P)^{-s+d-1}.
 	\end{equation*}
 	
 	%
 	It follows that
 	\begin{multline*}
 	\HH^{s-\tau}_\infty(E_{B,\theta})\lesssim \sum_{P\in\cP}N(T_P\cap E,\,\ell(P))\cdot\ell(P)^{s-\tau}\\
 	\lesssim\delta^{-2\varepsilon} \sum_{P\in\cP} \ell(P)^{d-1-\tau}\lesssim \delta^{-2\varepsilon}\sum_{Q\in\DD_\Delta}\HH_\infty^{d-1-\tau}(\pi_\theta(L_{Q,B})).
 	\end{multline*}
 	Together with \eqref{eq:10} this gives
 	\begin{equation*}
 	\HH^{s-\tau}_\infty(E_{B,\theta})\lesssim \delta^{\varepsilon}\sum_{Q\in\DD_\Delta}\|\mu_{Q,\theta}\|_{H^{\sigma}(\R^{d-1})}^2.
 	\end{equation*}		
 	Integrating the above over $\theta\in\mathbb{S}^{d-1}$ and using \eqref{eq:sobolevestmuQ} we get the desired estimate:
 	\begin{equation}\label{eq:badestimate2}
 	\int_{\mathbb{S}^{d-1}}\HH^{s-\tau}_\infty(E_{B,\theta})\ d\theta \lesssim\delta^{\varepsilon}\sum_{Q\in\DD_\Delta}\int_{\mathbb{S}^{d-1}}\|\mu_{Q,\theta}\|_{H^{\sigma}(\R^{d-1})}^2\ d\theta
 	\lesssim_\varepsilon \delta^{\varepsilon}\sum_{Q\in\DD_\Delta} \mu(3Q) \sim \delta^{\varepsilon}.
 	\end{equation}
 \end{proof}
 \section{Good part}
 In this section we estimate $\HH_\infty^{s-\tau}(\vis_\theta(E)\cap E_G)$. Recall that $\T_\delta$ denotes a collection of $\sim \delta^{1-d}$ tubes of width $\delta$, parallel to $\theta$, which cover $[0,1]^d$.
 \begin{lemma}\label{lem:coveringgood}
 	For each $T\in\mathcal{T}_\delta$ we have
 	\begin{equation*}
 	N(\vis_\theta(E)\cap E_G\cap T,\delta)\lesssim \delta^{(\alpha - 1)(s-d+1)-3\varepsilon} + \delta^{(\kappa - 1-\kappa\alpha)(s-d+1)-4\varepsilon}.
 	\end{equation*}
 \end{lemma}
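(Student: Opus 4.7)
The plan is to bound $N(\vis_\theta(E)\cap E_G\cap T,\delta)$ by localizing visible points to a controlled collection of $\Delta$-cubes along $T$, and then running two cube-by-cube estimates driven by the ``not heavy'' constraints on $T$ at both scales $\delta$ and $\Delta$.

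First observe that because $T\cap E_G\neq\varnothing$ we have $T\notin \T_L\cup\T_H$, and moreover the $\Delta$-tube $T'\supset T$ also avoids $\T_H$ (a point of $E_G$ inside $T$ is inside $T'$, so $T'\in\T_H$ would put it in $E_H'$). By the definition of heavy tubes this gives $N(T'\cap E,\Delta)\lesssim \Delta^{-(s-d+1)-2\varepsilon}=\delta^{-\kappa(s-d+1)-2\kappa\varepsilon}$, and by \lemref{lem:normaltubes} there is at least one cube $Q\in\DD_\Delta$ with $T\in\T(Q)$. Enumerate the cubes in $\cQ_T\coloneqq\{Q\in\DD_\Delta: T\cap Q\ne\varnothing\}$ as $Q_1,\ldots,Q_M$ in increasing order along $\theta$ and set $i^*\coloneqq\max\{i : T\in\T(Q_i)\}$.

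The key geometric observation is that every visible point $x\in\vis_\theta(E)\cap E_G\cap T$ lying in $Q_k$ must satisfy $k\ge i^*-C$ for a dimensional constant $C$. Indeed, for each $j$ with $T\in\T(Q_j)$, the condition $x\notin E_B$ forces the line $\ell_x\ni x$ to meet $E\cap\overline{3Q_j}$; by the visibility condition $(x+\ell_\theta)\cap E=\{x\}$, any such meeting point must lie no further than $x$ along $\theta$, and since $3Q_j$ extends only $O(\Delta)$ past the center of $Q_j$, this forces $j\le k+C$. Thus the visible points of $E_G\cap T$ lie in $Q_k$ with $k\in\{i^*-C,\ldots,M\}$.

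Now split into two cases. For the $O(1)$ indices $k\in\{i^*-C,\ldots,i^*\}$ with $T\in\T(Q_k)$ (Case B), the fact that $x\in E_G\cap 3Q_k$ forces $T\notin\T_{H,Q_k}$, so by \eqref{eq:heavytube2}
\[ N(T\cap E\cap Q_k,\delta)<\delta^{(\kappa-1)(s-d+1)-\kappa\tau-4\varepsilon}; \]
summing the $O(1)$ such contributions and substituting $\tau=\alpha(s-d+1)-5\varepsilon$ gives exactly the second term $\delta^{(\kappa-1-\kappa\alpha)(s-d+1)-4\varepsilon}$. For all other active $k$ (Case A), $T\notin\T(Q_k)$ gives $N(T\cap E\cap Q_k,\delta)<\delta^{(\kappa-1)(s-d+1)+\tau+4\varepsilon}$ by \eqref{eq:defsubstinter}; the number of such $k$ with $E\cap T\cap Q_k\neq\varnothing$ is controlled by $N(T'\cap E,\Delta)\lesssim\delta^{-\kappa(s-d+1)-2\kappa\varepsilon}$, and multiplying yields $\delta^{(\alpha-1)(s-d+1)-(1+2\kappa)\varepsilon}\le\delta^{(\alpha-1)(s-d+1)-3\varepsilon}$ since $\kappa\le 1$. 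Adding the two cases proves the claim.

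The main conceptual obstacle is the localization step---verifying that visibility combined with the definition of bad lines really pins the visible portion of $E_G\cap T$ to a window starting at $Q_{i^*-C}$; everything else is careful bookkeeping that balances the $\delta$-scale heavy condition (controlling the per-cube size in Case B) against the $\Delta$-scale heavy condition (controlling the number of active cubes in Case A), which is precisely the balance that the later choice $\kappa=\alpha/(1-\alpha)$ will exploit.
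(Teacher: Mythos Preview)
Your proof is correct and follows essentially the same strategy as the paper: you localize visible good points to $\Delta$-cubes at or above the highest cube $Q_{i^*}$ with $T\in\T(Q_{i^*})$ (the paper's $Q_T$), and your Cases A and B correspond exactly to the paper's families $\DD^>_{\Delta,T}$ and $\DD^=_{\Delta,T}$, while your localization step is the paper's argument that $\DD^<_{\Delta,T}$ contributes nothing. One small slip: the heavy-tube threshold in \eqref{eq:heavytube} is $\delta^{-2\varepsilon}\gamma^{-(s-d+1)}$, not $\gamma^{-(s-d+1)-2\varepsilon}$, so the correct count of active $\Delta$-cubes is $\lesssim\delta^{-\kappa(s-d+1)-2\varepsilon}$ rather than $\delta^{-\kappa(s-d+1)-2\kappa\varepsilon}$; with this correction the Case~A product gives exactly the exponent $(\alpha-1)(s-d+1)-3\varepsilon$ as stated.
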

 \begin{proof}
 	If $T\cap E_G=\varnothing$, then there is nothing to prove, so assume that $T\cap E_G\neq\varnothing$. Since $E_L\cap E_G=\varnothing$ and $E_H\cap E_G=\varnothing$, it follows that $T$ is not contained in any tube from $\T_L\cup\T_H$.
 	Hence, \lemref{lem:normaltubes} implies that there exists at least one $Q\in\DD_\Delta$ such that $T\in\T(Q)$, which means that
 	\begin{equation}\label{eq:highstack}
 	N(T\cap E\cap Q,\, \delta)\ge \delta^{(\kappa-1)(s-d+1) + \tau+ 4\varepsilon}.
 	\end{equation}
 	Furthermore, since $T$ is not contained in any tube from $\T_H$, we have that
 	the family $\DD_{\Delta,T}\coloneqq\{Q\in\DD_\Delta\ :\ Q\cap T\neq\varnothing\}$ satisfies
 	\begin{equation}\label{eq:fewbigcubes}
 	\#\DD_{\Delta,T}\lesssim \delta^{-\kappa (s-d+1)-2\varepsilon}.
 	\end{equation}
 	
 	Let $Q_T\in\DD_{\Delta,T}$ be the ``$\theta$-highest'' of all cubes $Q\in\DD_\Delta$ satisfying \eqref{eq:highstack}, in the sense that it maximizes $\inf\{x\cdot\theta : x\in 3Q\}$ among all such cubes. 
 	
 	We partition the family $\DD_{\Delta,T}$ into 3 subfamilies:
 	\begin{align}
 	\DD^<_{\Delta,T}&\coloneqq\big\{Q\in\DD_{\Delta,T}\ :\ \sup_{x\in 3Q}\, x\cdot\theta <  \inf_{x\in 3Q_T}x\cdot\theta\big\},\notag\\
 	\DD^=_{\Delta,T}&\coloneqq\big\{Q\in\DD_{\Delta,T}\ :\ \sup_{x\in 3Q}\, x\cdot\theta \ge  \inf_{x\in 3Q_T}x\cdot\theta \ge  \inf_{x\in 3Q}x\cdot\theta \big\},\label{eq:13}\\
 	\DD^>_{\Delta,T}&\coloneqq\big\{Q\in\DD_{\Delta,T}\ :\ \inf_{x\in 3Q}\, x\cdot\theta >  \inf_{x\in 3Q_T}x\cdot\theta\big\}\notag.
 	\end{align}
 	Roughly speaking, the cubes in $\DD^<_{\Delta,T}$ lie ``$\theta$-below'' $Q_T$, the cubes in $\DD^>_{\Delta,T}$ are ``$\theta$-above'' $Q_T$, and the cubes in $\DD^=_{\Delta,T}$ are ``$\theta$-on-the-same-height'' as $Q_T$.
 	Observe that
 	\begin{gather}
 	\#\DD^=_{\Delta,T}\lesssim 1,\label{eq:few=}\\
 	\#\DD^>_{\Delta,T}\le \#\DD_{\Delta,T}\overset{\eqref{eq:fewbigcubes}}{\lesssim}\delta^{-\kappa (s-d+1)-2\varepsilon}.\label{eq:few>}
 	\end{gather}
 	We will estimate $N(\vis_\theta(E)\cap E_G\cap T\cap Q,\delta)$ for cubes $Q$ in different families separately.
 	\vspace{1em}
 	
 	\emph{Cubes from $\DD^<_{\Delta,T}$}. We claim that for $Q\in \DD^<_{\Delta,T}$ we have $\vis_\theta(E)\cap E_G\cap T\cap Q=\varnothing$, so that
 	\begin{equation}
 	N(\vis_\theta(E)\cap E_G\cap T\cap Q,\delta)=0\quad\text{for $Q\in \DD^<_{\Delta,T}$.}
 	\end{equation}
 	
 	Indeed, suppose that there is some $x\in \vis_\theta(E)\cap E_G\cap T\cap Q$. Let $\ell\in\cL$ be the affine line such that $x\in \ell\subset T$. Since $x\in E_G$, we have $x\notin L_B$, and in particular $\ell\notin\cL_{Q_T,B}$, i.e. $\ell$ is not bad with respect to $Q_T$. Recalling that $T\in\T(Q_T)$, it follows from the definition of bad lines \eqref{eq:defbadline} that $\ell\cap E\cap \overline{3Q_T}\neq\varnothing.$

 	Let $y\in \ell\cap E\cap \overline{3Q_T}$. Then, by the definition of $\DD^<_{\Delta,T}$ we have $x\cdot \theta <y\cdot\theta$. But the assumption $x\in\vis_\theta(E)$ implies that for all $z\in \ell\cap E$ we have $x\cdot \theta > z\cdot\theta$, by the definition of $\vis_\theta(E)$. We have reached a contradiction.
 	
 	\vspace{1em}
 	
 	\emph{Cubes from $\DD^=_{\Delta,T}$}. Recall that $E_G\cap E_H=\varnothing$. In particular, for any $Q\in \DD^=_{\Delta,T}$ we have $E_G \cap Q_H = \varnothing$. There are two possibilities: either $T\in\T_{H,Q}$, or $T\notin\T_{H,Q}$. In the first case, we get $Q\cap E\cap T\subset Q_H\subset E_H$, so that $Q\cap E_G\cap T=\varnothing$. In the latter case, the definition of $\T_{H,Q}$ \eqref{eq:heavytube2} gives
 	\begin{equation}
 	N(Q\cap E_G\cap T,\delta)\lesssim \delta^{(\kappa-1)(s-d+1)-\kappa\tau - 4\varepsilon}\le \delta^{(\kappa-1-\kappa\alpha)(s-d+1)- 4\varepsilon}.
 	\end{equation}

 	\vspace{1em}
 	
 	\emph{Cubes from $\DD^>_{\Delta,T}$}. Recall that $Q_T$ was defined as the ``$\theta$-highest'' cube in $\DD_{\Delta,T}$ satisfying \eqref{eq:highstack}. This means that all the cubes in $\DD^>_{\Delta,T}$ do not satisfy \eqref{eq:highstack}, and so for $Q\in \DD^>_{\Delta,T}$
 	\begin{equation}
 	N(T\cap E\cap Q,\delta)\lesssim \delta^{(\alpha + \kappa-1)(s-d+1) -\varepsilon}.
 	\end{equation}
 	
 	\vspace{1em}
 	
 	We use the three estimates above to conclude that
 	\begin{multline*}
 	N(T\cap E_G\cap\vis_\theta(E),\delta)\le \sum_{Q\in\DD_{\Delta,T}} N(Q\cap T\cap E_G\cap\vis_\theta(E),\delta)\\
 	\lesssim \sum_{Q\in\DD_{\Delta,T}^=} \delta^{(\kappa-1-\kappa\alpha)(s-d+1)- 4\varepsilon} + \sum_{Q\in\DD_{\Delta,T}^>}\delta^{(\alpha + \kappa-1)(s-d+1) -\varepsilon}
 	\\ \overset{\eqref{eq:few=},\eqref{eq:few>}}{\lesssim} \delta^{(\kappa - 1-\kappa\alpha)(s-d+1)-4\varepsilon}  + \delta^{-\kappa(s-d+1)-2\varepsilon} \cdot \delta^{(\alpha + \kappa-1)(s-d+1) -\varepsilon} \\
 	= \delta^{(\kappa - 1-\kappa\alpha)(s-d+1)-4\varepsilon}  + \delta^{(\alpha -1)(s-d+1) -3\varepsilon}
 	\end{multline*}
 \end{proof}
 
 \begin{lemma}\label{lem:goodpartest}
 	If $\kappa=\alpha/(1-\alpha)$ and $\alpha\in (0,1/2)$, then 
 	\begin{equation*}
 	\HH_\infty^{s-\tau}(\vis_\theta(E)\cap E_G)\lesssim\delta^\varepsilon.
 	\end{equation*}
 \end{lemma}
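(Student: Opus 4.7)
The plan is a direct covering computation: convert the per-tube estimate from \lemref{lem:coveringgood} into a bound on $N(\vis_\theta(E)\cap E_G,\delta)$ by summing over the $\sim \delta^{1-d}$ tubes in $\T_\delta$, and then pass to Hausdorff content by multiplying by $\delta^{s-\tau}$. The choice $\kappa=\alpha/(1-\alpha)$ is designed precisely to equalize the two terms appearing in \lemref{lem:coveringgood}, and the constraint $\alpha<1/2$ ensures $\kappa\in(0,1)$ (so that $\Delta=\delta^\kappa\gg\delta$, as was assumed throughout the setup).

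First I would write
\begin{equation*}
\HH_\infty^{s-\tau}(\vis_\theta(E)\cap E_G)\le \sum_{T\in\T_\delta} N(\vis_\theta(E)\cap E_G\cap T,\delta)\cdot \delta^{s-\tau},
\end{equation*}
and plug in both $\#\T_\delta\sim \delta^{1-d}$ and the two-term bound of \lemref{lem:coveringgood}. This reduces everything to verifying two arithmetic identities among the exponents, using $\tau=\alpha(s-d+1)-5\varepsilon$.

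For the first term of the bound, the exponent on $\delta$ is
\begin{equation*}
(1-d)+(\alpha-1)(s-d+1)-3\varepsilon+(s-\tau)=(1-d)-(s-d+1)+s+\alpha(s-d+1)-\alpha(s-d+1)+2\varepsilon=2\varepsilon,
\end{equation*}
after cancelling the $\alpha(s-d+1)$ terms. For the second term, using $\kappa(1-\alpha)=\alpha$ (equivalent to the choice $\kappa=\alpha/(1-\alpha)$), the exponent on $\delta$ is
\begin{equation*}
(1-d)+(\kappa-1-\kappa\alpha)(s-d+1)-4\varepsilon+(s-\tau)=(1-d)+\alpha(s-d+1)-(s-d+1)+s-\alpha(s-d+1)+\varepsilon=\varepsilon,
\end{equation*}
since $\kappa(1-\alpha)(s-d+1)=\alpha(s-d+1)$ cancels the $-\alpha(s-d+1)$ coming from $-\tau=-\alpha(s-d+1)+5\varepsilon$. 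Both terms therefore contribute $\lesssim \delta^\varepsilon$, giving the claim.

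There is no real obstacle here; the entire content of the lemma is the bookkeeping justifying the choice $\kappa=\alpha/(1-\alpha)$. The only thing worth flagging is that the two terms in \lemref{lem:coveringgood} encode competing behaviours: the first is the cost of finitely many ``Marstrand-slicing-heavy'' cubes $Q\in\DD^=_{\Delta,T}$ at scale $\Delta$, while the second is the cost of the ``$\theta$-above'' cubes $Q\in\DD^>_{\Delta,T}$, whose number is bounded via non-heaviness of $T$ but each of which contributes a slicing bound with the improved exponent. Balancing the two, via $\kappa(1-\alpha)=\alpha$, makes both match the target exponent $s-\tau$ modulo $\varepsilon$-losses, which is exactly why $\alpha=1-\sqrt{6}/3$ will emerge later as the optimal choice once the constraint $2\kappa+3\alpha\le 1$ from \lemref{lem:badest} is imposed.
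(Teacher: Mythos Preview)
Your proof is correct and follows exactly the paper's approach: sum the per-tube bound from \lemref{lem:coveringgood} over the $\sim\delta^{1-d}$ tubes in $\T_\delta$, multiply by $\delta^{s-\tau}$, and use $\kappa(1-\alpha)=\alpha$ to reduce both resulting exponents to $\varepsilon$ and $2\varepsilon$. One harmless slip in your closing commentary: you swapped the attributions of the two terms --- in \lemref{lem:coveringgood} the term $\delta^{(\alpha-1)(s-d+1)-3\varepsilon}$ comes from the $\theta$-above cubes $\DD^>_{\Delta,T}$, while $\delta^{(\kappa-1-\kappa\alpha)(s-d+1)-4\varepsilon}$ comes from $\DD^=_{\Delta,T}$; this does not affect the argument.
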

 \begin{proof}
 	Using the estimate from \lemref{lem:coveringgood} we get that
 	\begin{multline*}
 	\HH_\infty^{s-\tau}(\vis_\theta(E)\cap E_G)\le \sum_{T\in\mathcal{T}_\delta} N(\vis_\theta(E)\cap E_G\cap T,\delta)\cdot\delta^{s-\tau} \\
 	\lesssim \delta^{-d+1}\cdot ( \delta^{(\kappa - 1-\kappa\alpha)(s-d+1)-4\varepsilon}  + \delta^{(\alpha -1)(s-d+1)-3\varepsilon})\cdot\delta^{s-\tau}\\
 	= \delta^{(\kappa -\kappa\alpha-\alpha)(s-d+1)+\varepsilon}  + \delta^{ 2\varepsilon}
 	\end{multline*}
 	Taking $\kappa=\alpha/(1-\alpha),$ which satisfies $\kappa\in (0,1)$ for $\alpha\in (0,1/2)$, we obtain the desired inequality.
 \end{proof}
 
 \subsection{Conclusion of the proof of \thmref{thm:main}}
 Putting together the estimates from \lemref{lem:lightpart}, \lemref{lem:heavyest}, \lemref{lem:badest}, and \lemref{lem:goodpartest}, we get that
 \begin{equation*}
 \int_{\mathbb{S}^{d-1}} \HH_\infty^{s-\tau}(\vis_\theta(E))\, d\theta \lesssim_{\varepsilon} \delta^{\varepsilon}
 \end{equation*}
 as long as $2\kappa+3\alpha\le 1,$ $\kappa=\alpha/(1-\alpha)$, and $\alpha\in (0,1/2)$.
 
 Plugging $\kappa=\alpha/(1-\alpha)$ into $2\kappa+3\alpha\le 1$ we arrive at
 \begin{equation*}
 -3\alpha^2+6\alpha-1\le 0,
 \end{equation*} 
 and the largest $\alpha\in(0,1/2)$ which satisfies this inequality is $\alpha=1-\sqrt{6}/3$. This gives \propref{prop:main}. Taking $\delta\to 0$, and then $\varepsilon\to 0$, \thmref{thm:main} follows.
 
 \section{General compact sets}
 In this section we prove \thmref{thm:main2}. Since the proof follows quite closely that of \cite{orponen2022visible}, with the major changes already present in our proof of \thmref{thm:main}, we will be brief. 
 
 Let $E\subset \R^d$ be compact. By rescaling, we may assume that $E\subset [0,1)^d$.	
 Let $\varepsilon>0$ be a small constant, and set
 \begin{equation*}
 \tau \coloneqq \frac{1}{6}-5\varepsilon.
 \end{equation*}
 Our goal is to prove that
 \begin{equation*}
 \HH_\infty^{d-\tau}(\vis_\theta(E))=0\quad\text{for a.e. $\theta\in\TT$}.
 \end{equation*}
 By taking $\varepsilon\to 0$, \eqref{eq:dimest2} will follow.
 \subsection{Preliminaries}
 Fix a small dyadic scale $\delta\in 2^{-\mathbb{N}}$, and let $\Delta\in 2^{-\mathbb{N}}$ with $\Delta\sim \delta^{\kappa}$, where $\kappa=1/6$, so that $\Delta\gg\delta$. 
 We will show that 
 \begin{equation*}
 \int_{\mathbb{S}^{d-1}} \HH_\infty^{d-\tau}(\vis_\theta(E))\, d\theta \lesssim_{\varepsilon} \delta^{\varepsilon}.
 \end{equation*}
 
 As before, we set $\DD = \{Q\in\mathbb{D} : Q\cap E\neq\varnothing\},$
 $\DD_\delta = \{Q\in\DD : \ell(Q)=\delta\},$ and
 $\DD_\Delta = \{Q\in\DD : \ell(Q)=\Delta\}.$ 
 
 Let $\mu$ be the Frostman measure on $E$ given by \lemref{lem:Frostman}, with exponent $t=d-\tau$, so that
 \begin{equation}\label{eq:Frostupper}
 \mu(B(x,r))\lesssim r^{d-\tau}
 \end{equation}
 and for all $Q\in\DD_\delta$
 \begin{equation}\label{eq:Frostlower}
 \mu(\overline{Q})\gtrsim\min(\HH_\infty^{d-\tau}(E\cap Q),\ell(Q)^d).
 \end{equation}
 For every $Q\in\DD_{\Delta}$ we define $\mu_Q=\mu|_Q$. Let $\sigma=(1-\tau-\varepsilon)/2$. By \lemref{lem:sobolevest} we have
 \begin{equation}\label{eq:sobolevestmu2}
 \int_{\mathbb{S}^{d-1}}\|\mu_\theta\|^2_{H^\sigma(\R^{d-1})}\, d\theta\lesssim_{\varepsilon} 1,
 \end{equation}
 and for every $Q\in\DD_\Delta$
 \begin{equation}\label{eq:sobolevestmuQ2}
 \int_{\mathbb{S}^{d-1}}\|\mu_{Q,\theta}\|^2_{H^\sigma(\R^{d-1})}\, d\theta\lesssim_{\varepsilon} \mu(Q).
 \end{equation}
 \subsection{Light part} 
 
 We say that a cube $Q\in\DD_\delta$ is light, denoted by $Q\in\DD_{\delta,L}$, if
 \begin{equation*}
 \mu(Q)\le\ell(Q)^{d+\varepsilon}=\delta^{d+\varepsilon}.
 \end{equation*}
 Note that, by \eqref{eq:Frostlower}, this implies that
 \begin{equation}\label{eq:contentsmall}
 \HH^{d-\tau}_\infty(E\cap Q)\le\mu(\overline{Q})=\mu(Q)\le \delta^{d+\varepsilon},
 \end{equation}
 where the equality in the middle holds because by \eqref{eq:Frostupper} $\mu$ does not charge $\partial Q$, which is $(d-1)$-dimensional.
 We set
 \begin{equation*}
 E_0 = E\setminus \bigcup_{Q\in\DD_{\delta,L}}Q,
 \end{equation*}
 and 
 \begin{equation*}
 \cQ_\delta \coloneqq \DD_{\delta}\setminus\DD_{\delta,L},
 \end{equation*}
 so that $\cQ_\delta$ is the collection of all dyadic $\delta$-cubes intersecting $E_0$. Given $Q\in\DD_{\Delta}$, we set $\cQ_\delta(Q)=\{P\in\cQ_\delta\ :\ P\subset Q \}$.
 
 Consider families of tubes $\T_{\delta,\theta}$ as in Subsection \ref{sec:params}. We will say that a tube $T\in\T_{\delta,\theta}$ is {light} if
 \begin{equation*}
 \#\{P\in\cQ_\delta\ :\ P\cap T\neq\varnothing \}\le \delta^{-1+\tau+\varepsilon}.
 \end{equation*}
 We denote the family of light tubes by $\T_{L,\theta}$. When the direction $\theta$ is clear from context, we will just write $\T_L$.
 
 We set
 \begin{equation*}
 E_{L,\theta} = \bigg(E_0\cap\bigcup_{T\in\T_{L,\theta}}T\bigg)\cup\bigcup_{Q\in\DD_{\delta,L}}Q\cap E.
 \end{equation*}
 \begin{lemma}\label{lem:lightpart2}
 	We have 
 	\begin{equation*}
 	\HH^{d-\tau}_\infty(E_{L,\theta})\lesssim \delta^{\varepsilon}.
 	\end{equation*}
 \end{lemma}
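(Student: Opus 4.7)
The proof splits naturally along the definition $E_{L,\theta} = (E_0 \cap \bigcup_{T \in \T_{L,\theta}} T) \cup \bigcup_{Q \in \DD_{\delta,L}} Q \cap E$, so the plan is to bound the $(d-\tau)$-content of each piece by $\delta^\varepsilon$ and then add.

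For the union over light cubes $Q \in \DD_{\delta,L}$, I would use subadditivity of Hausdorff content together with the bound $\HH^{d-\tau}_\infty(E \cap Q) \le \delta^{d+\varepsilon}$ coming from \eqref{eq:contentsmall}. Since $E \subset [0,1)^d$ and $\#\DD_{\delta,L} \le \#\DD_\delta \lesssim \delta^{-d}$, summing gives
\[
\HH^{d-\tau}_\infty\Big(\bigcup_{Q \in \DD_{\delta,L}} Q \cap E\Big) \lesssim \delta^{-d} \cdot \delta^{d+\varepsilon} = \delta^\varepsilon.
\]

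For the union over light tubes, I would observe that $E_0 \subset \bigcup_{P \in \cQ_\delta} P$, so $E_0 \cap T$ is covered by those $P \in \cQ_\delta$ that meet $T$, each contributing $\delta^{d-\tau}$ to the content. By the definition of a light tube each tube contributes at most $\delta^{-1+\tau+\varepsilon}$ cubes, and there are $\#\T_{\delta,\theta} \lesssim \delta^{-(d-1)}$ tubes overall, so
\[
\HH^{d-\tau}_\infty\Big(E_0 \cap \bigcup_{T \in \T_{L,\theta}} T\Big) \lesssim \sum_{T \in \T_{L,\theta}} \delta^{-1+\tau+\varepsilon} \cdot \delta^{d-\tau} \lesssim \delta^{-(d-1)} \cdot \delta^{d-1+\varepsilon} = \delta^\varepsilon.
\]
Summing the two estimates yields the lemma.

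There is no real obstacle here: the bookkeeping has been front-loaded into the definitions of light cubes (where the Frostman lower bound \eqref{eq:Frostlower} gets converted into an $\HH^{d-\tau}_\infty$ upper bound, already recorded in \eqref{eq:contentsmall}) and light tubes (where ``few heavy $\delta$-cubes per tube'' is built into the definition). The only micro-check worth doing carefully is that $\mu$ does not charge the boundaries of the dyadic $\delta$-cubes, which is exactly what \eqref{eq:Frostupper} guarantees and is used inside \eqref{eq:contentsmall}; with that in hand the two estimates above are just dimension counting.
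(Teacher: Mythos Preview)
Your proof is correct and follows essentially the same approach as the paper: split into the light-cubes piece and the light-tubes piece, use \eqref{eq:contentsmall} together with $\#\DD_{\delta,L}\lesssim\delta^{-d}$ for the first, and cover $E_0\cap T$ by the at most $\delta^{-1+\tau+\varepsilon}$ cubes of $\cQ_\delta$ meeting $T$ for the second, then sum over the $\lesssim\delta^{1-d}$ tubes. The paper phrases the second step via the covering number $N(E_0\cap\bigcup_{T\in\T_L}T,\delta)$ before multiplying by $\delta^{d-\tau}$, but the arithmetic is identical to yours.
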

 \begin{proof}
 	We estimate the content of light cubes and light tubes separately.
 	Regarding the light cubes, we have
 	\begin{equation*}
 	\HH^{d-\tau}_\infty\big(\bigcup_{Q\in\DD_{\delta,L}}Q\cap E\big)\le\sum_{Q\in\DD_{\delta,L}} \HH^{d-\tau}_\infty(Q\cap E)\overset{\eqref{eq:contentsmall}}{\le}\delta^{-d}\cdot \delta^{d+\varepsilon}=\delta^{\varepsilon}.
 	\end{equation*}
 	
 	Concerning tubes, observe that
 	\begin{equation*}
 	N\big(E_0\cap\bigcup_{T\in\T_L}T,\delta\big) \le \sum_{T\in\mathcal{T}_L}N(T\cap E_0,\delta)\lesssim \delta^{1-d}\cdot \delta^{-1+\tau+\varepsilon}=\delta^{-d+\tau+\varepsilon},
 	\end{equation*}
 	which gives
 	\begin{equation*}
 	\HH_\infty^{d-\tau}\big(E_0\cap\bigcup_{T\in\T_L}T\big)\le N\big(E\cap\bigcup_{T\in\T_L}T,\delta\big)\cdot \delta^{d-\tau}\lesssim \delta^{\varepsilon}.
 	\end{equation*}
 \end{proof}
 \subsection{Bad part}
 Fix a direction $\theta\in\mathbb{S}^{d-1}$. Let $Q\in\DD_\Delta$. If a tube $T\in\T_{\delta}$ satisfies
 \begin{equation}\label{eq:defsubstinter2}
 \#\{P\in\cQ_\delta(Q)\ :\ P\cap T\neq\varnothing \}\ge \delta^{\tau+\kappa-1+ 2\varepsilon},
 \end{equation}
 we will write $T\in\T(Q)$.
 \begin{lemma}\label{lem:normaltubes2}
 	Let $T\in\T_{\delta}\setminus\T_L$. Then, there exists $Q\in\DD_{\Delta}$ such that $T\in\T(Q)$.
 \end{lemma}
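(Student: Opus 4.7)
The plan is a direct pigeonhole argument, closely mirroring the proof of \lemref{lem:normaltubes} but simpler, since no ``heavy tubes'' are defined in this setting. The argument splits into two steps: an elementary geometric bound on the number of $\Delta$-cubes a single $\delta$-tube can meet, followed by pigeonholing the non-light $\delta$-cubes intersecting $T$ among those $\Delta$-parents.

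First, I would record the geometric bound
\[
\#\DD_{\Delta,T}\coloneqq\#\{Q\in\DD_\Delta\ :\ Q\cap T\neq\varnothing\}\lesssim \Delta^{-1}\sim \delta^{-\kappa}.
\]
This is valid for \emph{any} tube $T\in\T_\delta$: the tube has length $\lesssim 1$ along $\theta$ and width $\delta\ll \Delta$, so as one traverses its core one passes through a sequence of at most $O(\Delta^{-1})$ axis-aligned $\Delta$-cubes, and at each position only a bounded number of transverse neighbors can be hit because $\delta<\Delta$. Crucially, this bound needs no information about $\mu$, $E$, or $\theta$; it is purely geometric.

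Second, since $T\notin \T_L$, the definition of light tubes gives
\[
\#\{P\in\cQ_\delta\ :\ P\cap T\neq\varnothing\}> \delta^{-1+\tau+\varepsilon}.
\]
Every such $P$ is contained in a unique $\Delta$-parent $Q\in\DD_\Delta$, and that parent must lie in $\DD_{\Delta,T}$. Pigeonholing over the at most $O(\delta^{-\kappa})$ admissible parents yields some $Q\in\DD_\Delta$ with
\[
\#\{P\in\cQ_\delta(Q)\ :\ P\cap T\neq\varnothing\}\gtrsim \delta^{-1+\tau+\varepsilon}\cdot\delta^{\kappa}=\delta^{\tau+\kappa-1+\varepsilon}.
\]
For $\delta$ small enough depending on $\varepsilon$ (to absorb the implicit constant into one power of $\delta^{\varepsilon}$), this quantity is at least $\delta^{\tau+\kappa-1+2\varepsilon}$, which is precisely the threshold \eqref{eq:defsubstinter2} required for $T\in\T(Q)$.

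There is no real obstacle: the whole argument is counting, and the only point to verify is the geometric bound on $\#\DD_{\Delta,T}$, which is routine. The structural role played in \lemref{lem:normaltubes} by the ``not heavy'' hypothesis, namely supplying a bound on how many $\Delta$-cubes a tube can meet, is taken over here, trivially and without appealing to any regularity of $\mu$, by the mere fact that a thin $\delta$-tube fits inside $O(\Delta^{-1})$ fat $\Delta$-cubes.
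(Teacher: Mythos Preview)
Your proof is correct and follows essentially the same approach as the paper: the trivial geometric bound $\#\{Q\in\DD_\Delta:Q\cap T\neq\varnothing\}\lesssim\Delta^{-1}\sim\delta^{-\kappa}$, the non-light hypothesis giving $\#\{P\in\cQ_\delta:P\cap T\neq\varnothing\}>\delta^{-1+\tau+\varepsilon}$, and a pigeonhole to find the desired $Q$. Your commentary on why no ``heavy'' hypothesis is needed here is accurate and matches the structure of the argument in the paper.
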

 \begin{proof}
 	Observe that, trivially, 
 	\begin{equation*}
 	\#\{Q\in\DD_\Delta\ :\ Q\cap T\neq\varnothing \}\lesssim \Delta^{-1}\sim \delta^{-\kappa}.
 	\end{equation*}
 	At the same time, since $T\notin\T_{L}$, we have $\#\{P\in\cQ_\delta\ :\ P\cap T\neq\varnothing \}> \delta^{-1+\tau+\varepsilon}.$ By pigeonholing, we get that for some $Q\in\DD_{\Delta}$
 	\begin{equation*}
 	\#\{P\in\cQ_\delta(Q)\ :\ P\cap T\neq\varnothing \}\gtrsim \frac{\#\{P\in\cQ_\delta\ :\ P\cap T\neq\varnothing \}}{\#\{Q'\in\DD_\Delta\ :\ Q'\cap T\neq\varnothing \}}\gtrsim \delta^{\kappa+\tau-1+\varepsilon}.
 	\end{equation*}
 \end{proof}
 Let $\mathcal{L}$ denote the set of affine lines parallel to $\theta$. We will write $\ell\in\cL_{Q,B}$ if the $\delta$-tube $T\in\T_{\delta}$ with $\ell\subset T$ satisfies $T\in\T(Q)$, and at the same time
 \begin{equation}\label{eq:defbadline2}
 \ell\cap E\cap \overline{Q}=\varnothing.
 \end{equation}
 We set	
 \begin{equation*}
 L_{Q,B} = \bigcup_{\ell\in\cL_{Q,B}} \ell,\quad L_B=\bigcup_{Q\in\DD_{\Delta}}L_{Q,B}.
 \end{equation*}
 We define the bad part of $E$ as
 \begin{equation*}
 E_B \coloneqq E\cap L_B.
 \end{equation*}
 \begin{lemma}\label{lem:1}
 	For every $Q\in\DD_{\Delta}$ we have
 	\begin{equation*}
 	\HH_\infty^{d-1-\tau}(\pi_\theta(L_{Q,B}))\lesssim \delta^{\varepsilon}\|\mu_{Q,\theta}\|_{H^\sigma(\R^{d-1})}^2.
 	\end{equation*}
 \end{lemma}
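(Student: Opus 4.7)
The plan is to follow the same template as the proof of Lemma~\ref{lem:contentprojection}, with the main modification being how one obtains the lower bound for $\mu_Q(\ell(C\delta))$, where $\ell\in\cL_{Q,B}$. In the Ahlfors regular setting this was a direct consequence of the regularity of $\mu$; here instead I would use the non-lightness of cubes in $\cQ_\delta(Q)$, which encodes a Frostman-type lower bound on $\mu$ at scale $\delta$.

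First I would assume $H:=\HH_\infty^{d-1-\tau}(\pi_\theta(L_{Q,B}))>0$ and apply Frostman's lemma to get a probability measure $\nu$ supported on $\pi_\theta(L_{Q,B})$ with $\nu(B(x,r))\lesssim H^{-1}r^{d-1-\tau}$. The definition of bad lines \eqref{eq:defbadline2} gives $\supp\nu\cap\supp\mu_{Q,\theta}=\varnothing$, so that for a smooth radial mollifier $\varphi_\eta$ with $\eta$ small enough,
\begin{equation*}
0=\int\mu_{Q,\theta}\ast\varphi_\eta\,d\nu=I_1-I_2,
\end{equation*}
where $I_1$ and $I_2$ are the pieces obtained by splitting via the frequency cutoff $\hat\varphi(C\delta\xi)$, exactly as in Lemma~\ref{lem:contentprojection}.

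For the lower bound on $I_1$, I would fix $x\in\supp\nu$, let $\ell=\pi_\theta^{-1}(x)$, and let $T$ be the $\delta$-tube containing $\ell$. Then $T\in\T(Q)$, so by \eqref{eq:defsubstinter2} there are at least $\delta^{\tau+\kappa-1+2\varepsilon}$ cubes $P\in\cQ_\delta(Q)$ meeting $T$. Since no such $P$ is light, each satisfies $\mu(P)>\delta^{d+\varepsilon}$, giving
\begin{equation*}
\mu_Q(\ell(C\delta))\gtrsim \delta^{\tau+\kappa-1+2\varepsilon}\cdot \delta^{d+\varepsilon}=\delta^{d+\tau+\kappa-1+3\varepsilon}
\end{equation*}
for a suitable $C\sim 1$. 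Translating to a pointwise estimate on $\mu_{Q,\theta}\ast\varphi_{C\delta}\ast\varphi_\eta(x)$ as in the earlier proof yields $I_1\gtrsim \delta^{\tau+\kappa+3\varepsilon}$ after integrating against the probability measure $\nu$.

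For $I_2$ I would use $|1-\hat\varphi(C\delta\xi)|\lesssim \delta^a|\xi|^a$ for any $a\in[0,1]$ together with Cauchy--Schwarz, splitting the Sobolev weight as in Lemma~\ref{lem:contentprojection}: the $\mu_{Q,\theta}$-factor gives $\|\mu_{Q,\theta}\|_{\dot H^\sigma(\R^{d-1})}$ with $\sigma=(1-\tau-\varepsilon)/2$, and the $\nu$-factor is controlled by the Riesz energy $I_{d-1-\tau-\varepsilon}(\nu)\lesssim_\varepsilon H^{-1}$ provided $2a-1+\tau+\varepsilon\le -\tau-\varepsilon$. Picking $a=1/2-\tau-\varepsilon$ (the largest admissible value) and equating $I_1=I_2$ gives
\begin{equation*}
H\lesssim_\varepsilon \delta^{1-4\tau-2\kappa-8\varepsilon}\,\|\mu_{Q,\theta}\|_{H^\sigma(\R^{d-1})}^2.
\end{equation*}
Plugging in $\kappa=1/6$ and $\tau=1/6-5\varepsilon$, the exponent simplifies to $12\varepsilon\ge\varepsilon$, which finishes the proof. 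The only delicate point is a careful accounting of the exponents, and the crucial structural change compared to Lemma~\ref{lem:contentprojection} is replacing Ahlfors regularity with the non-lightness condition $\mu(P)>\delta^{d+\varepsilon}$ as the source of the pointwise lower bound feeding into $I_1$.
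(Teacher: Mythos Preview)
Your proposal is correct and matches the paper's proof essentially line for line: the same Frostman measure $\nu$, the same frequency splitting $I_1=I_2$, the same choice $a=1/2-\tau-\varepsilon$, and the same lower bound $I_1\gtrsim\delta^{\tau+\kappa+3\varepsilon}$ obtained by replacing Ahlfors regularity with the non-lightness condition $\mu(P)>\delta^{d+\varepsilon}$ for $P\in\cQ_\delta(Q)$. The exponent bookkeeping you wrote, $1-4\tau-2\kappa-8\varepsilon=12\varepsilon$, is exactly the paper's computation.
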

 \begin{proof}
 	Arguing exactly the same as in the beginning of the proof of \lemref{lem:contentprojection}, we arrive at the inequality
 	\begin{multline}
 	I_1=\int \mu_{Q,\theta}\ast\varphi_{C\delta}\ast\varphi_{\eta}\ d\nu = I_2\\
 	\lesssim \delta^{a}\left(\int_{\R^{d-1}} |\widehat{\mu_{Q,\theta}}(\xi)|^{2}|\xi|^{1-\tau-\varepsilon}\ d\xi  \right)^{1/2}\left(\int_{\R^{d-1}} |\widehat{\nu}(\xi)|^{2}|\xi|^{(2a - 1 + \tau +(d-1)+\varepsilon) - (d-1)}\ d\xi  \right)^{1/2}.
 	\end{multline}
 	Recalling that $\sigma=(1-\tau-\varepsilon)/2$, and noting that for $a=1/2-\tau-\varepsilon$ we have
 	\begin{equation*}
 	\int_{\R^{d-1}} |\widehat{\nu}(\xi)|^{2}|\xi|^{(2a - 1 + \tau +(d-1)+\varepsilon) - (d-1)}\ d\xi\lesssim_{\varepsilon} \HH_\infty^{d-1-\tau}(\pi_\theta(L_{Q,B}))^{-1},
 	\end{equation*}
 	we get that
 	\begin{equation*}
 	I_2\lesssim_{\varepsilon}\delta^{1/2-\tau-\varepsilon}\HH_\infty^{d-1-\tau}(\pi_\theta(L_{Q,B}))^{-1/2}\|\mu_{Q,\theta}\|_{H^\sigma(\R^{d-1})}.
 	\end{equation*}
 	On the other hand, it follows from the definition of bad lines $\cL_{Q,B}$ that for each $\ell\in\cL_{Q,B}$
 	\begin{equation*}
 	\mu_Q(\ell(C\delta))\gtrsim \delta^{\tau+\kappa-1+2\varepsilon}\cdot \delta^{d+\varepsilon},
 	\end{equation*}
 	where we also used that $\mu_Q(P)\ge\delta^{d+\varepsilon}$ for all $P\in\cQ_\delta(Q)$. Hence, for each $x\in \pi_\theta(L_{Q,B})$
 	\begin{equation*}
 	\mu_{Q,\theta}\ast\varphi_{C\delta}\ast\varphi_{\eta}(x)\gtrsim \delta^{\tau+\kappa+3\varepsilon}.
 	\end{equation*}
 	Since $\nu$ is a probability measure, we get $I_1\gtrsim \delta^{\tau+\kappa+3\varepsilon}$. Comparing this with the estimate for $I_2$, we arrive at
 	\begin{equation*}
 	\HH_\infty^{d-1-\tau}(\pi_\theta(L_{Q,B}))\lesssim_{\varepsilon} \delta^{1-2\tau-2\varepsilon}\cdot\delta^{-2\tau-2\kappa-6\varepsilon}\|\mu_{Q,\theta}\|_{H^\sigma(\R^{d-1})}^2.
 	\end{equation*}
 	Recalling that $\kappa=1/6$ and $\tau = 1/6-5\varepsilon$, we get
 	\begin{equation*}
 	\HH_\infty^{d-1-\tau}(\pi_\theta(L_{Q,B}))\lesssim_{\varepsilon} \delta^{12\varepsilon}\|\mu_{Q,\theta}\|_{H^\sigma(\R^{d-1})}^2.
 	\end{equation*}
 \end{proof}
 \begin{lemma}\label{lem:estbad}
 	We have 
 	\begin{equation}\label{eq:badestimate3}
 	\int_{\mathbb{S}^{d-1}}\HH^{d-\tau}_\infty(E_{B,\theta})\ d\theta\lesssim_\varepsilon\delta^{\varepsilon}.
 	\end{equation}
 \end{lemma}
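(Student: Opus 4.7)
The proof will essentially transcribe the bad-part argument of Lemma~\ref{lem:badest} to the present setting, with the refined ``not heavy'' tube-covering bound replaced by the trivial estimate $N(T\cap[0,1]^d,\ell(T))\lesssim \ell(T)^{-1}$. The main ingredient, Lemma~\ref{lem:1}, has already been calibrated so that the specific choices $\kappa=1/6$ and $\tau=1/6-5\varepsilon$ leave a gain of $\delta^{12\varepsilon}$, which is more than enough room to absorb both the trivial tube bound and the summation over $Q\in\DD_\Delta$.

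Fix $\theta\in\mathbb{S}^{d-1}$. First I would observe that
$$E_{B,\theta}\subset\pi_\theta^{-1}(\pi_\theta(L_B)),\qquad \pi_\theta(L_B)=\bigcup_{Q\in\DD_\Delta}\pi_\theta(L_{Q,B}),$$
and select a near-optimal dyadic cover $\cP\subset\mathbb{D}_{\R^{d-1}}$ of $\pi_\theta(L_B)$ with cubes of sidelength at least $\delta$ satisfying
$$\sum_{P\in\cP}\ell(P)^{d-1-\tau}\lesssim \HH_\infty^{d-1-\tau}(\pi_\theta(L_B))\le\sum_{Q\in\DD_\Delta}\HH_\infty^{d-1-\tau}(\pi_\theta(L_{Q,B})).$$
For each $P\in\cP$, write $T_P=\pi_\theta^{-1}(P)$; since $E\subset[0,1)^d$ and $T_P$ is a tube of width $\ell(P)$ parallel to $\theta$, the set $T_P\cap[0,1)^d$ can be covered by $\lesssim \ell(P)^{-1}$ balls of radius $\ell(P)$. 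Because $E_{B,\theta}\subset\bigcup_{P\in\cP}T_P$, I would then conclude
$$\HH_\infty^{d-\tau}(E_{B,\theta})\lesssim \sum_{P\in\cP}\ell(P)^{-1}\cdot \ell(P)^{d-\tau}=\sum_{P\in\cP}\ell(P)^{d-1-\tau}.$$

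Combining the two displays above with Lemma~\ref{lem:1} applied to each $Q\in\DD_\Delta$ gives
$$\HH_\infty^{d-\tau}(E_{B,\theta})\lesssim_\varepsilon \delta^{\varepsilon}\sum_{Q\in\DD_\Delta}\|\mu_{Q,\theta}\|_{H^\sigma(\R^{d-1})}^2.$$
Integrating in $\theta\in\mathbb{S}^{d-1}$, invoking the Sobolev bound \eqref{eq:sobolevestmuQ2}, and using $\sum_{Q\in\DD_\Delta}\mu(Q)\le\mu(\R^d)\lesssim 1$ yields \eqref{eq:badestimate3}. I do not foresee any genuine obstacle here: all the work has been shunted upstream into Lemma~\ref{lem:1}, whose Fourier-analytic argument is what truly controls the projections of bad lines, and the bookkeeping above is essentially the same as in the Ahlfors-regular case (Lemma~\ref{lem:badest}) --- only simpler, since no heavy-tube decomposition is needed.
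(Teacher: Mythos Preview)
Your proof is correct and follows essentially the same approach as the paper: reduce $\HH^{d-\tau}_\infty(E_{B,\theta})$ to $\HH^{d-1-\tau}_\infty(\pi_\theta(L_B))$ via the trivial tube bound $N(T_P\cap[0,1]^d,\ell(P))\lesssim\ell(P)^{-1}$, then apply Lemma~\ref{lem:1} cube by cube and integrate using \eqref{eq:sobolevestmuQ2}. The restriction to cubes of sidelength at least $\delta$ in your cover $\cP$ is unnecessary (the trivial tube bound holds at every scale) and you should simply drop it, but otherwise the argument matches the paper's.
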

 \begin{proof}
 	First, observe that
 	\begin{equation*}
 	\HH^{d-\tau}_\infty(E_{B,\theta})\lesssim \HH^{d-1-\tau}_\infty(\pi_\theta(E_{B,\theta})).
 	\end{equation*}
 	Indeed, if $\cB$ is a nearly optimal covering of $\pi_\theta(E_{B,\theta})$ by cubes, then we can cover $E_{B,\theta}$ by a family of tubes $\mathbf{T}=\{T_Q : T_Q\coloneqq \pi_\theta^{-1}(Q)\cap [0,1]^d, Q\in\cB\}$ (recall that $E\subset [0,1)^d).$ At the same time, each $T_Q$ can be covered by $\lesssim \ell(Q)^{-1}$ many cubes of sidelength $\ell(Q)$. Hence,
 	\begin{equation*}
 	\HH^{d-\tau}_\infty(E_{B,\theta})\lesssim \sum_{Q\in\cB} \ell(Q)^{-1}\cdot \ell(Q)^{d-\tau}\lesssim \HH^{d-1-\tau}_\infty(\pi_\theta(E_{B,\theta})).
 	\end{equation*}
 	
 	Since $\pi_\theta(E_{B,\theta}) \subset \bigcup_{Q\in\DD_{\Delta}}\pi_\theta(L_{Q,B}) $, we have
 	\begin{equation*}
 	\HH^{d-1-\tau}_\infty(\pi_\theta(E_{B,\theta}))\le  \sum_{Q\in\DD_{\Delta}}\HH_\infty^{d-1-\tau}(\pi_\theta(L_{Q,B})),
 	\end{equation*}
 	and so by \lemref{lem:1}
 	\begin{multline*}
 	\int_{\mathbb{S}^{d-1}}\HH^{d-\tau}_\infty(E_{B,\theta})\ d\theta \lesssim \int_{\mathbb{S}^{d-1}}\HH^{d-1-\tau}_\infty(\pi_\theta(E_{B,\theta}))\ d\theta\\
 	\lesssim_\varepsilon \delta^{\varepsilon}\sum_{Q\in\DD_{\Delta}}\int_{\mathbb{S}^{d-1}} \|\mu_{Q,\theta}\|_{H^\sigma(\R^{d-1})}^2\, d\theta \overset{\eqref{eq:sobolevestmuQ2}}{\lesssim_{\varepsilon}} \delta^{\varepsilon}\sum_{Q\in\DD_{\Delta}}\mu(Q)\sim\delta^{\varepsilon}.
 	\end{multline*}
 \end{proof}
 \subsection{Good part}
 Fix $\theta\in\mathbb{S}^{d-1}$. We define the good part of $E$ as
 \begin{equation*}
 E_G \coloneqq E\setminus (E_B\cup E_L).
 \end{equation*}
 Recall that  $\T_{\delta}$ is a family of $\delta$-tubes parallel to $\theta$.
 \begin{lemma}\label{lem:cov2}
 	For any $T\in\T_\delta$ we have
 	\begin{equation*}
 	N(\vis_\theta(E)\cap E_G\cap T,\delta)\lesssim \delta^{\tau-1+2\varepsilon} + \delta^{\kappa-1}.
 	\end{equation*}
 \end{lemma}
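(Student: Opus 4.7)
The plan is to mimic the proof of Lemma~\ref{lem:coveringgood} almost verbatim, replacing the Ahlfors-regular slice counts by the dyadic cube counts $\#\{P\in\cQ_\delta(Q) : P\cap T\neq\varnothing\}$, and using Lemma~\ref{lem:normaltubes2} in place of Lemma~\ref{lem:normaltubes}. If $T\cap E_G=\varnothing$ the bound is trivial, so assume $T\cap E_G\neq\varnothing$. Since $E_G\cap E_L=\varnothing$ we have $T\notin\T_L$, so by Lemma~\ref{lem:normaltubes2} the collection $\DD_{\Delta,T}\coloneqq\{Q\in\DD_\Delta : Q\cap T\neq\varnothing\}$ contains at least one cube with $T\in\T(Q)$. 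Let $Q_T$ be the $\theta$-highest such cube (maximizing $\inf\{x\cdot\theta : x\in\overline{Q_T}\}$), and partition $\DD_{\Delta,T}=\DD^<_{\Delta,T}\cup\DD^=_{\Delta,T}\cup\DD^>_{\Delta,T}$ exactly as in \eqref{eq:13} (but with $3Q$ replaced by $\overline{Q}$, matching \eqref{eq:defbadline2}). Clearly $\#\DD^=_{\Delta,T}\lesssim 1$ and $\#\DD^>_{\Delta,T}\le\#\DD_{\Delta,T}\lesssim\delta^{-\kappa}$.

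For $Q\in\DD^<_{\Delta,T}$ the claim is that $\vis_\theta(E)\cap E_G\cap T\cap Q=\varnothing$. Indeed, if $x$ lay in this intersection and $\ell\subset T$ were the line through $x$, then $x\in E_G$ would give $x\notin L_B$, hence $\ell\notin\cL_{Q_T,B}$; combined with $T\in\T(Q_T)$, the definition \eqref{eq:defbadline2} would force $\ell\cap E\cap\overline{Q_T}\neq\varnothing$, producing a point $y\in E\cap\ell$ with $y\cdot\theta>x\cdot\theta$, contradicting $x\in\vis_\theta(E)$.

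For $Q\in\DD^=_{\Delta,T}$ we use the trivial volumetric bound: a $\delta$-tube inside a $\Delta$-cube meets at most $\lesssim\Delta/\delta\sim\delta^{\kappa-1}$ cubes of $\cQ_\delta(Q)$, and since $E_G\subset E_0\subset\bigcup_{P\in\cQ_\delta}P$, we get $N(E_G\cap T\cap Q,\delta)\lesssim\delta^{\kappa-1}$. Summing over the $O(1)$ such cubes gives a contribution of $\delta^{\kappa-1}$. For $Q\in\DD^>_{\Delta,T}$ the maximality of $Q_T$ guarantees $T\notin\T(Q)$, i.e.\ by \eqref{eq:defsubstinter2}, $\#\{P\in\cQ_\delta(Q):P\cap T\neq\varnothing\}<\delta^{\tau+\kappa-1+2\varepsilon}$, and this bounds $N(E_G\cap T\cap Q,\delta)$ because every $\delta$-cube meeting $E_G$ lies in $\cQ_\delta$. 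Summing over the $\lesssim\delta^{-\kappa}$ such cubes contributes $\lesssim\delta^{\tau-1+2\varepsilon}$.

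Adding the three contributions yields $N(\vis_\theta(E)\cap E_G\cap T,\delta)\lesssim\delta^{\tau-1+2\varepsilon}+\delta^{\kappa-1}$, as required. The only mild subtlety is step for $\DD^<_{\Delta,T}$, where one must take care that ``$\theta$-highest'' refers to the half-line direction in the definition of $\vis_\theta$, so that the point $y$ produced on $\ell\cap E\cap\overline{Q_T}$ indeed blocks visibility of $x$; the rest of the argument is a routine bookkeeping exercise paralleling Lemma~\ref{lem:coveringgood}.
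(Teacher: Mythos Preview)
Your proof is correct and follows essentially the same approach as the paper's own argument: the same three-family decomposition $\DD^<_{\Delta,T}\cup\DD^=_{\Delta,T}\cup\DD^>_{\Delta,T}$ around the $\theta$-highest cube $Q_T$ with $T\in\T(Q_T)$, the same visibility contradiction for $\DD^<_{\Delta,T}$, the trivial bound $N(T\cap Q,\delta)\lesssim\delta^{\kappa-1}$ for $\DD^=_{\Delta,T}$, and the failure of \eqref{eq:defsubstinter2} for $\DD^>_{\Delta,T}$ combined with $E_G\subset E_0$. Your bookkeeping of $\#\DD^>_{\Delta,T}\lesssim\delta^{-\kappa}$ and the final summation match the paper exactly.
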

 \begin{proof}
 	Assume that $T\cap E_G\neq\varnothing$, otherwise there is nothing to prove. Since $E_G\cap E_L=\varnothing$, we get that $T\notin\T_L$. Hence, by \lemref{lem:normaltubes2}, there exists $Q\in\DD_{\Delta}$ such that $T\in\T(Q)$, so that
 	\begin{equation}\label{eq:defsubstinter3}
 	\#\{P\in\cQ_\delta(Q)\ :\ P\cap T\neq\varnothing \}\ge \delta^{\tau+\kappa-1+ 2\varepsilon}.
 	\end{equation}
 	Let $Q_T\in\DD_{\Delta,T}$ be the ``$\theta$-highest'' of all cubes $Q\in\DD_\Delta$ satisfying \eqref{eq:defsubstinter3}, in the sense that it maximizes $\inf\{x\cdot\theta : x\in Q\}$ among all such cubes. Consider families $\DD^<_{\Delta,T},\DD^=_{\Delta,T},\DD^>_{\Delta,T}$ defined as in \eqref{eq:13}, except with $3Q, 3Q_T$ replaced by $Q, Q_T$.
 	
 	For cubes $Q\in\DD_{\Delta,T}^<$, we argue in the same way as in the proof of \lemref{lem:coveringgood} that $N(\vis_\theta(E)\cap E_G\cap T\cap Q,\delta)=0$.
 	
 	For cubes $Q\in\DD_{\Delta,T}^=$ we use the trivial bound
 	\begin{equation*}
 	N(\vis_\theta(E)\cap E_G\cap T\cap Q,\delta)\le N(T\cap Q,\delta)\lesssim \delta^{\kappa-1}
 	\end{equation*}
 	
 	Finally, for cubes $Q\in\DD_{\Delta,T}^>$ the inequality \eqref{eq:defsubstinter3} fails (this follows from the definition of $Q_T$), so that
 	\begin{equation*}
 	N(\vis_\theta(E)\cap E_G\cap T\cap Q,\delta)\le N(E_0\cap T\cap Q,\delta)\lesssim\delta^{\tau+\kappa-1+ 2\varepsilon},
 	\end{equation*}
 	where we used the fact that $E_G\subset E_0$, since $E_G\cap E_L=\varnothing$.  
 	
 	Putting the three estimates together, and noting that $\#\DD_{\Delta,T}^=\lesssim 1$ and $\#\DD_{\Delta,T}^<\lesssim \Delta^{-1}\sim \delta^{-\kappa}$, we get
 	\begin{equation*}
 	N(\vis_\theta(E)\cap E_G\cap T,\delta)\lesssim \delta^{\kappa-1}+ \delta^{\tau-1+2\varepsilon}.
 	\end{equation*}
 \end{proof}
 \begin{lemma}\label{lem:estgood}
 	We have
 	\begin{equation}\label{eq:44}
 	\HH_\infty^{d-\tau}(\vis_\theta(E)\cap E_G)\lesssim\delta^\varepsilon.
 	\end{equation}
 \end{lemma}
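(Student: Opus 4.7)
The plan is to combine the per-tube covering bound from Lemma~\ref{lem:cov2} with the count of $\delta$-tubes, and simply verify that the parameters $\kappa = 1/6$ and $\tau = 1/6 - 5\varepsilon$ produce a power of $\delta$ at least $\delta^\varepsilon$.

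First I would cover $\vis_\theta(E) \cap E_G$ by the family $\{T \cap \vis_\theta(E) \cap E_G : T \in \T_\delta\}$, using the fact that the $\delta$-tubes in $\T_\delta$ cover $[0,1]^d$ and that there are $\#\T_\delta \lesssim \delta^{1-d}$ of them. From the definition of $\HH^{d-\tau}_\infty$ and the trivial bound $\HH^{d-\tau}_\infty(A) \le N(A,\delta) \cdot \delta^{d-\tau}$ for any bounded set $A$, this gives
\begin{equation*}
\HH_\infty^{d-\tau}(\vis_\theta(E)\cap E_G) \;\le\; \sum_{T\in\T_\delta} N(\vis_\theta(E)\cap E_G \cap T,\delta) \cdot \delta^{d-\tau}.
\end{equation*}

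Next I would plug in the estimate $N(\vis_\theta(E)\cap E_G \cap T,\delta) \lesssim \delta^{\tau-1+2\varepsilon} + \delta^{\kappa-1}$ from Lemma~\ref{lem:cov2}, obtaining
\begin{equation*}
\HH_\infty^{d-\tau}(\vis_\theta(E)\cap E_G) \;\lesssim\; \delta^{1-d} \cdot \bigl(\delta^{\tau-1+2\varepsilon} + \delta^{\kappa-1}\bigr) \cdot \delta^{d-\tau} \;=\; \delta^{2\varepsilon} + \delta^{\kappa-\tau}.
\end{equation*}

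Finally, recall that $\kappa = 1/6$ and $\tau = 1/6 - 5\varepsilon$, so $\kappa - \tau = 5\varepsilon$, and both terms on the right-hand side are bounded by $\delta^\varepsilon$, which gives \eqref{eq:44}. There is no real obstacle here: all the substantial work was already done in Lemma~\ref{lem:cov2}, and the final step is just bookkeeping to confirm that the choices of $\kappa$ and $\tau$ are compatible with the desired exponent. The only thing to double-check is that the trivial summation $\HH^{d-\tau}_\infty(\bigcup A_i) \le \sum N(A_i,\delta)\delta^{d-\tau}$ is legitimate, which follows since each $N(A_i,\delta)$ ball of radius $\delta$ has diameter $\sim \delta$ and so contributes $\sim \delta^{d-\tau}$ to the content.
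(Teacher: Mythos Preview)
Your proposal is correct and is essentially identical to the paper's own proof: both sum the per-tube bound from Lemma~\ref{lem:cov2} over the $\lesssim \delta^{1-d}$ tubes in $\T_\delta$, obtain $\delta^{2\varepsilon} + \delta^{\kappa-\tau}$, and then use $\kappa-\tau = 5\varepsilon$ to conclude.
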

 \begin{proof}
 	Using \lemref{lem:cov2} we get
 	\begin{multline*}
 	\HH_\infty^{d-\tau}(\vis_\theta(E)\cap E_G)\le \sum_{T\in\mathcal{T}_\delta} N(\vis_\theta(E)\cap E_G\cap T,\delta)\cdot\delta^{d-\tau} \\
 	\lesssim \delta^{-d+1}\cdot ( \delta^{\tau-1+2\varepsilon} + \delta^{\kappa-1})\cdot\delta^{d-\tau}
 	= \delta^{2\varepsilon}  + \delta^{ \kappa-\tau}.
 	\end{multline*}
 	Recalling that $\tau = 1/6-5\varepsilon = \kappa - 5\varepsilon$, we get \eqref{eq:44}.
 \end{proof}
 
 Putting together estimates from \lemref{lem:lightpart2}, \lemref{lem:estbad}, and \lemref{lem:estgood}, we get
 \begin{equation*}
 \int_{\mathbb{S}^{d-1}} \HH_\infty^{d-\tau}(\vis_\theta(E))\, d\theta \lesssim_\varepsilon \delta^{\varepsilon}.
 \end{equation*}
 Taking $\delta\to 0$, and then $\varepsilon\to 0$, \thmref{thm:main2} follows.


\section*{Acknowledgments} 
I am grateful to Tuomas Orponen for many helpful discussions, and to Esa Järvenpää for reading a draft of this paper and giving useful comments.


\newcommand{\etalchar}[1]{$^{#1}$}


\begin{dajauthors}
\begin{authorinfo}[damian]
  Damian D\k{a}browski\\
  Department of Mathematics and Statistics\\ 
  University of Jyv\"askyl\"a\\
  Jyv\"askyl\"a, Finland\\
  damian\imagedot{}m\imagedot{}dabrowski\imageat{}jyu\imagedot{}fi \\
  \url{https://www.damiandabrowski.eu/}
\end{authorinfo}
\end{dajauthors}

\end{document}